\documentclass[11pt]{amsart}
  \usepackage[ngerman,english]{babel}

  \usepackage[latin1]{inputenc}
  \usepackage[
  %bookmarks=true,
  %bookmarksopenlevel=section,
  %bookmarksnumbered=true,
  pdfstartpage=1,
  pdfauthor={Timo Essig},
  pdfsubject={Intersection Space Cohomology of Three-Strata Pseudomanifolds},
  pdfkeywords={Intersection Space Cohomology, Pseudomanifolds, Poincar\'e Duality, Differential Forms},
  breaklinks=true,
  colorlinks=true,
  linkcolor=black,
  anchorcolor=black,
  citecolor=black,
  filecolor=black,
  menucolor=black,
  pagecolor=black,
  urlcolor=black
  ]
  {hyperref}
  \usepackage{amsmath}
  \usepackage{amssymb}
  \usepackage{accents}
  \usepackage{enumerate}

  \usepackage{graphicx}
  \usepackage{tikz-cd}
  \usetikzlibrary{decorations.pathreplacing}
  \usetikzlibrary{calc}

  \usepackage[arrow, matrix, curve]{xy}

  \usepackage{afterpage}
  \usepackage{subfig}
  \usepackage{setspace}

  \pagestyle{headings}

  \newtheorem{thm}{Theorem}[subsection]
  \newtheorem{lemma}[thm]{Lemma}
  \newtheorem{prop}[thm]{Proposition}
  \newtheorem{rem}[thm]{Remark}
  \newtheorem{cor}[thm]{Corollary}
  
  \newtheorem{ex}[thm]{Example}

  \newtheorem{mydef}[thm]{Definition}

  \makeatletter
  \@addtoreset{thm}{section}
  
  \providecommand*{\xhookrightfill@}{%
   \arrowfill@{\lhook\joinrel\relbar}\relbar\rightarrow
  }
  \providecommand*{\xhookrightarrow}[2][]{%
   \ext@arrow 0395\xhookrightfill@{#1}{#2}%
  }
  \makeatother

  \DeclareMathOperator{\cone}{cone}
  \DeclareMathOperator{\id}{id}
  \DeclareMathOperator{\im}{im}

  \newcommand{\MS}{\mathcal{MS}}
  \newcommand{\del}{\partial}
  \newcommand{\pp}{\bar{p}}
  \newcommand{\qq}{\bar{q}}

  \newcommand{\OMS}{\Omega_{\MS}^\bullet}
  \newcommand{\OMSc}{\Omega_{\MS,c}^\bullet}
  
  \newcommand{\OEMS}{\Omega_{E \MS}^\bullet}
  \newcommand{\ftsOMS}{ft_{<K} \Omega_{\MS}^\bullet}
  \newcommand{\ftgOMS}{ft_{\geq K} \Omega_{\MS}^\bullet}
  \newcommand{\ftgqOMS}{ft_{\geq K^*} \Omega_{\MS}^\bullet}
  \newcommand{\ftgOMSc}{ft_{\geq K} \Omega_{\MS, c}^\bullet}
  \newcommand{\ftgqOMSc}{ft_{\geq K^*} \Omega_{\MS, c}^\bullet}

  \newcommand{\OI}{\Omega I_{\bar{p}}^\bullet}
  
  \newcommand{\OIq}{\Omega I_{\bar{q}}^\bullet}
  \newcommand{\wOI}{\widetilde{\Omega I}_{\bar{p}}^\bullet}
  \newcommand{\wOIq}{\widetilde{\Omega I}_{\bar{q}}^\bullet}

  \newcommand{\dM}{\partial M}
  \newcommand{\dE}{\partial E}
  \newcommand{\dW}{\partial W}
  \newcommand{\dB}{\partial B}

  \newcommand{\dQ}{\partial Q}

  \newcommand{\R}{\mathbb{R}}
  
  \newcommand{\Z}{\mathbb{Z}}
  \newcommand{\N}{\mathbb{N}}
  \newcommand{\K}{\mathbb{K}}

  \newcommand{\BE}{\hfill \( \square \)}

  \newcommand{\proj}{\text{proj}}

  \newcommand{\tg}{\tau_{\geq L} }
  \newcommand{\tgq}{\tau_{\geq L^*} }
  \newcommand{\ts}{\tau_{<L}}

	  %
  %
  %
  %
  %
  %
  %
  %
  %
  %
  %
  %\inputonly{2}
  \begin{document}
  %\nocite*

  \title[Extending Intersection Space Cohomology]{Intersection Space Cohomology of Three-Strata Pseudomanifolds}
  \author{J. Timo Essig}
  \address{Karlsruher Institut f\"ur Technologie, Adenauerring 2, 76131 Karlsruhe}
  \email{timo.essig@kit.edu}

  \date{February, 2018}
  \subjclass[2010]{Primary: 55N33, 14J17, 58A10, 58A12; secondary: 57P10, 57R22, 81T30, 14J33}
  \keywords{Singularities, Stratified Spaces, Pseudomanifolds, Poincar\'e Duality, Intersection Cohomology, de Rham Theory, Differential Forms, Manifolds with Corners, Deformation of Singularities, Mirror Symmetry, Scattering Metric\\
The author was in part supported by a grant of the Evangelische Studienstiftung.}

\begin{abstract}
The theory of intersection spaces assigns cell complexes to certain stratified topological pseudomanifolds depending on a perversity function in the sense of intersection homology. The main property of the intersection spaces is Poincar\'e duality over complementary perversities for the reduced singular (co)homology groups with rational coefficients. This (co)homology theory is not isomorphic to intersection homology, instead they are related by mirror symmetry. Using differential forms, Banagl extended the intersection space cohomology theory to 2-strata pseudomanifolds with a geometrically flat link bundle. In this paper we use differential forms on manifolds with corners to generalize the intersection space cohomology theory to a class of 3-strata spaces with flatness assumptions for the link bundles. 
We prove Poincar\'e duality over complementary perversities for the cohomology groups. To do so, we investigate fiber bundles on manifolds with boundary.
At the end, we give examples for the application of the theory.
\end{abstract}

%\layout
	
\tikzstyle{reverseclip}=[insert path={(current page.north east) --
		  (current page.south east) 
		  		    (current page.south west) --
		      (current page.north west) --
		        (current page.north east)}
		]

\maketitle
\tableofcontents

%%Part 1: Intro
\section{Introduction}
%new
%1. What we do
In this paper, we give a de Rham description of the intersection space cohomology theory extending it to a class of Thom-Mather-stratified pseudomanifolds with three strata.
%stratification depth two.

To prove Poincar\'e duality for the resulting cohomology theory, we introduce a proof technique called the method of iterated triangles. Roughly speaking, one proves Poincar\'e duality by induction on the stratification depth using intermediate complexes of forms, distinguished triangles in the derived category over the reals and a Five-Lemma argument in the induction step.
This technique is also applicable for arbitrarily large stratification depth and might be the guideline to generalize the intersection space cohomology theory to a class of Thom-Mather-stratified pseudomanifolds of arbitrary stratification depth.

%2 Hinführung
The theory of intersection spaces, first introduced by M. Banagl in \cite{BanIS}, assigns CW complexes $I^{\pp} X$ to certain types of topological stratified pseudomanifolds. Those depend on a perversity function $\pp$ in the sense of Goresky and MacPherson, see \cite{GM1, GM2}. Their main property is Poincar\'e duality over complementary perversities for the reduced singular (co)homology groups with coefficients in a field. Additionally, using regular singular (co)homology, one gets perversity internal cup products for cohomology.
The construction of the intersection spaces is built upon a homotopy theoretic technique called Moore approximation or spatial homology truncation. The links of the singularities are replaced by a CW-complex (which is not always a subcomplex) with truncated homology. Note, that Moore approximation is an Eckmann-Hilton dual notion of Postnikov approximation. If the singularities are not isolated, one has to perform the Moore approximation equivariantly, see \cite{BanChr}.
Having a perversity internal cup product, intersection space cohomology cannot be isomorphic to intersection cohomology. This is underlined by the behaviour of both theories on cones of smooth manifolds: Intersection (co)homology of a cone equals the truncated (co)homology of the manifold, while intersection space (co)homology of a cone is equal to the cotruncated (co)homology of the manifold.

In \cite{BandR}, Banagl gives a de Rham description of intersection space cohomology, using differential forms on the top stratum of a Thom-Mather-stratified pseudomanifold of stratification depth one with geometrically flat link bundles. A bundle is called flat if the transition functions are locally constant and geometrically flat if, in addition, the structure group of the bundle is contained in the isometries of the fiber. 
Flat link bundles occur in reductive Borel-Serre compactifications of locally symmetric spaces and in foliated stratified spaces. The latter play a role in the work of Farrell and Jones on the topological rigidity of negatively curved manifolds, for instance, see \cite{FJ_FoliatedControlTheory, FJ_CpctNegCurved}.
For such bundles, the Leray-Serre spectral sequence with real coefficients collapses at the $E_2$ page, see \cite[Theorem 5.1]{BanFlatBdles}. 
Examples of flat sphere bundles with nonzero real Euler class, constructed by Milnor in \cite{MilnorCurvzero}, show that one cannot always equip the link of a flat bundle with a Riemannian metric such that the bundle becomes geometrically flat. Banagl uses Riemannian Hodge theory to cotruncate the de Rham complex on the fiber of the link bundle. The geometrical flatness condition then allows to perform that cotruncation fiberwisely. The de Rham complex computing intersection space cohomology consists of all forms on the top stratum of the pseudomanifold with restriction to a collar neighbourhood of the boundary equaling the pullback of a fiberwisely cotruncated form on the boundary. 
 
Banagl establishes a de Rham isomorphism for pseudomanifolds of depth one with only isolated singularities. 
%This result is generalized to nonisolated singularities and the depth two setting in \cite{dRThm_HI}.
Examples of applications of the intersection space cohomology theory contain K-theory (\cite[Chapter 2.8]{BanIS}), deformation of singular varieties in algebraic geometry (\cite{BM11}), perverse sheaves (\cite{BM14}), geometrically flat bundles and equivariant cohomology (\cite{BanFlatBdles}) and string theory in theoretical physics (\cite[Chapter 3]{BanIS} and \cite{BM14}).

%3 Zusammenfassung des Inhalts
\vspace{2ex}
The purpose of the present paper is a generalization of intersection space cohomology via the de Rham approach to certain pseudomanifolds of stratification depth two. The approach we pursue might be suitable to generalize the theory to pseudomanifolds of arbitrary stratification depth. However, not all the technical difficulties do already arise in the current setting. In \cite{BanD2}, Banagl uses homotopy pushouts of 3-diagrams of spaces to define intersection spaces for first cases of depth two pseudomanifolds. By using the de Rham approach, we enlarge the class of depth two pseudomanifolds intersection space cohomology is applicable to. 
Let $X$ be a compact, oriented, Thom-Mather-stratified pseudomanifold with three strata of different dimension, a zero-dimensional closed stratum (the stratum of maximum codimension), and a geometrically flat link bundle for the intermediate stratum. We then define a subcomplex $\OI$ of the complex of smooth differential forms on the blowup $M$ of $X.$ We prove the following Poincar\'e duality theorem for the cohomology groups $HI_{\pp}^\bullet (X) = H^\bullet \left( \OI (M \right)$.

\vspace{0.2cm}
\noindent
\textbf{Theorem \ref{thm:PD_OI_isolated}:} (Poincar\'e duality for HI) \\
\emph{For all} $r\in \Z,$ \emph{integration induces nondegenerate bilinear forms}
\begin{align*}
\int : HI_{\pp}^r (X) \times HI_{\qq}^{n-r} (X)  \rightarrow \R \\
\bigl ( [ \omega ],  [ \eta ] \bigr )   \mapsto \int_M \omega \wedge \eta.
\end{align*}

\noindent
We give a short overview about the construction of the complex $\OI$ in the depth two case and the idea of the proof of the Poincar\'e duality theorem.
%---das nachfolgende steht schon weiter oben!
%Recall that in the depth one setting of \cite{BandR}, the complex $\OI$ that computes intersection space cohomology is defined to be the forms on the blowup (or equivalently on the top stratum) with restriction to the collar neighbourhood of the boundary equaling the pullback of a fiberwisely cotruncated multiplicatively structured form on that boundary. 
If $X$ has three strata of different dimension, the blowup of the pseudomanifold is a $ \langle 2 \rangle-$manifold $M$.
%---in das entsprechende Kapitel einfügen
%also called manifold with faces, see e.g. \cite[Section 3]{Alpert} or \cite[Chapter 4]{VeronaStratMaps}. 
That is a manifold with corners, such that the boundary decomposes into two smooth manifolds with boundary $E$ and $W$, glued along their common boundary. In our setting, $E$ is the total space of a geometrically flat link bundle, while the connected components of $W$ are (trivially) fibered over points. 
$M$ comes equipped with a system of collars for $E, ~W$ and $\dE = \dW$ (which is induced by the Thom-Mather control data of $X$). The intermediate complex $\wOI (M)$ is defined to contain the smooth forms on $M$ with restriction to the collar neighbourhood of $E$ equaling the pullback of a fiberwisely cotruncated multiplicatively structured form on $E$.  
%The cohomology of $\wOI$ satisfies Poincar\'e-Lefschetz duality. 
$\OI (M)$ is then defined to contain the forms of $\wOI (M)$ with restriction to the collar of $W$ equaling the pullback of a cotruncated form on $W$. So, forms in $\OI (M)$ satisfy two different pullback-cotruncation properties. 
Thus, the restriction of the forms to the intersection of the two collar neighbourhoods of the boundary parts \( E \) and \( W \) (which is the collar neighbourhood of $\dE$) has to be both the pullback of an appropriate form on \( E \) as well as the pullback of an appropriate form on \( W \), hence the pullback of some form on \( \dE = \dW \).
This is the main difficulty in the proof of the Poincar\'e duality theorem for the cohomology of \( \OI (M) \).  
As mentioned before, we use the method of iterated triangles to prove Poincar\'e duality. That means that we use a chain of intermediate complexes and prove Poincar\'e-Lefschetz duality statements for them using distinguished triangles and 5-Lemma arguments. This results in Poincar\'e duality for $HI$. In this paper we need only one intermediate complex, namely $\wOI (M)$. For a pseudomanifold of stratification depth $d$ (with flatness conditions on the link bundles) one would need $d-1$ intermediate complexes.

\subsection*{General Notation}
For a smooth $\langle n \rangle-$manifold \( M \) with boundary \( \dM = \partial_1 M \cup ... \cup \partial_p M, \) a collar of the boundary part \( \partial_i M \) is denoted by \( c_i : \partial_i M \times [0,1) \hookrightarrow M \) with \( c_i|_{\partial_i M \times \{0\}} = \id_{\partial_i M} \). We mainly work with \( \langle 2 \rangle \)-manifolds, i.e. manifolds with corners and two boundary parts \( \partial_1 M = E \) and \( \partial_2 M = W\), \(\dM = E \cup_{\dE = \dW} W. \)
The inclusion of the boundary parts is denoted by \( j_E: E \hookrightarrow M \) and \( j_W : W \hookrightarrow M \) and the inclusion of the corner \( \dE = \dW \) by \( j_{\dW} = j_{\dE}: \dW \hookrightarrow M . \)
The image of a collar, \( \im c_i \subset M \) is called a collar neighbourhood.
%\( p: E \rightarrow B \) always denotes a fiber bundle with closed Riemannian fiber \( L \) and base \( B \).%, a compact manifold with boundary \( \dB \) 
For a real vector space \( V\), we denote the linear dual \( \text{Hom} (V, \R ) \) by \( V^\dagger \). 
%
%Additionally, let us give an overview about the complexes of differential forms we use (if they are definable): \( \OMS \) denotes the complex of multiplicatively structured forms on the total space of a geometrically flat fiber bundle. Its truncation and cotruncation are denoted by \( \ftsOMS \) and \( \ftgOMS \), respectively.
%The complex of partial intersection forms \( \wOI \) is the complex of forms on a \( \langle 2 \rangle \)-manifold with restriction to a collar neighbourhood of one boundary part equal to the pullback of a fiberwisely cotruncated multiplicatively structured form on that boundary part.
%Finally, \( \OI \) is the complex of intersection forms on a \( \langle 2 \rangle \)-manifold, the subcomplex of \( \wOI \) of forms with restriction to a collar neighbourhood of the other boundary part equaling the pullback of some either cotruncated form if the bottom stratum is zero dimensional or a fiberwisely cotruncated multiplicatively structured form on that boundary part otherwise.
%Note that the above complexes are also used in a relative notion. 
%
%
\section*{Acknowledgements}
This paper is based on the dissertation I wrote at the University of Heidelberg under the supervision of M. Banagl. It would not have been possible without his constant support, advice and guidance. For interesting discussions, helpful questions and important remarks, I am also deeply grateful to the members of the topology research group at the University of Heidelberg and to Eugenie Hunsicker from the University of Loughborough in England.
For generous financial support during my time as a PhD student, I thank the Evangelische Studienstiftung.

I am grateful to the anonymous referee for numerous suggestions, which helped improving this paper significantly.

%%Part 2: Mfds with Corners und Flat Bundles
\section{Collars on Bundles and Manifolds with Corners}
\subsection{Width of a collar}
In order to prove Poincar\'e duality for the later defined complexes on manifolds with boundaries we need the following relations between finite %good 
open covers and collars on manifolds with boundary. Since we only consider compact manifolds in the remainder of the paper, open covers will always have finite subcovers. 

\begin{mydef}[Small collars] \label{def:small_collar}
Let $B$ be a manifold with nonempty boundary $\dB,$ $c: \dB \times [0,1) \hookrightarrow B$ an open collar of the boundary and $\mathcal{U} := \left\{ U_\alpha  \right\}_{\alpha \in I}$ an open cover of $B$. Let $I_\partial \subset I$ be the index set containing the indizes of the $U_\alpha$ with nonempty intersection with the boundary $\dB.$ The collar $c$ is called \emph{small} with respect to the cover $\mathcal{U}$ if the following conditions are satisfied.
\begin{enumerate}
\item   $U_\alpha \cap B_{-} \neq \emptyset,$ for every $ \alpha \in I,$
  where \( B_{-} :=  B -c \bigl ( \dB \times [0,1) \bigr ). \)
\item There exist $W_\alpha \subset \dB$ open with $c \left( W_\alpha \times [0,1) \right ) \subset U_\alpha $ for each $\alpha \in I_\partial$ and such that $ \left\{ W_\alpha \right\}_{\alpha \in I_\partial}$ is an open cover of $\dB.$
\end{enumerate}
\end{mydef}

\begin{lemma}
\label{lemma:widthcollar}
Let \( B \) be a manifold with non-empty boundary \( \dB \), let \( c: \dB \times [0,1) \hookrightarrow B \) be an open collar of \( \dB \) in \( B \) and let \( \mathcal{U} := \{ U_\alpha \}_{\alpha \in I} \) be a finite %good 
open cover of \( B \) %such that the bundle trivializes with respect to each \( U_\alpha \in \mathcal{U} \)
. Then there is an \( \epsilon \in (0,1] \) such that the subcollar
\[
 c| : \dB \times [0, \epsilon ) \hookrightarrow B 
\]
is small with respect to $\mathcal{U}$.
\end{lemma}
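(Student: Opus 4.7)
My plan is to construct a single $\epsilon > 0$ that secures both smallness conditions simultaneously for the subcollar $c|_{\dB \times [0, \epsilon)}$. Since $\mathcal{U}$ is finite and (in the compact setting assumed throughout the paper) $\dB$ is compact, the argument will reduce to taking the minimum over two families of finitely many positive numbers arising from purely local considerations: one family controlling how far each $U_\alpha$ reaches away from $\dB$, and another controlling how wide a fibered neighbourhood each boundary point admits inside the cover.

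For condition (1), I would split $I$ into two parts. For those $\alpha \in I$ such that $U_\alpha$ meets $B \setminus c(\dB \times [0,1))$, there is already a point of $U_\alpha$ outside the entire collar, so $U_\alpha \cap \bigl(B - c(\dB \times [0, \epsilon))\bigr)$ is nonempty for every $\epsilon$. For the remaining $\alpha$, which satisfy $U_\alpha \subset c(\dB \times [0,1))$, the preimage $c^{-1}(U_\alpha)$ is open and nonempty in $\dB \times [0,1)$, hence contains a point $(x_\alpha, t_\alpha)$ with $t_\alpha > 0$. Setting $\epsilon_1 := \tfrac{1}{2} \min t_\alpha$ over this finite subcollection then guarantees condition (1) for every $\epsilon \leq \epsilon_1$.

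For condition (2), I use the compactness of $\dB$. For each $p \in \dB$, choose some $\alpha(p) \in I_\partial$ with $p \in U_{\alpha(p)}$; since $U_{\alpha(p)}$ is open and $c$ is a homeomorphism onto its image, there exist an open neighbourhood $V_p \subset \dB$ of $p$ and a number $\delta_p > 0$ with $c(V_p \times [0, \delta_p)) \subset U_{\alpha(p)}$. A finite subcover $V_{p_1}, \ldots, V_{p_k}$ of $\dB$ then yields $\delta_0 := \min_i \delta_{p_i} > 0$, and for each $\alpha \in I_\partial$ I define $W_\alpha := \bigcup \{ V_{p_i} : \alpha(p_i) = \alpha \}$ (possibly empty). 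By construction, $\{W_\alpha\}_{\alpha \in I_\partial}$ is an open cover of $\dB$, and $c(W_\alpha \times [0, \epsilon)) \subset U_\alpha$ for every $\epsilon \leq \delta_0$.

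The proof is then completed by taking $\epsilon := \min(\epsilon_1, \delta_0) \in (0,1]$. The main technical point is securing a single uniform width $\delta_0$ for condition (2): a priori the numbers $\delta_p$ depend on $p$, and extracting a positive lower bound is precisely the step that needs compactness of $\dB$. Everything else amounts to taking a minimum over a finite set and therefore presents no additional difficulty.
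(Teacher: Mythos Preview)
Your proof is correct and follows essentially the same approach as the paper's own argument: handle condition~(1) by picking, for each $U_\alpha$ entirely contained in the collar, a point at positive collar height, and handle condition~(2) by a compactness argument on $\dB$ producing finitely many product neighbourhoods $V_{p_i}\times[0,\delta_{p_i})$ whose minimum width gives the uniform $\epsilon$. The only cosmetic difference is that the paper arranges each local patch $W_x$ to sit inside \emph{every} $U_\alpha$ containing $x$ and then sets $W_\alpha=\bigcup_{x_i\in U_\alpha}W_{x_i}$, whereas you assign to each $p$ a single index $\alpha(p)$ and set $W_\alpha=\bigcup_{\alpha(p_i)=\alpha}V_{p_i}$; both choices yield the required open cover of $\dB$.
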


\begin{proof}
Let \( C = c \bigl ( \dB \times [0,1) \bigr )\).
If there are no \( U_\alpha \in \mathcal{U} \) such that \( U_\alpha \subset C \) we take \( \epsilon = 1 \) and are done. So suppose \( U_\alpha \subset C \). Since \( U_\alpha \subset B \) is open, there must be an \( N_\alpha \in \N \) such that \( U_\alpha \not \subset c \bigl ( B \times [0,1/n ) \bigr ) \) for all \( n \geq N_\alpha \). (Otherwise \( U_\alpha \) would be contained in \( \dB = c ( \dB \times \{0\} ) \).)
Choose such an \( N_\alpha \) for each \( \alpha \in I \) and set \( \epsilon := ( \max_{\alpha \in I} N_\alpha )^{-1} \in (0,1] \). This is well defined since the index set \( I \) is finite and the first relation in the definition is satisfied for that \( \epsilon \).\\
Assume without loss of generality that we could choose $\epsilon = 1$ in the above. Take an $\alpha \in I_\partial$. Let $x \in U_\alpha \cap \dB.$ Since $c^{-1} \left( U_\alpha \cap C \right) \subset \dB \times [0,1) $ is open, there exist $W_x \subset \dB$ open and an $\epsilon_x \in (0,1]$ such that $W_x \times [0,\epsilon_x) \subset c^{-1} \left( U_\alpha \cap C \right)$. If $x$ is contained in more than one of the $U_\alpha$'s, choose $W_x$ and $\epsilon_x$ so small that $c\left( W_x \times [0,\epsilon_x) \right) $ is contained in all these $U_\alpha$'s. Since $\dB$ is compact there are finitely many $x_1, \cdots, x_k \in \dB$ such that the $W_{x_i}$ cover $\dB.$ Let $\epsilon:= \min \epsilon_{x_i}$ and set $W_\alpha := \bigcup_{x_i \in U_\alpha} W_{x_i}.$ By definition, $c \left( W_\alpha \times [0,\epsilon)  \right) \subset U_\alpha$ and $\dB \times [0,\epsilon) \subset \bigcup_{\alpha \in I_\partial} W_\alpha \times [0,\epsilon).$ Hence, the collar $c|: \dB \times [0,\epsilon) \hookrightarrow B$ is small with respect to $\mathcal{U}.$
\end{proof}

\subsection{p-related Collars on Fiber Bundles}
We start with a proposition on \(p\)-related collars on a fiber bundle \(p: E \rightarrow B\)  over a base manifold \(B\) with boundary \( \dB \).
\begin{mydef}(\(p\)-related collars)\\
	Let \( p: E \rightarrow B \) be a smooth fiber bundle with closed smooth fiber \( F \) and \( B \) a compact smooth manifold with boundary \( \dB \). Let 
	\[
		c_{\dE}: \dE \times [0,1) \rightarrow E
	\]
	be a smooth collar on the manifold with boundary \( E \) and
	\[
		c_{\dB}: \dB \times [0,1) \rightarrow B
	\]
	a smooth collar on \( B \). Then \( c_{\dE} \) and  \( c_{\dB} \) are called \(p\)-related if and only if the diagram
	\[ \begin{tikzcd}
			\dE \times [0,1) \ar{r}{c_{\dE}} \ar{d}{p| \times \id} 	& 	E \ar{d}{p} \\
			\dB \times [0,1) \ar{r}{c_{\dB}} 				& 	B
	\end{tikzcd}
\]
commutes.
\end{mydef}

\begin{ex}
Let \( E = L \times B \) be a trivial link bundle. 
We then can take any collar \( c_{\dB}: \dB \times [0,1) \hookrightarrow B \) of \( \dB \) in \( B \) and take \( c_{\dE} := \id_L \times c_{\dB}: \dE \times [0,1) \hookrightarrow E  \). \(c_{\dE} \) is indeed a collar of \( \dE = \dB \times L \), since we work with closed fibers \( L \). Hence the diagram
\[
	\begin{tikzcd}
		\dE \times [0,1) \ar{r}{c_{\dE}} \ar{d}{\pi_2 \times \id} 	& 	 E = L \times B  \ar{d}{\pi_2} \\
		\dB \times [0,1) \ar{r}{c_{\dB}} 				& 	B
	\end{tikzcd}
\]
commutes and the collars are \( p\)-related.
\end{ex}

\begin{prop} \label{prop:prelatedcollars}
	For any smooth fiber bundle \( p : E \rightarrow B \) with base space a compact smooth manifold with boundary \( (B, \dB) \) and closed smooth fiber \( L \) there is a pair of \(p\)-related collars
	\[ \begin{split}
		c_{\dE}:  \dE \times [0,1) \hookrightarrow  E, \\
	c_{\dB}:   \dB \times [0,1) \hookrightarrow B.
	\end{split}
\]
Moreover, if a collar \( c_{\dB} : \dB \times [0,1) \hookrightarrow  B \) is given then a collar \( c_{\dE } : \dE \times [0,1) \hookrightarrow E \) can be chosen such that \( c_{\dE} \) and \( c_{\dB}| \) are \( p\)-related for some subcollar of \( c_{\dB} \). (In detail, one takes a subcollar \( c_{\dB}|_{\dB \times [0,\alpha)} \) for some \( \alpha \in (0,1] \) and reparametrizes it to get a map \( \dB \times [0,1) \hookrightarrow B \).)\\
%
%%Kommentar: Das nachfolgende ist nicht richtig. Z.B. findet man auf einem Streifen mit Kragen c(x,t) = (x, e^{x+t}) (okay, nicht ganz akkurat) keinen Kragen auf irgend einem Punkt x, sodass c projeziert auf die zweite Komponente diesen Kragen ergibt!!
%This is also true the other way round: If you start with a collar \( c_{\dE } : \dE \times [0,1) \hookrightarrow E \) on $E$, you can choose a collar \( c_{\dB} : \dB \times [0,1) \hookrightarrow  B \) such that \( c_{\dE}| \) and \( c_{\dB} \) are \( p\)-related for some subcollar of \( c_{\dE} \).
%such that \( c_E \) and \( c_B \) are \(p\)-related. In particular for any smooth fiber bundle with compact smooth base a manifold with boundary and closed smooth fiber there is a pair of \(p\)-related collars.
\end{prop}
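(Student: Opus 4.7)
The plan is to derive the first assertion from the second: since $B$ is a smooth compact manifold with boundary, it admits a smooth collar $c_{\dB}: \dB \times [0,1) \hookrightarrow B$ by the standard collar neighbourhood theorem, and the second assertion then yields a $p$-related $c_{\dE}$. Hence I would concentrate on the second statement, where $c_{\dB}$ is given.

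The strategy is to transport $\dE$ into $E$ by flowing along the horizontal lift of the inward-pointing vector field of $c_{\dB}$. Concretely, I would let $v$ be the smooth vector field on the open collar neighbourhood $c_{\dB}(\dB \times [0,1)) \subset B$ obtained as the pushforward of $\partial / \partial t$ under $c_{\dB}$, where $t$ denotes the coordinate on $[0,1)$. Next, I would choose a smooth Ehresmann connection on $p: E \rightarrow B$, i.e.\ a smooth horizontal distribution complementary to the vertical distribution $\ker dp$; such a connection exists by a partition-of-unity argument applied to local trivialisations. Let $\tilde v$ be the horizontal lift of $v$ to $p^{-1}(c_{\dB}(\dB \times [0,1))) \subset E$, so that $dp \cdot \tilde v = v$. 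Integral curves of $\tilde v$ then project under $p$ onto integral curves of $v$.

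Next, I would produce the collar on $E$ by flowing points of $\dE$ along $\tilde v$. Since $\dE = p^{-1}(\dB)$ is compact -- being a fibre bundle with closed fibre $L$ over the compact base $\dB$ -- and $\tilde v$ is smooth on an open neighbourhood of $\dE$, a standard uniform-flow-time argument provides an $\alpha \in (0,1]$ such that the flow $\Phi: \dE \times [0,\alpha) \rightarrow E$ of $\tilde v$ is defined and smooth. For each $e \in \dE$, the curve $t \mapsto p(\Phi(e,t))$ is an integral curve of $v$ starting at $p(e) \in \dB$, hence coincides with $t \mapsto c_{\dB}(p(e),t)$; this is exactly the $p$-relatedness condition. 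As $v$ is transverse to $\dB$ and $\tilde v$ projects to $v$, the vector field $\tilde v$ is transverse to $\dE$, so $\Phi$ is an immersion along $\dE \times \{0\}$. After possibly shrinking $\alpha$, compactness of $\dE$ upgrades $\Phi$ to an embedding of $\dE \times [0,\alpha)$ onto an open neighbourhood of $\dE$ in $E$. A reparametrisation $[0,\alpha) \cong [0,1)$ fixing $0$, applied simultaneously to $\Phi$ and to $c_{\dB}|_{\dB \times [0,\alpha)}$, then produces the required pair of $p$-related collars.

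The main obstacle is ensuring that the flow yields a genuine collar (an embedding defined on a domain of uniformly positive length in the $t$-direction) rather than merely a local trivialisation around each fibre. Both issues -- the existence of a uniform flow time and global injectivity after shrinking $\alpha$ -- rest on the compactness of $\dE$, which is where the hypothesis that $L$ is closed enters in an essential way. The remaining ingredients (existence of the Ehresmann connection, smoothness of $\tilde v$ and its flow, and the final reparametrisation to $[0,1)$) are routine.
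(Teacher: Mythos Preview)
Your proof is correct and follows the same overall strategy as the paper: lift an inward-pointing vector field on $B$ to a $p$-related vector field on $E$, then obtain the collars from the flows. The one genuine difference is in how the lift is produced. You invoke an abstract Ehresmann connection and take the horizontal lift, whereas the paper builds the lift explicitly by pushing $(\rho_\alpha X_\alpha,0)$ through the local trivialisations $\phi_\alpha^{-1}$ and summing with a partition of unity. Conceptually these are the same move---the paper's patched local product lifts amount to choosing a particular connection---so the arguments are equivalent for the proposition as stated. The paper's concrete construction has one downstream payoff you do not automatically get: when the bundle is flat, its lift is horizontal for the flat connection, so the resulting collar $c_{\dE}$ satisfies $\pi_2\circ\phi_\beta\circ c_{\dE}$ independent of the collar coordinate (Remark~\ref{rem:prelColTriv}), a property used later in Lemma~\ref{lemma:MSdc}. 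Your argument would recover this too, provided you choose the Ehresmann connection to be the flat one.
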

\begin{proof} We start with the first part and proceed as follows:
\begin{enumerate}
	\item First we construct a vector field \( X \) on \( B \) which is nowhere tangent to \( \dB \). The flow of this vector field then gives the collar \( c_{\dB} \) on \(B\). 
%One can compare this approach to the proof of \cite[Theorem 6.2.1]{Hirsch}.
	\item By locally lifting this vector field, we construct a vector field \( Y \) on \( E \) that is nowhere tangent to \( \dE \) and \( p\)-related to \( X \), i.e. for each \( e \in E \) we have
		\[ p_* Y_e = X_{p(e)}. \]
	\item By \cite[Prop 4.2.4]{Abraham}, we then have the relation
		\[ p \circ \eta^Y_t = \eta^X_t \circ p. \]
		for the flows \( \eta^X \) of \( X \) and \( \eta^Y \) of \( Y \).
		That relation implies the statement of the proposition.
\end{enumerate}
The first step is quite simple and standard: Take a finite good open cover \( \bigl \{ U_\alpha \bigr \}_{\alpha \in I} \) of \( B \) such that the bundle trivializes with respect to this cover. Then let \( J \subset I \) denote the set of those \( \alpha \in I \) with \( U_\alpha \cap \dB \neq \emptyset \). For each \( \alpha \in J \) define a vector field \( X_\alpha \) on \( U_\alpha \) by taking the induced vector field of \( \partial_b \) on \( \R^{b-1} \times [0, \infty) \) by the coordinate map \( \nu_\alpha \). Then take a partition of unity \( \{ \rho_\alpha \}_{\alpha \in I} \) subordinate to the cover \( \{ U_\alpha \} \) and define
\[
	X := \sum_{\alpha \in J} \rho_\alpha X_\alpha.
\]
To obtain the vector field \( Y \in \mathfrak{X}(E) = \Gamma (TE) \) we proceed as follows:
Since there is a natural isomorphism between vector bundles
\[
	T (U_\alpha) \times T(L) \stackrel{\cong}{\longrightarrow} T (U_\alpha \times L) 
\]
for all \( \alpha \in I \), we can lift the vector field \( \rho_\alpha X_\alpha \in \mathfrak{X}(U_\alpha) \) to
\(
	(\rho_\alpha X_\alpha,0),
\) a section of \( T(U_\alpha) \times T (L) \cong T(U_\alpha \times L)\), which still has compact support in \( U_\alpha \times L.\)

Since \( p: E \rightarrow B \) is a fiber bundle with fiber \( L\) and \( \bigl \{ U_\alpha \bigr \}_{\alpha \in I} \) a covering of the base \( B \) with respect to which the fiber trivializes, we have a diffeomorphism \( \phi_\alpha : p^{-1} (U_\alpha) \xrightarrow{\cong} U_\alpha \times L, \) for all \( \alpha \in I \), such that the diagram
\[ \begin{tikzcd}
		p^{-1} (U_\alpha) \ar{rr}{\phi_\alpha}[swap]{\cong} \ar{dr}[swap]{p|} & ~ & 
		U_\alpha \times L 
		\ar{dl}{\pi_1} \\
		~ & U_\alpha & ~ 
\end{tikzcd}
\]
commutes.
Note that since the \( \phi_\alpha \) are diffeomorphisms, there exist pushforward vector fields \( {\phi_\alpha^{-1}}_* (\rho_\alpha X_\alpha, 0) \in \mathfrak{X} (p^{-1} (U_\alpha) ) \) with compact support (in \( p^{-1} (U_\alpha) \)). 
Since the family \( \bigl \{ p^{-1} (U_\alpha) \bigr \}_{\alpha \in I} \) is an open cover of \( E \), such that the sets in \( \bigl \{ p^{-1} (U_\alpha) \bigr \}_{\alpha \in J} \) cover an open neighbourhood of the boundary \( \dE \) of \( E\), we can set
\[
	Y := \sum_{\alpha \in J} {\phi_\alpha^{-1}}_* (\rho_\alpha X_\alpha,0) 
\]
to get a vector field \( Y \in \mathfrak{X} (E) \) that is nowhere tangent to \( \dE \). Let \( x \in \dE \), then \( x \in p^{-1} ( U_{\alpha_1...\alpha_r} = U_{\alpha_1} \cap ... \cap U_{\alpha_r}) \) for some \( \alpha_1, ..., \alpha_r \in J \). Then \( Y_x \) is not tangent to \( \dE \) if and only if \( ((\phi_{\alpha_1})_* Y)_{\phi_{\alpha_1} (x)} \) is not tangent to \( \dB \times U_{\alpha_1} \times L \).
\[ \begin{split}
	((\phi_{\alpha_1})_* Y)_{\phi_{\alpha_1} (x)} & = \sum_{i=1}^r \rho_{\alpha_i} (p(x)) \bigl ( [\id \times (\pi_2 \circ \phi_{\alpha_1} \circ \phi_{\alpha_i})]_* (X_{\alpha_i},0) \bigr )_{\phi_{\alpha_1} (x)} \\
	&= \sum_{i=1}^r \rho_{\alpha_i} (p(x)) (X_{\alpha_i})_{p(x)}. 
\end{split} \]
Now this is of course not tangent to the boundary since by definition of the \( X_\alpha \in \mathfrak{X} (U_{\alpha} ) \) we have (with again the \( \nu_\alpha \) the coordinate maps of the base): \( (\nu_{\alpha_1}^{-1})_* X_\alpha = (\nu_{\alpha_1}^{-1} \circ \nu_\alpha)_* \partial_b = \sum_{i=1}^b a_i \partial_i \) with \( a_b > 0 \) since the transition maps are maps between manifolds with boundary.

\vspace{0.2cm}
Further, we have to show that \( X \) and \(Y \) are \(p\)-related, i.e. it holds that
\( p_* Y_e = X_{p(e)} \) for every \( e \in E \). This is equivalent to the statement that for all smooth functions on an open subset of \( B \) it holds that
\[ Y (f \circ p) = (Xf) \circ p.
\]
(see e.g. \cite[Lemma 3.17]{Lee}).
For let \( f : U \rightarrow \R \) be a smooth function on an open subset \( U \subset B \) and let \( x \in p^{-1} (U) \). Then
\[ \begin{split}
		Y(f \circ p) (x) & = Y_x (f \circ p) \\
		& = \sum_{\alpha \in \widetilde{J} }  {\phi_\alpha^{-1}}_* (\rho_\alpha X_\alpha, 0 )_{\phi_\alpha (x)} (f \circ p) 
		\quad \text{with} ~ \widetilde{J} = \{ \alpha \in J | x \in p^{-1} (U_\alpha) \}	\\
		& = \sum_{\alpha \in \widetilde{J}} \bigl (\rho_\alpha  X_{\alpha}, 0 \bigr )_{\phi_\alpha (x)} (f \circ p \circ \phi_\alpha^{-1}) \\
		& = \sum_{\alpha \in \widetilde{J}} \rho_\alpha \bigl ( p(x) \bigr ) \bigl ( (X_\alpha)_{p(x) = \pi_1 \circ \phi_\alpha (x)} , 0_{\pi_2 \circ \phi_\alpha (x)} \bigr ) (f \circ \pi_1 ) \\
		& = \sum_{\alpha \in \widetilde{J}} \rho_\alpha \bigl ( p(x) \bigr ) (X_\alpha)_{p(x)} (f) = X_{p(x)} (f) = (Xf) \bigl (p(x) \bigr ).
\end{split}
\]

\vspace{0.2cm}
As mentioned, for every \( t \), this implies the relation
\begin{equation}
	p \circ \eta_t^Y = \eta^X_t \circ p 
	\label{eq:bdleflows}
\end{equation}
for the flows \( \eta^X \) of the vector field \( X \in \mathfrak{X}(B) \) and \( \eta^Y \) of \( Y \in \mathfrak{X} (E) \) (which are embeddings by \cite[Theorem 9.2.4]{Lee}). This relation implies the claim since there are open neighbourhoods \( W_B \subset \dB \times [0, \infty) \) of \( \dB \) and \(W_E \subset \dE \times [0, \infty) \) of \( \dE \) respectively, such that the flows \( \eta^X \) and \( \eta^Y \) are defined on these open subsets. But then there are constants \( \epsilon_B, \epsilon_E > 0 \) such that
\( \dB \times [0, \epsilon_B ) \subset W_B \)
and
\( \dE \times [0, \epsilon_E ) \subset W_E. \)
Let \( \epsilon := \min (\epsilon_B, \epsilon_E ) \) and let \( f : [0,1) \rightarrow [0, \epsilon)  \) be a diffeomorphism. Then we have collar embeddings
\[ c_{\dB}: \dB \times [0,1) \stackrel{\id \times f}{\longrightarrow} \dB \times [0, \epsilon) \stackrel{\eta_X}{\longrightarrow} B \]
and
\[ \begin{tikzcd} 
		c_{\dE}: \dE \times [0,1) \ar{r}{\id \times f} & \dE \times [0, \epsilon) \ar{r}{\eta_Y} &  E  
\end{tikzcd}
\]
such that
\[\begin{split}
		p \circ c_{\dE} ~ (x,t) & = (p \circ \eta^Y_{f(t)}) (x)  = \eta^X_{f(t)} \bigl ( p(x) \bigr ) \quad \bigl ( \text{by eq.} ~ (\ref{eq:bdleflows}) \bigr )\\
		& = c_{\dB} \circ (p \times \id)  (x,t).
\end{split}
\]

\vspace{0.5cm}
For the second part of the proof we proceed likewisely, but take a special vector field in step 1: The collar allows us to define a vector field \( \widetilde{X} \in \mathfrak{X} (C_{\dB} ) \) (with \( C_{\dB} = \im c_{\dB} \)) by taking the pushforward of \( \partial_t \): \( \widetilde{X} = (c_{\dB})_* \partial_t  \). Then for any \( q \in \dB \) and any \( f \in C^{\infty} (C_{\dB}) \) it holds that
\[ (\widetilde{X}f) \bigl ( c_{\dB} (\tau,q) \bigr ) = \bigl ( \partial_t (f \circ c_{\dB}) \bigr ) (\tau,q) 
	= \frac{d}{dt} \Bigl |_{t=\tau} (f \circ c_{\dB}) (t,q). \]
This means that the flow of the vector field restricted to the boundary \( \dB \) is the given collar \( c_{\dB} \). 
We then "lift" this vector field as before, not to a vector field on the whole total space \( E \) but rather to a vector field \( Y \in \mathfrak{X} \bigl ( p^{-1} (C_{\dB}) \bigr ) \), where \( p^{-1} (C_{\dB}) \) is an open neighbourhood of the boundary, by setting
\[ Y = \sum_{\alpha \in J } {\phi_\alpha^{-1}}_* (\rho_{\alpha} \widetilde{X}|, 0 ). \]
As before this defines a nowhere vanishing vector field which is nowhere tangent to the boundary \( \dE \). The rest is a complete analogy to the first step. Note that it suffices to have the vector fields on open neighbourhoods of the boundary since we later only need the flow of the vector fields restricted to the boundary.%\\
%%siehe den Kommentar oben: Teil 3 des Statements ist imho nicht korrekt
%The arguments to prove the third part of the statement are slightly easier versions of the previous ones. The collar $c_{\dE}$ on $E$ defines a vector field $\widetilde{Y} \in \mathfrak{X} (C_{\dE})$ by taking the pushforward of $\partial_t$. The flow of this vector field again is the collar we started with. We now project this vector field down, i.e. we set $X := p_* \widetilde{Y}.$ Trivially, both vector fields are $p-$related and nowhere tangent to the boundaries. The rest is done as before.
\end{proof}

\begin{rem}[$p$-related collars and local trivializations]\label{rem:prelColTriv}
Note that for a small collar $c_{\dB}$, as in Definition \ref{def:small_collar}, the construction of $c_{\dE}$ in the proof of previous proposition , the maps $\pi_2 \circ \phi_\beta \circ c_{\dE}|_{p^{-1} (W_\beta)},$ with $\phi_\beta: p^{-1} (U_\beta) \to U_\beta \times L$ the local trivializations, $\pi_2: U_\alpha \times L \to L$ the second factor projection and the $W_\beta \subset \dB$ belonging to the small collar, are independent of the collar coordinate if the bundle is flat. This is true, since the restriction of the vector field $Y = \sum_{\alpha \in J} {\phi_\alpha^{-1}}_* (\rho_\alpha X_\alpha,0)$ to some $p^{-1}\left( c_{\dB}(W_\beta \times [0,1) ) \right), ~ \beta \in J$ is
\begin{align*}
Y|_{p^{-1} \left( c_{\dB} (W_\beta \times [0,1) ) \right) } 	&= \sum_{\alpha \in J} ( \phi_\beta^{-1} \circ \phi_\beta )_* \; (\phi_\alpha^{-1})_{*} \; (\rho_\alpha X_\alpha, 0) \\
			&= (\phi_\beta^{-1})_* \sum_{\alpha \in J} (\id \times g_{\beta \alpha})_* \; (\rho_\alpha X_\alpha, 0) = (\phi_\beta^{-1})_* \sum_{\alpha \in J} (\rho_\alpha X_\alpha,0 ).
\end{align*}
The statement of \cite[Prop 4.2.4]{Abraham} then gives the relation 
\[ \eta^Y_t = \phi_\beta^{-1} \circ \, (\eta'_t \times \id_L) \circ \phi_\beta \]
for the flows $\eta^Y_t$ of $Y$ and $\eta'_t$ of $\sum_{\alpha \in J} \rho_\alpha X_\alpha.$ The following calculation shows, that this gives the statement. (Recall the notation from the proof of the previous proposition).
\[ \pi_2 \circ \phi_\beta \circ c_{\dE} (x,t) = \pi_2 \circ \phi_\beta \circ \phi_\beta^{-1} \circ \left( \eta'_{f(t)} \times \id_L \right) (\phi_\beta (x))  = \pi_3 \circ \phi_\beta (x), \]
with $\pi_3: W \times [0,1) \times L \to L$ the projection. 
From now on, we assume that all the $p$-related collars with $c_{\dB}$ small on flat bundles in usage satisfy this relation. We will use this, e.g. in the proof of Lemma \ref{lemma:MSdc}, to move integration in the collar direction of multiplicative forms near the boundary $\dE$ to the base factor.
\end{rem}

\subsection{Collars on Manifolds with Corners} \label{subsection:CollarsMfdsCorners}
We are going to work with differential forms on a smooth manifold with corners \( M^n \), the boundary of which can be subdivided as \( \dM = E \cup_{\dE = \dW} W \), satisfying certain conditions near the boundary parts \( E \) and \( W \). In order to define ``near \(E, ~ W \)'' precisely we have to investigate how the concept of a collar on a manifold with boundary generalizes to manifolds with corners of that type.
In \cite{VeronaStratMaps}, the author proves a theorem, see \cite[Theorem 6.5]{VeronaStratMaps}, that can be interpreted as a transition between Thom-Mather-stratified pseudomanifolds and manifolds with faces: Any such pseudomanifold can be obtained from a manifold with faces by making certain identifications on the faces.

\begin{mydef}(Manifolds with Faces)\\
	Let \( M^n \) be an \( n \)-dimensional manifold with corners and for each \( x \in M \) let \( c(x) \) denote the number of zeroes of \( \phi (x) \in \R^n_+ = [0, \infty)^n\) for any coordinate chart \( \phi : U \rightarrow \R^n_+ \) with \( x \in U \). A face is the closure of a connected component of the set \( \{ p \in M | c(p) = 1 \} \). Then \( M \) is called a manifold with faces if each \( x \in M \) is contained in \( c(x) \) different faces. 
\end{mydef}

\begin{ex}\ \\
\begin{minipage}[l]{0.49\textwidth}
A \( 2 \)-dimensional disc with one corner is a manifold with corners but not with faces, since the corner point does not lie in \( 2 \) faces but only in one.
\end{minipage}
\begin{minipage}[r]{0.49\textwidth}
\begin{center}
\begin{tikzpicture}
\draw[
    draw,
    line join=round]
 (30:1) arc (30:-210:1)
to[rounded corners=0] (90:1.5)
to cycle;
\node at (0,0) {disc};
\node[right] at (90:1.5) {corner};
\draw (0:1.1) -- (-10:2);
\node[below,right] at (-10:2) {boundary face};
    \end{tikzpicture}
\end{center}
\end{minipage}
\end{ex}

As mentioned, manifolds with faces are considered by Verona in \cite[Chapter 4]{VeronaStratMaps} to examine triangulability of stratified mappings, but also by Alpert in \cite[Section 3]{Alpert} to estimate simplicial volume, by J\"anich in \cite{Jaenich} to classify $O(n)-$Manifolds and by Laures in \cite{Laures} to investigate cobordisms on manifolds with corners.

The latter two authors also define \( \langle n \rangle \)-manifolds, which are manifolds with faces together with a decomposition of the boundary into $n$ faces that satisfy the following relations. 

\begin{mydef}(\( \langle n \rangle \)-manifolds) [See \cite[Def. 1]{Jaenich}] \\
	A manifold with faces \( M \) together with a \( n \)-tuple of faces \( (\partial_0 M, ... , \partial_{n-1} M ) \) is called an \( \langle n \rangle \)-manifold if 
	\begin{enumerate}
		\item \( \dM = \bigcup_{i=0}^{n-1} \partial_i M \),
		\item \( \partial_i M \cap \partial_j M \) is a face of both \( \partial_i M \) and \( \partial_j M \) if \( i \neq j \).
	\end{enumerate}
\end{mydef}

Note that a \( \langle 0 \rangle \)-manifold is just a usual manifold (without boundary) and a \( \langle 1 \rangle \)-manifold is a manifold with boundary. Simple examples of \( \langle n \rangle \)-manifolds for arbitrary \( n \in \N \) are  \( \R_+^n \) or the standard \( n \)-simplex. We focus on \( n= 2 \).

\vspace{0.2cm}
So let \( M^n \) be an \( n \)-dimensional \( \langle 2 \rangle \)-manifold with faces \( E, ~ W \) (hence \( \dE = \dW \)).
By \cite[Lemma 2.1.6]{Laures} there are collars 
\[
	\begin{split}
		c_{\dE}: ~& \dE \times [0,1) \hookrightarrow E, \\
		c_{\dW}: ~& \dW \times [0,1) \hookrightarrow W, \\
		c_E: ~& E \times [0,1) \hookrightarrow M ~ \text{and} \\
		c_W: ~& W \times [0,1) \hookrightarrow M,\\
	\end{split}
\]
with \( C_X:= c_X( X \times [0,1) ) \) for \( X = \dE, ~ \dW, ~ E, ~ W \) such that \[ c_E|_{\dE \times [0,1)} = c_{\dW} \] and \[ c_W|_{ \dW \times [0,1)} = c_{\dE}. \]

\begin{prop}
	Let \( M^n \) be a \( \langle 2 \rangle\)-manifold with boundary \( \dM = E \cup W \) as before. Then any two collars \( c_{\dE}: \dE \times [0,1)  \hookrightarrow E \) and \( c_{\dW}: \dW \times [0,1) \hookrightarrow W \) extend to collars
	\[ \begin{split}
			c_E : ~& E \times [0,1) \hookrightarrow M, \\
			c_W : ~& W \times [0,1) \hookrightarrow M,
		\end{split} \]
	i.e. \( c_E|_{\dW \times [0,1)} = c_{\dW} \) and \( c_W|_{\dE \times [0,1)} = c_{\dE} \).
\end{prop}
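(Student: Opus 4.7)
The plan is to mimic the vector field construction used in the proof of Proposition \ref{prop:prelatedcollars}: produce a vector field $X \in \mathfrak{X}(M)$ whose flow, restricted to $E \times \{0\}$, defines the desired extension $c_E$, and construct $c_W$ analogously from an independent vector field $Y$. It is enough to describe the construction of $c_E$; that of $c_W$ is obtained by exchanging the roles of $E$ and $W$ and of the two boundary collars.

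I would first list the properties required of $X$, which is to be defined on an open neighbourhood of $E$ in $M$: (i) $X$ is nowhere tangent to $E$ and points into $M$; (ii) on a neighbourhood of $c_{\dW}(\dW \times [0,1))$ inside $W$ the vector field equals $(c_{\dW})_* \partial_t$, which is tangent to $W$; (iii) $X$ is tangent to $W$ on some neighbourhood of $\dW$ in $M$. Property (i) ensures, via the flow $\eta^X_\tau$, an embedding $E \times [0, \epsilon) \hookrightarrow M$ for small $\epsilon > 0$. Properties (ii) and (iii) together force $\eta^X_\tau(p) = c_{\dW}(p, \tau)$ for every $p \in \dW$ and every sufficiently small $\tau$, by uniqueness of integral curves. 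Reparametrizing the second factor by a diffeomorphism $[0,1) \to [0, \epsilon)$ that is the identity near $0$ then yields the collar $c_E$ with $c_E|_{\dW \times [0,1)} = c_{\dW}$.

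To build such an $X$ I would cover an open neighbourhood of $E$ in $M$ by a finite collection of coordinate charts, separating those that meet the corner $\dE = \dW$ from those that do not. On a chart meeting the corner, local corner coordinates $(u, x, y) \in \R^{n-2} \times [0,1)^2$ present $E$ as $\{y = 0\}$ and $W$ as $\{x = 0\}$; one may further arrange, using the given collar $c_{\dW}$, that $(c_{\dW})_* \partial_t$ coincides with $\partial_y$ on $W \cap U$. In such a chart set the local representative of $X$ equal to $\partial_y$, so that (ii) and (iii) hold locally and $X$ is transverse to $E$. On a chart disjoint from the corner we revert to the standard inward-pointing vector field construction used in step 1 of the proof of Proposition \ref{prop:prelatedcollars}. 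Glue the local vector fields by a partition of unity; convex combinations preserve both transversality to $E$ and tangency to $W$.

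The main technical obstacle will be to ensure that the partition-of-unity gluing does not perturb condition (ii): the prescribed value $(c_{\dW})_* \partial_t$ on a neighbourhood of the image of $c_{\dW}$ must survive the averaging. I would handle this by choosing the cover so that in a small collar neighbourhood of $c_{\dW}(\dW \times [0,1))$ only the corner charts intersect it, and by selecting bump functions equal to $1$ on that neighbourhood; after shrinking the collar parameter, if necessary in the spirit of Lemma \ref{lemma:widthcollar}, the assignment $c_E(p, \tau) := \eta^X_{f(\tau)}(p)$ then provides the desired embedding extending $c_{\dW}$, and the completely analogous construction, starting from $c_{\dE}$, supplies $c_W$.
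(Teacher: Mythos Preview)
Your approach is essentially the paper's: interpret $c_{\dW}$ as the flow of the vector field $(c_{\dW})_*\partial_t$ on $W$, extend it to an inward-pointing vector field on a neighbourhood of $E$ in $M$, and take the flow; the paper's proof sketches exactly this in one sentence, while you spell out the chart-and-partition-of-unity construction of the extension. One small wrinkle: your global reparametrisation $f:[0,1)\to[0,\epsilon)$ gives $c_E|_{\dW\times[0,1)}=c_{\dW}\circ(\id\times f)$ rather than $c_{\dW}$ itself, so to obtain the exact equality claimed in the statement you should instead use a position-dependent reparametrisation that is the identity on a neighbourhood of $\dE$ in $E$ (where the flow is already defined for all $t\in[0,1)$ since it coincides with $c_{\dW}$ there) and compresses $[0,1)$ into $[0,\epsilon(p))$ only away from the corner.
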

\begin{proof} The proof is simple: Interpret the collars as flows of vector fields on \( E, ~ W \) which do not vanish on the boundaries and point inwards and extend them to vector fields on \( M \) (for example using an arbitrary collar on \( M \)) which do not vanish anywhere on \( W, ~ E \) and point away from $W,~E$, respectively. The flows of these vector fields are collars \( c_W \) and \( c_E \) with the desired properties.
\end{proof}

\begin{cor} \label{cor:exnicecollars_isolated}
	As before, let \( M \) be a \( \langle 2 \rangle \)-manifold with boundary \( \dM = E \cup_{\dE} W \). Assume furthermore that \( E \) is the total space of a fiber bundle \( p: E \rightarrow B \) with closed fiber \( L \) and a compact base manifold with boundary \( B \). Then there are collars \( c_E, ~ c_W \) of \( E, ~ W \) in \( M \) and \( c_{\dB} \) of \( \dB \) in \( B \) such that \( c_W|_{\dE \times [0,1)}  \) and \( c_{\dB} \) are \( p\)-related.
\end{cor}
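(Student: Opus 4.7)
My plan is to combine Proposition \ref{prop:prelatedcollars} with the preceding extension proposition for \( \langle 2 \rangle \)-manifolds. The only data to be arranged is that the corner collar \( c_W |_{\dE \times [0,1)} \) of \( \dE \) in \( E \subset M \) be \( p \)-related to a collar of \( \dB \) in \( B \); the extension proposition says that the corner collar alone determines (up to extension) the face collar \( c_W \) of \( W \) in \( M \), so I only need to start from a good corner collar.

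First I would invoke Proposition \ref{prop:prelatedcollars} applied to the bundle \( p|: \dE \rightarrow \dB \) — which is a fiber bundle with closed fiber \( L \) over the compact manifold \( \dB \) (without boundary, since \( \dB \) is the boundary of \( B \), or more generally by restricting \( p \) to the boundary component) — no wait, I need to be more careful. Actually, the proposition is applied to the bundle \( p : E \to B \) itself: this directly yields a pair of \( p \)-related collars
\[ c_{\dE} : \dE \times [0,1) \hookrightarrow E, \qquad c_{\dB} : \dB \times [0,1) \hookrightarrow B. \]
Next I would pick any collar \( c_{\dW} : \dW \times [0,1) \hookrightarrow W \) of \( \dW \) in the manifold with boundary \( W \); such a collar exists by the standard collar neighbourhood theorem. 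Using the identification \( \dE = \dW \), the pair \( (c_{\dE}, c_{\dW}) \) is now a pair of corner collars on \( M \).

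Then I apply the previous proposition to extend these two corner collars to face collars
\[ c_E : E \times [0,1) \hookrightarrow M, \qquad c_W : W \times [0,1) \hookrightarrow M, \]
with the compatibility \( c_E|_{\dW \times [0,1)} = c_{\dW} \) and, crucially, \( c_W|_{\dE \times [0,1)} = c_{\dE} \). By the first step, \( c_{\dE} \) and \( c_{\dB} \) are \( p \)-related, so \( c_W|_{\dE \times [0,1)} = c_{\dE} \) is \( p \)-related to \( c_{\dB} \), which is exactly the assertion.

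There is no serious obstacle here; the only thing to double-check is that the extension proposition is set up symmetrically enough that one can prescribe both corner collars (rather than producing them from nothing), which is visible from its proof via extending inward-pointing vector fields from \( E \) and \( W \) to all of \( M \). If one wanted a cleaner formulation one could instead start from \( c_{\dB} \) arbitrary and use the second, constructive part of Proposition \ref{prop:prelatedcollars} to produce \( c_{\dE} \) matching (a reparametrised subcollar of) \( c_{\dB} \), then proceed exactly as above; this variant is useful later because it lets one prescribe \( c_{\dB} \) first, e.g. as a small collar in the sense of Definition \ref{def:small_collar}.
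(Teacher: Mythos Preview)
Your argument is correct and matches the paper's proof essentially verbatim: apply Proposition \ref{prop:prelatedcollars} to obtain a \(p\)-related pair \((c_{\dE}, c_{\dB})\), choose any collar \(c_{\dW}\) of \(\dW\) in \(W\), and then invoke the preceding extension proposition to produce \(c_E, c_W\) with \(c_W|_{\dE \times [0,1)} = c_{\dE}\). Your closing remark about prescribing \(c_{\dB}\) first (e.g.\ as a small collar) is a worthwhile observation that the paper exploits later but does not spell out here.
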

\begin{proof} Take a pair of \( p\)-related collars of \( \dE \) in \( E \) and of \( \dB \) in \( B \) and any collar of \( \dW \) in \( W \) and then use the previous Proposition to extend them to collars of \( E, ~ W \) in \( M \).
\end{proof}

\section{Thom-Mather-stratified Pseudomanifolds with depth 2}
\subsection{Thom-Mather-stratified Spaces}\label{subs:ThomMather}
If one wants to work with differential forms there has to be some smooth structure. Hence we do not work with topological stratified pseudomanifold as defined for example in \cite[Definition 4.1.1]{BanTI} but use Thom-Mather smooth stratified spaces. We use the definition of B. Hughes and S. Weinberger, cf. \cite[sect. 1.2]{WeinbergerSurgery}. (Another older reference is \cite{MatherTopStability}.)
In this paper we work with \( C^\infty \)-Thom-Mather stratified pseudomanifolds. In \cite{MatherTopStability} and \cite{MatherStratMappings}, Mather proved that every Whitney stratified space has a \( C^\infty \)-Thom-Mather stratification. Since Whitney showed in \cite{WhitneyTangents} that any complex or real analytic set admits a Whitney stratification, those are examples for the type of spaces we consider.

Note further, that Mather also proved, using Thom's isotopy lemmas, that any stratum \( X_i \) in a Thom-Mather stratified space has a neighbourhood \( N \) such that the pair \( (N, X_i) \) is homeomorphic to the pair \( (\text{cyl} (f), X_i) \), with \( \text{cyl} (f) \) the mapping cylinder of some fiber bundle \( p: E \rightarrow X_i \), which is called the link bundle of the stratum. We will later assume these bundles to satisfy flatness conditions.

As we already mentioned in Section \ref{subsection:CollarsMfdsCorners}, Verona observes in \cite{VeronaStratMaps} that Thom-Mather-stratified pseudomanifolds are closely related to manifolds with faces. The Thom-Mather control data corresponds to a system of collars on these manifolds.

As a side remark, we also allude that, by a theorem of Goresky (see \cite{GoreskyTriang}), each \( C^{\infty} \)-Thom-Mather stratified pseudomanifold can be (smoothly) triangulated by a triangulation compatible with the filtration and hence is a PL-pseudomanifold.

\subsection{Flat Fiber Bundles }  \label{subs:bdleprop}
We recall the definition of geometrically flat fiber bundles.
\begin{mydef}((Geometrically) Flat fiber bundles) \label{def:GeomFlat}

	A fiber bundle \( p: E \rightarrow B \) of smooth manifolds with fiber \( L \)  is called flat if there is an atlas \( \mathcal{U} := \{ U_\alpha \}_{\alpha \in I} \) of the bundle such that the corresponding transition functions are locally constant. That means that for the local trivialization maps 
	\( \phi_{\alpha} : p^{-1} (U_\alpha) \stackrel{\cong}{\longrightarrow} U_\alpha \times L \), \( \pi_1 \circ \phi_\alpha = p \), it holds that
	\[ \phi_{\beta} \circ \phi_{\alpha}^{-1} = \id \times g_{\alpha \beta} : (U_{\alpha} \cap U_{\beta}) \times L \rightarrow (U_\alpha \cap U_{\beta}) \times L \]
	with \( g_{\alpha \beta} \in \text{Diffeo} ~ (L) \) if \( U_\alpha \cap U_\beta \) is connected.\\
	A fiber bundle is called geometrically flat if it is flat and if there is a Riemannian metric on the fiber such that the structure group of the bundle is the isometriy group of the link with respect to that metric, i.e. the \( g_{\alpha \beta} \) in the above definition are isometries of \( L \).
\end{mydef}
Note that if the base $B$ is a smooth manifold with boundary, then the same holds for the total space $E$ and the restriction of the bundle to the boundary $ p|: \dE \rightarrow \dB$ is also a fiber bundle with the same flatness properties as the original bundle $p.$

\subsection{The Depth One Setting}\label{subs:DepthOne}
In \cite{BandR}, Banagl investigates
oriented, compact smooth Thom-Mather stratified pseudomanifolds with filtration
\[
	X = X_n \supset X_b = \Sigma
\]
with \( \Sigma^b \) a \( b \)-dimensional connected closed manifold with geometrically flat link bundle. 
That means there is an open neighbourhood \( N \) of \( \Sigma \) in \( X \), such that the boundary of the compact manifold \( M = X - N \) is the total space of a geometrically flat link bundle \( p : \del M \rightarrow \Sigma \) with fiber an oriented, closed smooth 
Riemannian manifold \( L^m \) of dimension \( m = n-1-b \). 
There are two strata in this setting: \( X_b = \Sigma \) and \( X_n - X_b \).

Banagl defines a complex of differential forms \(\OI\) on the nonsingular part \( M \) of \( X \)
using cotruncation in the fiber direction for multiplicatively structured forms on the boundary \( \del M \).\\
The flat link bundle condition allows us to define a complex of multiplicatively structured differential forms on the boundary. Let therefore \( \mathcal{U} := \{U_\alpha\}_{\alpha \in I} \) be a good open cover of \( \Sigma \) such that the bundle trivializes with respect to this cover, 
i.e. for each \( \alpha \in I \) there are diffeomorphisms \( \phi_\alpha : U_\alpha \times L \rightarrow p^{-1} (U_\alpha) \) such that the following diagram commutes:
\[ \begin{tikzcd}
		p^{-1} (U_\alpha) \ar{dr}[swap]{p|} & ~ & 
		U_\alpha \times L \ar{ll}{\cong}[swap]{\phi_\alpha}
		\ar{dl}{\pi_1} \\
		~ & U_\alpha & ~ 
\end{tikzcd}
\]
We are then able to define the following subcomplex of the complex \( \Omega^\bullet (\dM) \) of differential forms on \( \dM \), using the projections \( \pi_1: U_\alpha \times L \rightarrow U_\alpha \) and \( \pi_2: U_\alpha \times L \rightarrow L \):
\[ \begin{split}
	\Omega_{\MS}^\bullet (\Sigma) := \Bigl \{ \omega \in \Omega^\bullet(\del M ) \Bigl | ~ 
	&	\omega|_{p^{-1} (U_\alpha)} = \phi_\alpha^* \sum_j \pi_1^* \eta_j \wedge \pi_2^* \gamma_j  \\
	~&	\text{with} ~ \eta_j \in \Omega^\bullet (U_\alpha), \gamma_j \in \Omega^\bullet (L) \Bigr \}.
\end{split}
\]
These forms can be truncated or cotruncated in the link direction (see \cite[section5]{BandR}) and the mentioned complex \( \OI \) is defined as containing the forms that look like the pullback of a fiberwisely cotruncated multiplicative structured form near \( \del M \) in a collar neighbourhood of the boundary.

The cohomology of that complex then satisfies generalized Poincar\'e-duality over complementary perversities and is isomorphic to the cohomology of the associated intersection space if the link bundle is trivial. For arbitrary (geometrically) flat link bundle, we do not yet know how to construct the intersection space, so there is no cohomology theory coming from a space, we can compare the differential form approach to.

\subsection{Spaces of Stratification Depth Two} \label{subs:3strata}
The aim of the paper is to generalize the above construction to certain classes of pseudomanifolds with stratification depth two. Strictly speaking, we consider smooth Thom-Mather stratified pseudomanifolds \( X \) of dimension \( n \) with filtration \( X = X_n \supset X_b \supset X_s \) with \( n-2 \geq b > s \) and additional conditions on the regular neighbourhoods of the singular strata. The strata here are \( X_s \) and \( X_b - X_s \), which are the singular strata, and \( X_n - X_b \).
We mainly consider zero dimensional bottom strata, i.e. \( s = 0\) and \( X_s = \{ x_0, ... , x_d \} \). 

We consider Thom-Mather-stratified pseudomanifolds \(X\) with filtration
\[
	X=X_n \supset X_b \supset X_0 = \{ x_0, ... , x_d \},
\]
where the bottom stratum is zero dimensional and the middle stratum satisfies a geometrical flatness condition.
To define intersection space cohomology on these stratified pseudomanifolds we first remove a regular neighbourhood \( R_0 \) of \( X_0 \) homeomorphic to \( \mathring{\cone} (L_0) \), with \( L_0 \) a stratified pseudomanifold of dimension \( n-1 \). The result is a stratified pseudomanifold \( X' = X - R_0 \) with boundary and one singular stratum
\[
		B := X'_b = X_b - R_0 \cap X_b,
	\]
	a \( b-\)dimensional compact smooth manifold with boundary \( \dB \).
	We assume that this singular stratum has a geometrically flat link bundle in \( X' \), i.e. there is an open tubular neighbourhood \( T_b\) of \( B \) in \( X'\) such that
	\[
			M := X' - T_b
		\]
		is a smooth \(\langle 2 \rangle \)-manifold with boundary decomposed as
		\[
				\dM = E \cup_{\dE = \dW} W
			\]
such that $E$ is the total space of a geometrically flat link bundle
			\[
					\begin{tikzcd}
								L \ar{r} & E \ar{d}{p} \\
										~ & B
											\end{tikzcd}
										\]
										over the compact base manifold with boundary \( B \) with link a closed smooth Riemannian manifold \( L^m \). We call this $\langle 2 \rangle$-manifold $M$ the blowup of $X$. Note, that it is diffeomorphic as a $\langle 2 \rangle$-manifold to the blowup of \cite{SignaturePackageWitt}, although the authors use a different construction there.
The manifold with boundary \( W \) is 
										\[
												W = L_0 - T_b \cap L_0,
											\]
											with boundary \( \dW = \dE \). It can be seen as the blowup of the pseudomanifold \( L_0 = \partial X' \), the link of \( X_0 \).

\begin{rem}[Thom-Mather control data and Collars]
The control data of the Thom-Mather stratification of a pseudomanifold induces a system of collars on the $\langle 2 \rangle-$manifold $M$. In particular, also a collar of $\dB$ on $B$ is induced such that we get compatible collars on the fiber bundle, as was explained in Corollary \ref{cor:exnicecollars_isolated}. We always work with this system of collars.
\end{rem}

\subsection{Cotruncation Values}\label{subs:CotrValues}
If we have a stratified pseudomanifold \( X \) with filtration \( X = X_n \supset X_b \supset X_0 \) and complementary perversities \( \pp, ~ \qq \), then, unless otherwisely stated, we set \( \dim (L = \text{Link} ~ X_b ) := m := n -1 - b \) and define the cutoff values
\[ \begin{split} K & := m - \pp (m+1) , ~ K' := m - \qq (m+1) ~ \text{and} \\
		L & := n-1 - \pp (n) , ~ L' := n - 1 - \qq (n).
\end{split}
\]
Note, that the dimension of the link $L_0$ of $X_0$ is $\dim L_0 = n-1.$
These cutoff values are the cotruncation degrees for the complexes of multiplicatively structured differential forms near the respective strata.

\section{The method of iterated triangles}
To prove the main Theorem \ref{thm:PD_OI_isolated}, we iterate 5-Lemma arguments involving diagrams containing long exact sequences that are induced by distinguished triangles or short exact sequences. In the setting of this paper we need the intermediate complex $\wOI$, see Section \ref{section:wOI}. The two distinguished triangles \ref{lemma:disttri_tilde_1}, \ref{lemma:disttri_tilde_2} relate the absolute and relative $\wOI$-complexes to the absolute and relative complexes of differential forms on the blowup $M$ of $X$ and complexes of fiberwisely (co)truncated multiplicatively structured forms on the boundary part $E$. A 5-Lemma argument proves Poincar\'e-Lefschetz duality for the cohomology of $\wOI$, using the Poincar\'e-Lefschetz statements for forms on $M$ and the fiberwisely (co)truncated multiplicatively structured forms on $E$. The short exact sequences of Lemma \ref{lemma:disttriOI1} then relate $\OI$ to the absolute and relative $\wOI-$complexes and the complexes of (co)truncated $\OI$-forms on the boundary part $W$, which can be seen as the blowup of the link of $X_0$. The Poincar\'e duality for $HI$ can then be deduced with a 5-Lemma argument, using the Poincar\'e-Lefschetz duality of $\wOI$ and the Poincar\'e duality of $HI_{\pp}^\bullet(W)$, which follows from \cite[Theorem 8.2]{BandR}.

Note that we use the triangle and 5-Lemma argument twice, where two is also the stratification depth of $X$ and hence the number of boundary parts of the blowup. It might be possible, that an analogous construction might help to prove Poincar\'e duality for $HI$ for pseudomanifolds of greater stratification depth with one pair of distinguished triangles for each singular stratum. In analogy to our setting, these triangles would relate a chain of intermediate complexes and certain complexes on the boundary parts of the respective singular strata.

One problem of pseudomanifolds of greater stratification depth (or more difficult bottom stratum) does not occur in this paper: If the fiber of a link bundle is a pseudomanifold itself, and one wants to work with multiplicatively structured forms on the total space of that bundle, one must cotruncate the complex of $\OI$-forms on this fiber such that the resulting complex is compatible with the transistion functions. Since the bottom stratum is assumed to be zero dimensional in this paper, we can use any cotruncation of the $\OI$ complex of the link of the bottom stratum.

\section{The Partial de Rham Intersection Complex 
} \label{section:wOI}
We define the intermediate complexes
\(
	\wOI (M)	
\)
and \( \wOI (M, C_W) \). They consist of forms whose restriction to \( C_E \) is the pullback of a fiberwisely cotruncated form on \( E \) and whose restriction to \( C_W \) is either the pullback of some form on \( W \) or zero for the relative group. We show that the corresponding cohomology groups \( H^r \bigl ( \wOI (M) \bigr ) \) and \( H^{n-r} \bigl ( \wOIq (M, C_W) \bigr ) \) are Poincar\'e-Lefschetz dual to each other, see Theorem \ref{prop:PDonwOI}.

Not till then we define the actual complex of intersection space forms on \(M\), \( \OI \), and show Poincar\'e duality for it.

\vspace{0.2cm}
Before we give the definitions of \( \wOI (M) \) and \( \wOI (M,C_W) \) we recall the definitions of the complex of multiplicatively structured forms as well as the complex of fiberwisely truncated and cotruncated multiplicatively structured forms from \cite[Sections 3 and 6]{BandR}:

\begin{mydef}[Multiplicatively structured forms] \label{def:ms}
	Let \( p: E \rightarrow B \) be a flat bundle with base \( B \) a compact manifold with boundary \( \dB \) and fiber a Riemannian manifold \( L \) and let \( \mathcal{U} = \{ U_{\alpha} \}_{\alpha \in I} \) be a good open cover of \( B \) such that the bundle trivializes with respect to that cover. Let further \( U \subset B \) be open. We then define
	\begin{equation} \label{eq:OMS} \begin{split}
			\OMS (U)  := \bigl \{ \omega \in \Omega^\bullet (p^{-1} (U)) \bigl | & \forall \alpha \in I: \omega|_{p^{-1} (U_\alpha)}  = \phi_{\alpha}^* \sum_{j_\alpha} \pi_1^* \eta_{j_\alpha} \wedge \pi_2^* \gamma_{j_\alpha} \\
				& \text{with} ~ \eta_{j_{\alpha}} \in \Omega^\bullet (U \cap U_{\alpha} ), ~ \gamma_{j_\alpha} \in \Omega^\bullet (L) \bigr \} 
	\end{split}
	\end{equation}
	Here, the \( \phi_\alpha : p^{-1} (U_\alpha) \xrightarrow{\cong} U_\alpha \times L \) denote the local trivializations of the bundle.
\end{mydef}

To define the complexes of fiberwisely truncated and cotruncated multiplicatively structured forms we need the complexes of truncated and cotruncated forms of the closed (Riemannian) manifold \( L \) from \cite[Section 4]{BandR}.
\begin{mydef}[Fiberwisely (co)truncated forms] \label{def:fiberwisetrun}
	Let \( p: E \rightarrow B \) be a flat bundle with base \( B \) a compact manifold with boundary \( \dB \) and fiber a closed manifold \( L \) and let \( \mathcal{U} = \{ U_{\alpha} \}_{\alpha \in I} \) be a good open cover of \( B \) such that the bundle trivializes with respect to that cover as in the previous definition. Let further \( U \subset B \) be open. We then define, for any integer \( K \), the complex of (in degree \( K \)) fiberwisely truncated multiplicatively structured forms by
	\[ \begin{split}
			\ftsOMS (U)  := \bigl \{ \omega \in \OMS (U)  
				\bigl | & \forall \alpha \in I: 
				\omega|_{p^{-1} (U_\alpha)}  = ~ \phi_{\alpha}^* \sum_{j_\alpha} \pi_1^* \eta_{j_\alpha} \wedge \pi_2^* \gamma_{j_\alpha} \\
				% \phi_{\alpha}^* \sum_{j_\alpha} \pi_1^* \eta_{j_\alpha} \wedge \pi_2^* \gamma_{j_\alpha} \\
				& \text{with} ~ %\eta_{j_{\alpha}} \in \Omega^\bullet (U \cap U_{\alpha} ), ~ 
				\gamma_{j_\alpha} \in \tau_{<K} \Omega^\bullet (L) \bigr \} 
	\end{split}
\]
If the fiber is a Riemannian manifold and the bundle is geometrically flat, we moreover define the complex of fiberwisely cotruncated multiplicatively structured forms by
	\[ \begin{split}
			\ftgOMS (U) := \bigl \{ \omega \in \OMS (U)  
				\bigl | & \forall \alpha \in I: 
				\omega|_{p^{-1} (U_\alpha)}  = ~ \phi_{\alpha}^* \sum_{j_\alpha} \pi_1^* \eta_{j_\alpha} \wedge \pi_2^* \gamma_{j_\alpha} \\
				% \psi_{\alpha}^* \sum_{j_\alpha} \pi_1^* \eta_{j_\alpha} \wedge \pi_2^* \gamma_{j_\alpha} \\
				& \text{with} ~  \gamma_{j_\alpha} \in \tau_{\geq K} \Omega^\bullet (L) \bigr \}.
	\end{split}
\]

\end{mydef}

All of these complexes \( \OMS (U), \ftsOMS (U),\) and \( \ftgOMS (U) \), for \( U \subset B \) open, are subcomplexes of the complex of forms \( \Omega^\bullet \bigl  (p^{-1} (U) \bigr ) \). Note, that we need the geometrical flatness condition to define the complexes $\ftgOMS (U)$ but not $\ftsOMS (U)$, since for any smooth map $f: L \to L$ it holds that the pullback of a boundary is a boundary, $f^* (d\alpha) = d (f^* \alpha)~$ for all $\alpha \in \Omega^\bullet (L)$, but the pullback of a coclosed form is not always coclosed, although this holds if $f$ is an isometry. See \cite[Lemma 4.5]{BandR} for more details.

\vspace{1ex}
Before we define the intermediate intersection form complex $\wOI (M),$ we recall the notation of the collars on the $\langle 2 \rangle$-manifold $M$. The boundary $\dM$ of $M$ consists of the two compact manifolds $E, \, W$ with common boundary $\dE = \dW$. Each of this boundary parts comes with a collar map
\begin{align*}
c_E: E \times [0,1) & \hookrightarrow M, \\
c_W: W \times [0,1) & \hookrightarrow M
\end{align*}
such that $c_E|_{\dE}$ is a collar of $\dW$ in $W$ and $c_{W}|_{\dW}$ is a collar of $\dE$ in $E$. We denote the images of these collars by the capital C's, $C_E = \im c_E$ and $C_W = \im c_W$ and call them the collar neighbourhoods of the respective boundary parts. Moreover, we denote by \( \pi_E, ~ \pi_W \) the projections $ \pi_E :  E \times [0,1) \rightarrow  E,$ $\pi_W: W \times [0,1) \rightarrow W. $
See Section \ref{subsection:CollarsMfdsCorners} for more details.

\begin{mydef}[The Partial de Rham Intersection Complex] \label{mydef:parOI}
Let $K := m - \pp (m), ~ m = \dim (L)$ be the middle cotruncation value, as defined in Section \ref{subs:CotrValues}. We then define the intermediate intersection form complex as follows.
	        \[
		\begin{split}
	                \widetilde{\Omega I}^\bullet_{\pp} (M) := \bigl \{ \omega \in \Omega^\bullet (M) \bigl | 
			      &	c_E^* \omega = \pi_E^* \eta  \text{ for some} ~ \eta \in ft_{\geq K} \Omega^\bullet_{\MS} (B)  \\
			      & c_W^* \omega = \pi_W^* \rho \text{ for some} ~  \rho \in \Omega^\bullet (W) \bigr \} \\
		\end{split}
		\]
This is a subcomplex of the complex \( \Omega^\bullet (M) \) of forms on \( M \).
\end{mydef}

\begin{mydef}[The relative Partial de Rham Intersection Complex] \label{mydef:relparOI}
	 \[
	 \widetilde{ \Omega I }^\bullet_{\pp} (M,C_W) := \bigl \{ \omega \in \widetilde{ \Omega I}^\bullet_{\pp} (M) \bigl | c_W^* \omega = 0 \bigr \}	\subset \Omega^\bullet (M, C_W).
	 \]
\end{mydef}
In the rest of this section we prove Poincar\'e duality between \( \widetilde{ \Omega I}^\bullet_{\pp} (M) \) and \( \widetilde{ \Omega I}^\bullet_{\qq} (M,C_W) \) for complementary perversities $\pp$ and $\qq$. Recall that two perversities are called perversities, if $\pp (k) + \qq (k) = k-2 $ for all $k.$

\vspace{0.2cm}
\noindent
\textbf{Theorem \ref{prop:PDonwOI}} \emph{(Poincar\'e Duality for Partial Intersection Forms)} \\
\emph{For complementary perversities \( \pp \) and \( \qq \), integration induces a nondegenerate bilinear form}
	\[
	\begin{split}
		\widetilde{HI}^r_{\pp}(M) \times \widetilde{HI}^{n-r}_{\qq} (M,C_W)  &~ \rightarrow \R \\
		([\omega], [\eta] ) & ~ \mapsto \int_M \omega \wedge \eta ,\\
	\end{split}
\]
\emph{where} \(\widetilde{HI}^r_{\pp}(M) := H^r(\widetilde{\Omega I}^\bullet_{\pp} (M)) \) \emph{and} \(  \widetilde{HI}^{n-r}_{\qq} (M,C_W) := H^{n-r} (\widetilde{ \Omega I}^\bullet_{\qq} (M,C_W) )  \).

\subsection{Multiplicative Forms that are constant near the end}
The proof of the Theorem \ref{prop:PDonwOI} is geared on the proof of Poincar\'e duality for intersection forms in the two strata case, see \cite[Sect. 8]{BandR}. 
However, the additional stratum produces additional technical difficulties, as one might have expected.
We first deal with the fact that in \(\widetilde{\Omega I}_{\pp}^\bullet \) we do not just demand that the forms restricted to a collar neighbourhood of \(E\) come from a form in \( ft_{\geq K} \Omega_{\MS}^\bullet (B) \) but also that they are constant in the collar direction in a collar neighbourhood of \( W \).
\begin{mydef}[Fiberwise cotruncated forms that are in \( \Omega_{\partial c}^\bullet (E) \)]\label{def:Pcomplexes}
	We recall that the collar \( c_{\partial E}: \dE \times [0,1) \hookrightarrow E \) of \( \dE \) in \( E \) is the restriction of the collar of \( W \) in \( M \), \( c_{\dE} = c_W|_{\dW \times [0,1)} \), and  define 
	\[
		P^\bullet (B) := \{ \omega \in \Omega_{\MS}^\bullet (B) | ~ \exists~ \eta \in \Omega_{\MS}^\bullet (\dB): ~c_{\dE}^* \omega = \pi_{\dE}^* \eta \}.
	\]
Analogously, we define
\[
	P_{\geq K}^\bullet (B) := \{ \omega \in ft_{\geq K}\Omega_{\MS}^\bullet (B) |~ \exists ~\eta \in ft_{\geq K}\Omega_{\MS}^\bullet (\dB):~ c_{\dE}^* \omega = \pi_{\dE}^* \eta  \}
	\]
and
\[
	P_{<K}^\bullet (B) := \{ \omega \in ft_{<K}\Omega_{\MS}^\bullet (B) |~ \exists ~ \eta \in ft_{<K} \Omega_{\MS}^\bullet (\dB): ~ c_{\dE}^* \omega = \pi_{\dE}^* \eta  \}.
	\]
\end{mydef}
We want to show that those complexes are quasi-isomorphic to the analogous complexes without the condition at the end of the manifold. 
The following lemma shows, that argument of the proof of \cite[Prop. 2.4]{BandR} is applicable to multiplicatively structured forms as well.
The proof there uses integration of the forms on the collar,
$\rho: \Omega^\bullet (E) \rightarrow \Omega_{\partial C}^\bullet (E)$,
with $\Omega^\bullet_{\partial C} = \left\{ \omega \in \Omega^\bullet (E) | \exists \eta \in \Omega^\bullet (\dE):~ c_{\dE}^* \omega = \pi_{\dE}^* \eta \right\}.$ We recall the exact definition in the proof below.

\begin{lemma}
	\label{lemma:MSdc}
	The subcomplex inclusions \( i: P^\bullet (B) \hookrightarrow \Omega_{\MS}^\bullet(B) \), \( i_{\geq K}: P_{\geq K}^\bullet (B) \hookrightarrow \ftgOMS (B) \), \( i_{<K}: P_{<K}^\bullet (B) \hookrightarrow \ftsOMS (B) \) are quasi-isomorphisms.
\end{lemma}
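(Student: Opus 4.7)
The plan is to adapt the retraction argument of \cite[Prop.~2.4]{BandR} to the multiplicatively structured setting, verifying simultaneously that the same retraction preserves both the fiberwise truncation and the fiberwise cotruncation. I will construct a chain map $\rho: \OMS(B) \to P^\bullet(B)$ together with a chain homotopy $h$ satisfying $\rho = \id - dh - hd$ and $\rho \circ i = \id_{P^\bullet}$, such that both $\rho$ and $h$ restrict to operators on $\ftgOMS(B)$ and $\ftsOMS(B)$ landing in $P^\bullet_{\geq K}(B)$ and $P^\bullet_{<K}(B)$ respectively; this will prove all three quasi-isomorphism statements simultaneously.

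Fix a smooth cutoff $\varphi: [0,1) \to [0,1]$ with $\varphi \equiv 1$ near $0$ and $\varphi \equiv 0$ away from a small sub-interval, and let $K$ be the Poincaré homotopy operator on $\dE \times [0,1)$: decomposing $\alpha = \alpha_0 + dt \wedge \alpha_1$ with $\alpha_0, \alpha_1$ having no $dt$-component, set $(K\alpha)(y,t) := \int_0^t \alpha_1(y,s)\, ds$. With $i_0: \dE \hookrightarrow \dE \times [0,1)$ the zero-section inclusion, one has the standard identity $dK + Kd = \id - \pi_{\dE}^* i_0^*$. Define $h\omega \in \Omega^\bullet(E)$ to be the pushforward under $c_{\dE}$ of $\varphi \cdot K(c_{\dE}^* \omega)$, extended by zero on $E \setminus C_{\dE}$, and set
\[
\rho(\omega) := \omega - d(h\omega) - h(d\omega).
\]
Pulling back the identity by $c_{\dE}$ yields $c_{\dE}^* \rho(\omega) = \pi_{\dE}^* (i_0^* c_{\dE}^* \omega)$ on the region $\{\varphi \equiv 1\}$, so after a suitable choice of (sub-)collar $\rho(\omega)$ lies in $P^\bullet(B)$; note that $i_0^* c_{\dE}^* \omega$ inherits a multiplicative structure from $\omega$ because restriction to $\dE$ preserves the local trivialization data of the bundle. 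The identity $\rho = \id - dh - hd$ shows that $\rho$ is a chain map with $i \circ \rho \simeq \id$ via $h$, while for $\omega \in P^\bullet(B)$ we have $c_{\dE}^* \omega = \pi_{\dE}^* \eta$ with no $dt$-component, so $K(c_{\dE}^* \omega) = 0$ and $\rho \circ i = \id$.

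The main obstacle, which does not arise in the two-strata case of \cite{BandR}, is checking that $h$ and hence $\rho$ preserve the multiplicative structure and the (co)truncation conditions. Here I invoke Remark \ref{rem:prelColTriv}: since the bundle is flat and the collars $c_{\dE}$, $c_{\dB}$ are $p$-related, on each trivializing chart the map $\pi_2 \circ \phi_\alpha \circ c_{\dE}$ is independent of the collar parameter $t$. Consequently, writing $\omega|_{p^{-1}(U_\alpha)} = \phi_\alpha^* \sum_{j} \pi_1^* \eta_{j_\alpha} \wedge \pi_2^* \gamma_{j_\alpha}$, the pullback $c_{\dE}^* \phi_\alpha^* \pi_2^* \gamma_{j_\alpha}$ has no $dt$-component and is independent of $t$, while $c_{\dE}^* \phi_\alpha^* \pi_1^* \eta_{j_\alpha}$ is the pullback of $c_{\dB}^* \eta_{j_\alpha}$ under $p \times \id$. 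Thus $K$ acts only on the base factor, so $h\omega$ and $\rho(\omega)$ are again multiplicatively structured, with the fiber factors $\gamma_{j_\alpha}$ left unchanged; in particular the conditions $\gamma_{j_\alpha} \in \tau_{<K} \Omega^\bullet(L)$ and $\gamma_{j_\alpha} \in \tau_{\geq K} \Omega^\bullet(L)$ are automatically preserved. Multi-chart compatibility follows because the flat transition functions act only on the fiber factor, which $h$ leaves invariant, so all three statements are established at once.
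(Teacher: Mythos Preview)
Your approach is essentially the paper's own: adapt the retraction of \cite[Prop.~2.4]{BandR} and verify, via Remark~\ref{rem:prelColTriv} and the $p$-related collars, that the homotopy operator leaves the fiber factors $\gamma_{j_\alpha}$ untouched so that both the multiplicative structure and the (co)truncation conditions are preserved.

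Two spots need more care. First, your cutoff $\varphi$ is $1$ only near $t=0$, so $\rho(\omega)$ lies in $P^\bullet(B)$ only with respect to a \emph{sub}collar; but $P^\bullet(B)$ is defined using the fixed collar $c_{\dE}$, so as written you have not landed in the right complex. The paper handles this by first slightly \emph{extending} the collar to $\widetilde{c}_{\dE}:\dE\times[0,\tfrac32)\hookrightarrow E$ and choosing the cutoff $\xi$ with $\xi|_{[0,1]}\equiv 1$ and compact support in $[0,\tfrac32)$; then $\rho(\omega)$ is genuinely collar-constant on the original $[0,1)$-collar. Second, your one-line ``multi-chart compatibility'' argument is too quick: the homotopy $K$ is a global operator on $\dE\times[0,1)$, and you must check that the integrated form, read off in one chart $U_\beta$, is again a finite sum $\phi_\beta^*\sum \pi_1^*(\ldots)\wedge\pi_2^*(\ldots)$. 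The paper does this explicitly by pulling back a partition of unity $\{\rho_\alpha\}$ on $\dB$ subordinate to the $W_\alpha$ of the small-collar structure, writing $\omega_1=\sum_\alpha p^*\overline{\rho}_\alpha\,\omega_1$, integrating term by term, and then using the flat transition functions $\id\times g_{\alpha\beta}$ to re-express each summand in the $\beta$-chart. Without this step it is not clear that the integrated term is multiplicatively structured globally rather than only locally.
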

\begin{proof} 
We give a proof for the non-truncated case that transfers literally to the truncated and cotruncated one.
Take a slight extension of the $p-$related collars $c_{\dE}$ and $c_{\dB}$ to a pair $\widetilde{c}_{\dE} : \dE \times [0, \frac{3}{2}) \hookrightarrow E, ~\widetilde{c}_{\dB} : \dB \times [0, \frac{3}{2}) \hookrightarrow B$ of collars which are still $p-$related and such that $\widetilde{c}_{\dB}$ is still a small collar. Let $\widetilde{\pi}_{\dE}: \dE \times [0, \frac{3}{2}) \to \dE$ and $\widetilde{\pi}_{\dB}: \dB \times [0, \frac{3}{2}) \to \dB$ denote the corresponding projections on the first factors and 
$\text{proj}_2: \dB \times [0, \frac{3}{2}) \to [0, \frac{3}{2})$ the second factor projection.

Define a smooth cutoff function $ \xi: [0, \frac{3}{2} ) \rightarrow \R$ with compact support and $\xi|_{[0,1]} = 1.$ This induces a cutoff function $p^* (\widetilde{c}_{\dB}^{-1})^* \text{proj}_2^* \xi $ on $E$,
which we also denote by $\xi$. Trivially, this is a multiplicative function. Let $\omega \in \Omega^\bullet (E)$ be any form. Then $\widetilde{c}_{\dE}^* \omega$ decomposes as $\omega_0 + dt \wedge \omega_1,$ with $\omega_0 (t), \omega_1 (t) \in \Omega^\bullet (\dE)$ for each $t \in [0, \frac{3}{2} ).$ Let $a \in (0,1)$ and define a map $\rho: \Omega^\bullet (E) \rightarrow \Omega_{\partial C}^\bullet (E)$, 
where the latter is the subcomplex of forms $\omega$ with $c_{\dE}^* \omega $ constant in the collar direction, by
\[ \rho (\omega) = (1 - \xi) \omega + \xi ({\widetilde{c}_{\dE}}^{-1})^* \pi_1^* \omega_0 (a) - d \xi \wedge (\widetilde{c}_{\dE}^{-1})^* \int_a^t \omega_1 \, dt. \]
By the argument of Banagl, this map is a chain homotopy equivalence with homotopy inverse the subcomplex inclusion  $\Omega_{\partial C}^\bullet (E) \hookrightarrow \Omega^\bullet (E).$ The homotopy is given by
\( K ( \omega ) = \xi \,(\widetilde{c}_{\dE}^{-1})^* \int_a^t \omega_1 \, dt. \) 
We prove that $\rho$ and $K$ restrict to complexes of multiplicatively structured forms in the following.

By our choice of $\xi,$ this can be achieved by proving that for $\omega_1$ with $(\widetilde{c}_{\dE}^{-1})^* \omega_1$ multiplicative in the above decomposition, integration yields a multiplicative form $ (\widetilde{c}_{\dE}^{-1})^* \int_a^t \omega_1 \, dt.$
So let $\omega_1 = c_{\dE}^* \widetilde{\omega}_1,$ with $\widetilde{\omega}_1$ a multiplicatively structured form. 
Recall, that we work with a collar that is small with respect to the (finite) open cover $\mathcal{U} = \left\{ U_\alpha \right\}_{\alpha \in I}$ with respect to which the bundle trivializes, see Definition \ref{def:small_collar} and Lemma \ref{lemma:widthcollar}. Let $I_\partial $ and the $W_{\alpha} $ be as in the aforesaid definition. Let $\left\{ \rho_\alpha \right\}_{\alpha \in I_\partial}$ be a partition of unity on $\dB$ with respect to the cover $\left\{ W_\alpha \right\}_{\alpha \in I_\partial}.$ This gives a partition 
of unity $ \left\{ \overline{\rho}_\alpha:= (\widetilde{c}_{\dB}^{-1})^* \widetilde{\pi}_{\dB}^* \rho_\alpha \right\}_{\alpha \in I_\partial}$ of the collar of $\dB$ in $B.$  
Since $\widetilde{c}_{\dB} \left( W_\alpha \times [0, \frac{3}{2}) \right) \subset U_\alpha$ and $\text{supp} (\rho_\alpha) \subset W_\alpha,$ the support of $\overline{\rho}_\alpha$ is contained in $U_\alpha$.
Hence $\omega_1 = \sum_{\alpha} p^* \overline{\rho}_\alpha \omega_1.$ By assumption, we can write $\omega_1$ as follows.
\[ \omega_1 = \widetilde{c}_{\dE}^{*} \sum_{\alpha \in I_\partial} \phi_\alpha^* \sum_{j_\alpha} \pi_1^* (\overline{\rho}_\alpha \eta_{j_\alpha} ) \wedge \pi_2^* \gamma_{j_\alpha}. \]
This can be reformulated by Remark \ref{rem:prelColTriv}, since we use $p-$related collars $\widetilde{c}_{\dE}$ and $\widetilde{c}_{\dB}$ with $c_{\dB}$ small.
\[ \omega_1 = \sum_{\alpha \in I_\partial} \sum_{j_\alpha} \underbrace{\widetilde{c}_{\dE}^{*}~ \phi_\alpha^*~ \pi_1^*}_{= \widetilde{c}_{\dE}^*~ p^* = (p \times \id)^* ~ \widetilde{c}_{\dB}^*} (\overline{\rho}_\alpha \eta_{j_\alpha} ) \wedge \underbrace{\widetilde{c}_{\dE}^{*}~ \phi_\alpha^*~ \pi_2^*~ \gamma_{j_\alpha}}_{\text{independent of}~t}. \]
Hence, if we integrate $\omega_1 \in \Omega^\bullet \left( \dE \times [0,1) \right)$ along the collar coordinate, we can move integration to the first factor of the wedge product.
\[ \int_a^t \omega_1 (\tau) \, d\tau = \sum_{\alpha \in I_\partial} \sum_{j_\alpha}  (p \times \id)^* \left( \widetilde{\pi}_{\dB}^* \rho_\alpha \int_a^t (\widetilde{c}_{\dB}^* ~\eta_{j_\alpha}) (\tau) \, d\tau \right) \wedge \widetilde{c}_{\dE}^{*}~ \phi_\alpha^* \pi_2^* \gamma_{j_\alpha}.\]
The pullback of this form under $\widetilde{c}_{\dE}^{-1}$ is indeed multiplicatively structured:
For any $\beta \in I,$ let $\phi_\beta: p^{-1} (U_\beta) \to U_\beta \times L$ be the local trivialization of the bundle. We then have the following commutative diagram, where the $W_\beta$ are as in Definition \ref{def:small_collar}.
\[
\begin{tikzcd}
p^{-1} (W_\beta) \times [0,\frac{3}{2}) \ar[hookrightarrow]{r}{\widetilde{c}_{\dE}} \ar{d}{p \times \id} & p^{-1} (U_\beta) \ar[shift left=0.5ex]{rr}{\phi_\beta} \ar{dr}[swap]{p} & \ &  U_\beta \times L \ar[shift left=0.5ex]{ll}{\phi_\beta^{-1}} \ar{dl}{\pi_1} \\
W_\beta \times [0,\frac{3}{2}) \ar[hookrightarrow]{rr}{\widetilde{c}_{\dB}} & \ & U_\beta & \
\end{tikzcd}
\]
This gives the relation $(p \times \id) \circ \widetilde{c}_{\dE}^{-1} \circ \phi_\beta^{-1} = \widetilde{c}_{\dB}^{-1} \circ p \circ \phi_\beta^{-1} = \widetilde{c}_{\dB}^{-1} \pi_1$.
Now, we restrict $(\widetilde{c}_{\dE}^{-1})^* \int_a^t \omega_1 (\tau) \, d\tau $ to $p^{-1} (U_\beta)$ and use the transition functions $\rho_{\alpha \beta} = \id \times g_{\alpha \beta}: (U_\alpha \cap U_\beta) \times L \to (U_\alpha \cap U_\beta) \times L$ of the flat bundle. 
\begin{align*}
& ( \widetilde{c}_{\dE}^{-1})^* \int_a^t \omega_1 (\tau) \, d\tau ~\Bigr|_{p^{-1} (U_\beta)} 
= \phi_\beta^*  (\phi_\beta^{-1})^* (\widetilde{c}_{\dE}^{-1})^* \int_a^t \omega_1 (\tau) \, d\tau \\
= & ~\phi_\beta^* \sum_{\alpha \in I_\partial} \sum_{j_\alpha} \overbrace{ (\phi_\beta^{-1})^* (\widetilde{c}_{\dE}^{-1})^* (p \times \id)^*}^{= \pi_1^* (\widetilde{c}_{\dB}^{-1})^*} \left( \widetilde{\pi}_{\dB}^* \rho_\alpha \int_a^t (\widetilde{c}_{\dB}^* ~\eta_{j_\alpha}) (\tau) \, d\tau \right) \\
& \hspace{5ex} \wedge \underbrace{(\phi_\beta^{-1})^* ~ \phi_\alpha^* \pi_2^*}_{= \rho_{\alpha \beta}^* \pi_2^* = \pi_2^* g_{\alpha \beta}^*} \gamma_{j_\alpha}
= \phi_\beta^* \sum \pi_1^* (\ldots) \wedge \pi_2^* (\ldots).
\end{align*}
This shows that $(\widetilde{c}_{\dE}^{-1})^* \int_a^t \omega_1 (\tau) \, d\tau$ is multiplicatively structured and hence $\rho: \OMS (B) \to P^\bullet (B)$ is a chain homotopy equivalence with inverse the subcomplex inclusion $i: P^\bullet (B) \hookrightarrow \OMS (B).$
\end{proof}
\subsection{Two Distinguished Triangles for \texorpdfstring{$ \wOI $}{partial Omega I}}
To prove Theorem \ref{prop:PDonwOI}, we use a five lemma argument and therefore need two distinguished triangles in \( \mathcal{D} ( \R ) \), the derived category over the reals.
\begin{mydef}[Forms that are multiplicative near E] \label{def:EMS}
	\[
		\Omega_{E \MS}^r (M) := \bigl \{ \omega \in \Omega^r (M) \bigl | ~ \exists ~ \eta \in \Omega^\bullet_{\MS}( B ): ~ c_E^* \omega = \pi_E^* \eta \bigr \}.
	\]
\end{mydef}

\begin{lemma} \label{lemma:disttri_tilde_1}
	In \( \mathcal{D} ( \R ) \), the derived category of complexes of real vector spaces, there is a distinguished triangle 
\begin{equation}
	\label{disttri_1}
	\widetilde{\Omega I}_{\pp}^\bullet (M) 
	\rightarrow
	\Omega_{E \MS}^\bullet (M)
	\rightarrow
	ft_{<K} \Omega_{\MS}^\bullet (B) 
	\rightarrow \wOI (M)[+1] 
\end{equation}
\end{lemma}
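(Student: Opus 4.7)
The plan is to realize the distinguished triangle from a short exact sequence of chain complexes, after replacing $\wOI(M)$ on the left by a quasi-isomorphic intermediate complex. Define
\[
K^\bullet := \bigl\{ \omega \in \OEMS(M) \bigm| c_E^* \omega = \pi_E^* \eta \text{ for some } \eta \in \ftgOMS(B) \bigr\},
\]
which is precisely $\wOI(M)$ stripped of its $W$-collar constancy condition. Thus the plan has two stages: (i) produce an honest short exact sequence $0 \to K^\bullet \to \OEMS(M) \to \ftsOMS(B) \to 0$, and (ii) prove that the subcomplex inclusion $\wOI(M) \hookrightarrow K^\bullet$ is a quasi-isomorphism. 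Once both are in hand, the standard passage from a short exact sequence to its distinguished triangle in $\mathcal{D}(\R)$, combined with the quasi-isomorphism $\wOI(M) \xrightarrow{\sim} K^\bullet$, yields the desired triangle.

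For stage (i), define $r : \OEMS(M) \to \ftsOMS(B)$ as follows. Every $\omega \in \OEMS(M)$ determines a unique $\eta \in \OMS(B)$ with $c_E^* \omega = \pi_E^* \eta$. The fiberwise Hodge decomposition of \cite[Section 4]{BandR}, extended to multiplicatively structured forms using the geometric flatness of the link bundle together with Remark \ref{rem:prelColTriv}, splits this as $\eta = \eta_{<K} + \eta_{\geq K}$, and I set $r(\omega) := \eta_{<K}$. Since the fiberwise Hodge projector commutes with $d$ (and with the pullback maps of the isometric transition functions), $r$ is a chain map, and its kernel is by construction $K^\bullet$. Surjectivity follows because given $\eta \in \ftsOMS(B)$, I can push $\pi_E^* \eta$ forward along a slight extension of the collar $c_E$ and then multiply by a cutoff $\chi$ that equals $1$ on all of $C_E$ and vanishes before the end of the extended collar; the resulting form lies in $\OEMS(M)$ and maps to $\eta$ under $r$.

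Stage (ii) is the main obstacle. The strategy is to adapt the chain homotopy equivalence from Lemma \ref{lemma:MSdc} to straighten forms in the $W$-collar direction rather than the $\partial E$-collar direction. For $\omega \in K^\bullet$, write $c_W^* \omega = \omega_0(t) + dt \wedge \omega_1(t)$ with $\omega_i(t) \in \Omega^\bullet(W)$, fix $a \in (0,1)$ and a suitable cutoff $\xi_W$ in the $W$-collar coordinate, and set
\[
\rho_W(\omega) = (1-\xi_W)\omega + \xi_W \cdot (c_W^{-1})^* \pi_W^* \omega_0(a) - d\xi_W \wedge (c_W^{-1})^* \int_a^t \omega_1 \, d\tau.
\]
The delicate point is to verify that $\rho_W(\omega)$ still satisfies the $E$-cotruncation condition, i.e.\ that modifying $\omega$ on $C_W$ does not destroy the pullback form on the overlap $C_E \cap C_W$. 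This relies on the compatible collar system of Section \ref{subsection:CollarsMfdsCorners}: the identities $c_W|_{\dW \times [0,1)} = c_{\dE}$ and $c_E|_{\dE \times [0,1)} = c_{\dW}$ mean that near the corner the two collar directions interact via a product structure, so that $c_E^*$ commutes with the integration and cutoff operations defining $\rho_W$. Because $\omega \in K^\bullet$ is already constant in the $E$-collar coordinate and multiplicatively structured on $C_E$, this commutation preserves the identity $c_E^* \omega = \pi_E^* \eta$ on $C_E$. A standard homotopy argument, formally identical to the one in the proof of Lemma \ref{lemma:MSdc}, shows that $\rho_W$ is a chain homotopy inverse to the inclusion $\wOI(M) \hookrightarrow K^\bullet$, and combining this with the short exact sequence of stage (i) produces the required distinguished triangle.
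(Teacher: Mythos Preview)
Your approach is correct and genuinely different from the paper's. The paper keeps $\wOI(M)$ on the left of the short exact sequence and does all the work on the quotient side: it identifies $\OEMS(M)/\wOI(M)$ with $\ftsOMS(B)$ only in $\mathcal{D}(\R)$, via a zigzag passing through $\OMS(B)/P_{\geq K}^\bullet(B)$ and $\OMS(B)/\ftgOMS(B)$ (using Lemma~\ref{lemma:MSdc} on the boundary $E$ and the quasi-isomorphism $\gamma_B$ of \cite[Lemma 6.7]{BandR}). You instead modify the \emph{left} term, inserting the intermediate complex $K^\bullet$ so that the quotient is literally $\ftsOMS(B)$, and then separately prove $\wOI(M) \hookrightarrow K^\bullet$ is a quasi-isomorphism by straightening in the $W$-collar direction on $M$. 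Your route avoids the zigzag but rests on the global direct-sum splitting $\OMS(B) = \ftsOMS(B) \oplus \ftgOMS(B)$, which is true for geometrically flat bundles (the fiberwise Hodge projector commutes with the isometric transition maps) but is stronger than what \cite{BandR} actually records; you should state and justify it explicitly rather than cite Remark~\ref{rem:prelColTriv}, which concerns collar coordinates and is not the relevant input here.

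One correction in stage (ii): the operator $\rho_W$ \emph{does} alter $\omega$ on the corner $C_E \cap C_W$, so it is not true that ``this commutation preserves the identity $c_E^* \omega = \pi_E^* \eta$.'' What happens is that $c_E^* \rho_W(\omega) = \pi_E^* \eta'$ with $\eta' = \rho_{\dE}(\eta)$, the form obtained by applying the analogous straightening to $\eta$ along the $\dE$-collar in $E$. The point you need is that $\rho_{\dE}(\eta)$ still lies in $\ftgOMS(B)$, and that is precisely the content of Lemma~\ref{lemma:MSdc}. So your argument does invoke that lemma after all, just transported from $E$ up to $M$ rather than applied on the quotient as in the paper.
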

\begin{proof} There is a short exact sequence 
\begin{equation}
	\label{eqn:ses_dtr}
	0 \rightarrow \widetilde{\Omega I}_{\pp}^\bullet (M) \rightarrow \Omega_{E \MS}^\bullet (M) \rightarrow Q_E^\bullet(M) := \frac{\Omega_{E \MS}^\bullet (M)}{\widetilde{\Omega I}_{\pp}^\bullet (M) } \rightarrow 0
\end{equation}
We have to show that there is a quasi-isomorphism \( Q_E^\bullet (M) \rightarrow ft_{<K} \Omega_{\MS}^\bullet (B) \). Let \( \sigma_E: E \hookrightarrow E \times [0,1) \) be the inclusion at \( 0 \). 
Then the map \( J_E := c_E \circ \sigma_E \) induces maps
\[
	\begin{split}
		J_E^* &: 	\Omega_{E \MS}^\bullet (M) 
		\stackrel{c_E^*}{\twoheadrightarrow}
		\Omega_{E \MS}^\bullet (E \times [0,1)) \xrightarrow[\cong]{\sigma_E^*} \Omega_{\MS}^\bullet (B) \\
		\widetilde{J_E}^* &:  \widetilde{\Omega I}_{\pp}^\bullet (M) \stackrel{c_E^*}{\twoheadrightarrow} \widetilde{\Omega I}^\bullet (E \times [0,1)) \xrightarrow[\cong]{\sigma_E^*} P_{\geq K}^\bullet (B) \\
		\overline{J_E}^* &: Q_E^\bullet (M) \stackrel{\overline{c_E}^*}{\twoheadrightarrow} Q_{E}^\bullet (E \times [0,1)) \xrightarrow[\cong]{\overline{\sigma_E}^*} Q^\bullet (B) := \frac{\Omega_{\MS}^\bullet (B)}{P_{\geq K}^\bullet (B) }.  \\
	\end{split}
\]
The induced maps \( J_E^* \) and \( \widetilde{J_E}^* \) are surjective by the standard argument of enlarging the collar and using a bump function. %(compare e.g. \cite[Prop. 6.3]{BandR})
Their kernel is $\ker (J_E^*) = \ker (\widetilde{J_E}^*) = \Omega^\bullet (M, C_E),$ consisting of forms that vanish on the collar neighbourhood $C_E$ of $E$ in $M.$ A $3\times 3-$Lemma then implies that the map \( \overline{J_E}^*: Q_E^\bullet (M) \rightarrow Q^\bullet (B) \) is an isomorphism. 
By Lemma \ref{lemma:MSdc}, subcomplex inclusion induces a quasi-isomorphism
\[ \overline{i}: Q^\bullet (B) \stackrel{qis}{\longrightarrow} \frac{\OMS (B)}{\ftgOMS (B) } \]
and since we work with a flat bundle \( E \) over \( B \), there is a quasi-isomorphism
\begin{equation} \label{eq:gammaB}
	\gamma_B: ft_{<K} \Omega_{\MS}^\bullet (B) \rightarrow \frac{\OMS (B)}{\ftgOMS (B) }
\end{equation}
by \cite[Lemma 6.7]{BandR}. All in all we get a fraction of quasi-isomorphisms
\[ \begin{tikzcd}
		~ & \frac{\OMS (B)}{\ftgOMS (B) } & ~ \\
		Q_E (M) \arrow{ru}{\overline{i} \circ J_E^*}[swap]{qis} & ~ & \ftsOMS (B) \ar{lu}{qis}[swap]{\gamma_B} 
\end{tikzcd}
\]
in the derived category \( \mathcal{D} (\R) \) which allows us to replace \( Q_E^\bullet (M) \) in (\ref{eqn:ses_dtr}) to get the desired distinguished triangle in \( \mathcal{D} ( \R ) \). 
\end{proof} 

\begin{mydef}[Relative de Rham complexes]\label{def:relderham}
\[
	\begin{split}
		\Omega_{rel}^\bullet (M) & := \{ \omega \in \Omega^\bullet (M) | c_E^*\omega = 0, c_W^* \omega =0 \} \\
		\widetilde{\Omega I}_{\pp}^\bullet (M,C_W) & := \{ \omega \in \widetilde{\Omega I}_{\pp}^\bullet (M) | c_W^* \omega = 0 \} \\
		ft_{\geq K} \Omega_{\MS}^\bullet (B,C_{\dB}) & := \{ \omega \in ft_{\geq K} \Omega_{\MS}^\bullet (B) | c_{\dE}^* \omega = 0 \} .
	\end{split}
\]
\end{mydef}

\begin{rem}
Note, that by the our choice of collars $c_{\dE}, \, c_{\dB},$ see Remark \ref{rem:prelColTriv}, the last of the previously described complexes can be characterized by moving the vanishing condition on the collar neighbourhood to the base parts of the multiplicative forms:
\begin{align*}
& \omega \in \ftgOMS (B, C_{\dB}) \\
\Leftrightarrow  ~ 
&\forall \, U \in \mathfrak{U}: ~ \omega|_{p^{-1} (U)} = \phi_U^* \sum_j \pi_1^* \eta_j \wedge \pi_2^* \gamma_j \quad \text{with} \\
&\eta_j \in \Omega^\bullet (U, U \cap C_{\dB}),\, \gamma_j \in \tau_{\geq K} \Omega^\bullet (L).
\end{align*}
To see this, let $U \in \mathfrak{U}$ and let $W \subset \dB$ be an open subset such that $c_{\dB} (W \times [0,1) ) \subset U.$ Since the collar $c_{\dB}$ is small (recall Definition \ref{def:small_collar}), the collar neighbourhood $C_{\dB} \subset B$ can be covered by finetely many such sets $c_{\dB} (W \times [0,1) ).$ Since we work with $p$-related collars, the collar neighbourhood $C_{\dE} \subset E$ can be covered by sets $c_{\dE} (p^{-1} (W) \times [0,1) ) = p^{-1} \left( c_{dB} (W \times [0,1) ) \right)$, for finetely many such $W \subset \dB$. Let $c_{\dB}|$ be the restriction of $c_{\dB}$ to $W \times [0,1)$ and let $c_{\dE}|$ be the restriction of $c_{\dE}$ to $p^{-1} (W) \times [0,1)$. We then get the following for $\omega \in \ftgOMS (B, C_{\dB})$, using the results of Remark \ref{rem:prelColTriv}.
\begin{align*}
0 = &(c_{\dE}|)^* (\omega|_{p^{-1} (U)}) = (c_{\dE}|)^* \phi_U^* \sum_j \pi_1^* \eta_j \wedge \pi_2^* \gamma_j \\
= & \sum_j (p \times \id)^* (c_{\dB}|)^* \eta_j \wedge \phi_U^* \pi_3^* \gamma_j
=  (\widetilde{\phi}_U)^* \sum_j \pi_1^* (c_{\dB}|)^* \eta_j \wedge \pi_2^* \gamma_j,
\end{align*}
where $\widetilde{\phi}_U: p^{-1} (W) \times [0,1) \to W \times [0,1) \times L$ is the trivialization related to $p \times \id_{[0,1)}$.
This implies $(c_{\dB}|)^* \eta_j = 0$ and hence $c_{\dB}^* \eta_j = 0$
, because the sets $c_{\dB} ( W \times [0,1))$ cover $C_{\dB} \subset B.$ To be more precise, assume without loss of generality that $W$ is a coordinate chart and let
\[ 
0 = \sum_j \pi_1^* \eta_j \wedge \pi_2^* \gamma_j = \sum_{I} \sum_{j=1}^{k_I} f_{j}^I dx^I \wedge \gamma_{j}^I \in \Omega^\bullet (W \times [0,1) \times L), \]
		where we sum over all multi-indizes \( I \) of coordinates on $W \times [0,1)$. The advantage of this notation is, that we sorted the forms in the first factor by the different $dx^I$ an can treat each multi-index \( I \) seperately. Assume that there is an \( j_0 \in \{1,...,k_I \} \) and an \( x \in W \times [0,1) \) such that \( f_{j_0}^I (x) \neq 0. \) Contracting with \( \partial_x^I \) and evaluating at \( x \), this gives:
		\[ \gamma_{j_0}^I = - \sum_{j \neq j_0} \frac{f_{j}^I (x)}{f_{j_0}^I (x)} \gamma_{j}^I. \]
		Therefore we can write
		\[ \sum_{j=1}^{k_I} f_{j}^I dx^I \wedge \gamma_{j}^I = \sum_{j \neq j_0} \bigl ( f_{j}^I - \underbrace{\frac{f_{j}^I (x)}{f_{j_0}^I (x)}}_{=: c_{j}^I} f_{j_0}^I \bigr ) dx^I \wedge \gamma_{j}^I. \]
		If these new coefficient functions \( f_{j}^I - c_{j}^I f_{j_0}^I \) vanish on \( W \times [0,1) \) we are done. Otherwise, repeat this process inductively to reduce the above sum to just one summand \( \widetilde{f}^I dx^I \wedge \gamma^I \), for some \( \gamma^I \), which still must equal the sum we started with and is thereby zero. Then either \( \gamma^I = 0 \) or \( \widetilde{f}^I = 0. \)
		\end{rem}

\begin{rem}
	The cohomology groups of the above defined complexes do not depend on the choice of a pair of \(p\)-related collars. This can be deduced by a spectral sequence argument, see \cite[Section 11]{PhD}.
\end{rem}

\begin{lemma} \label{lemma:disttri_tilde_2}
	There is a second distinguished triangle
\begin{equation}
	\label{disttri_2}
	\Omega_{rel}^\bullet (M) 
	\rightarrow
	\widetilde{\Omega I}_{\pp}^\bullet (M,C_W) 
	\rightarrow
	ft_{\geq K} \Omega_{\MS}^\bullet (B, C_{\dB}) 
	\rightarrow 	\Omega_{rel}^\bullet (M)[+1]
\end{equation}
\end{lemma}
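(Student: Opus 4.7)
My plan is to follow the strategy of Lemma~\ref{lemma:disttri_tilde_1}, but with the simplification that the analogous restriction map is honestly surjective on complexes, so no fraction of quasi-isomorphisms is needed. Concretely, I would consider the map
\[
\widetilde{J_E}^* : \wOI (M, C_W) \longrightarrow \ftgOMS (B, C_{\dB}),
\]
defined by $\widetilde{J_E}^* \omega := \sigma_E^* c_E^* \omega$, where $\sigma_E: E \hookrightarrow E \times [0,1)$ is the inclusion at $0$; equivalently, $\widetilde{J_E}^* \omega$ is the unique $\eta \in \ftgOMS(B)$ satisfying $c_E^* \omega = \pi_E^* \eta$. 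The first task is to verify that the target really lies in the subcomplex $\ftgOMS(B, C_{\dB})$. The collar compatibility $c_W|_{\dW \times [0,1)} = c_{\dE}$ from Section~\ref{subsection:CollarsMfdsCorners} says that $c_W$ maps $\dW \times [0,1)$ onto $c_{\dE}(\dE \times [0,1)) \subset E$; since $c_W^* \omega = 0$ by assumption, $\omega$ vanishes on the image of $c_{\dE}$ in $E$. Because $\eta$ agrees with $\omega|_E$ under $\sigma_E$, this forces $c_{\dE}^* \eta = 0$.

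The kernel of $\widetilde{J_E}^*$ is easily identified: $\widetilde{J_E}^* \omega = 0$ is equivalent to $c_E^* \omega = 0$, and combined with the defining condition $c_W^* \omega = 0$ of $\wOI(M, C_W)$ this amounts to $\omega \in \Omega_{rel}^\bullet(M)$. Conversely every $\omega \in \Omega_{rel}^\bullet(M)$ automatically lies in $\wOI(M, C_W)$ (take $\eta = 0$ and $\rho = 0$ in the respective pullback conditions) and clearly has vanishing image.

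The step that I expect to need the most care is surjectivity. Given $\eta \in \ftgOMS(B, C_{\dB})$, I would slightly extend $c_E$ to $\widetilde{c}_E: E \times [0, 1+\epsilon) \hookrightarrow M$ and choose a smooth bump function $\chi: [0, 1+\epsilon) \to [0,1]$ with $\chi \equiv 1$ on $[0,1]$ and $\chi \equiv 0$ near $1+\epsilon$, then set $\omega := (\widetilde{c}_E)_*(\chi \cdot \pi_E^* \eta)$, extended by zero on $M \setminus \im \widetilde{c}_E$. By construction $c_E^* \omega = \pi_E^* \eta$, so $\omega \in \wOI(M)$ as soon as we verify that $c_W^* \omega = 0$. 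For a point $p \in C_W \cap \im \widetilde{c}_E$, write $p = \widetilde{c}_E(e,s)$; the collar compatibility at the corner $\dE = \dW$, which can be arranged by refining the choice of collars produced in Section~\ref{subsection:CollarsMfdsCorners}, forces $e \in C_{\dE} \subset E$, and since $\eta|_{C_{\dE}} = 0$ by hypothesis we obtain $\omega(p) = \chi(s) \eta(e) = 0$. The technical heart of the argument is precisely this verification of the bicollar structure near the corner; without it, the extended form could fail to vanish on $C_W$.

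These steps assemble into a short exact sequence of cochain complexes
\[
0 \longrightarrow \Omega_{rel}^\bullet(M) \longrightarrow \wOI(M, C_W) \xrightarrow{\widetilde{J_E}^*} \ftgOMS(B, C_{\dB}) \longrightarrow 0,
\]
and the standard construction of the connecting morphism then yields the distinguished triangle~\eqref{disttri_2} in $\mathcal{D}(\R)$.
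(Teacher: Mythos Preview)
Your proposal is correct and follows essentially the same approach as the paper: construct the short exact sequence with $\widetilde{J_E}^*$, identify the kernel as $\Omega_{rel}^\bullet(M)$, and argue surjectivity by extending the collar and using a bump function. In fact you provide considerably more detail than the paper does---the paper dismisses surjectivity as ``the standard argument of enlarging the collar and using a bump function'' and does not explicitly check that the target lands in the relative subcomplex---so your careful handling of the bicollar structure at the corner and of the condition $c_{\dE}^*\eta = 0$ is a welcome elaboration of precisely the points the paper leaves implicit.
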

\begin{proof} The map \( \widetilde{J_E}^*: \widetilde{\Omega I}_{\pp}^\bullet (M, C_W) \rightarrow ft_{\geq K} \Omega_{\MS}^\bullet (B, C_{\dB}) \) is surjective by the standard argument of enlarging the collar and using a bump function. The kernel of \( \widetilde{J_E}^* \) are those forms \( \omega \in \widetilde{\Omega I}_{\pp}^\bullet (M,C_W) \) with
$ c_E^* \omega = 0 $
and hence \( \ker \widetilde{J_E}^* = \Omega_{rel}^\bullet (M).\) Therefore, we have a short exact sequence
\[
	0 \rightarrow \Omega_{rel}^\bullet (M) \longrightarrow \widetilde{\Omega I}_{\pp}^\bullet (M,C_W) \stackrel{\widetilde{J_E}^*}{\longrightarrow} ft_{\geq K} \Omega_{\MS}^\bullet (B, C_{\dB}) \rightarrow 0
\]
which induces the Distinguished Triangle (\ref{disttri_2}) in the derived category.

\end{proof}
\subsection{Poincar\'e Duality for Fiberwisely (Co)truncated Forms}
\begin{prop}
\label{prop:PD_ftMS}
	Let $K$ and $K^*$ be the cutoff values for complementary perversities $\pp$ and $\qq$ defined in Section \ref{subs:CotrValues}. Then, for any \( r \in \mathbb{Z} \), integration induces a nondegenerate bilinear form 
\[
	\begin{gathered}
		\int: H^{r-1} (ft_{<K} \Omega_{\MS}^\bullet (B)) \times H^{n-r}(ft_{\geq K^*} \Omega_{\MS}^\bullet (B,C_{\dB}))  \rightarrow \R , \\
		([\omega],[\eta])  \mapsto \int_E \omega \wedge \eta.
	\end{gathered}
\]

\end{prop}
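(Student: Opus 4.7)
My proof plan adapts Banagl's strategy from \cite[Section 7]{BandR} (Poincar\'e duality for fiberwisely (co)truncated multiplicatively structured forms over a closed base) to the present case of a base $B$ with boundary, replacing Poincar\'e duality on $B$ with Poincar\'e-Lefschetz duality. First I would verify the pairing descends to cohomology: if $\omega$ and $\eta$ are closed representatives with $c_{\dE}^*\eta = 0$, then Stokes' theorem on the compact manifold with boundary $E$ gives $\int_E d(\alpha\wedge\eta) = \int_{\dE}(\alpha\wedge\eta)|_{\dE} = 0$, since $\eta$ vanishes in a collar neighborhood of $\dE$; the argument is symmetric in the other variable.

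The core of the argument is a Mayer-Vietoris induction on $B$. I would fix a finite good open cover $\{U_\alpha\}_{\alpha\in I}$ of $B$ which trivializes the bundle and is compatible with a small collar of $\dB$, the existence of which is guaranteed by Lemma \ref{lemma:widthcollar} together with Corollary \ref{cor:exnicecollars_isolated}. Viewing $U \mapsto ft_{<K}\OMS(U)$ and $U \mapsto ft_{\geq K^*}\OMS(U, C_{\dB}\cap U)$ as presheaves on $B$, a standard partition-of-unity argument yields Mayer-Vietoris long exact sequences in cohomology. The integration pairing is natural under restriction and anticommutes (up to sign) with the connecting homomorphisms; thus a 5-lemma argument combined with induction on $|I|$ reduces nondegeneracy to the case of a single trivializing chart $U \subset B$.

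Over such a $U$ the bundle is a product $U \times L$ and K\"unneth isomorphisms give
\begin{align*}
H^{r-1}(ft_{<K}\OMS(U)) &\cong \bigoplus_{i+j=r-1} H^i(U) \otimes H^j(\tau_{<K}\Omega^\bullet(L)), \\
H^{n-r}(ft_{\geq K^*}\OMS(U, C_{\dB}\cap U)) &\cong \bigoplus_{i+j=n-r} H^i(U, C_{\dB}\cap U) \otimes H^j(\tau_{\geq K^*}\Omega^\bullet(L)).
\end{align*}
By Fubini's theorem the integration pairing splits as the tensor product of (a) Poincar\'e-Lefschetz duality on $U$, which is standard since $U$ is a ball or half-ball, and (b) the nondegenerate pairing between $H^\bullet(\tau_{<K}\Omega^\bullet(L))$ and $H^\bullet(\tau_{\geq K^*}\Omega^\bullet(L))$ on the closed Riemannian manifold $L$ established in \cite[Section 4]{BandR}, which is available because complementary perversities yield $K + K^* = m+1$. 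Both factors are nondegenerate, hence so is their product.

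The main obstacle is the Mayer-Vietoris step: the fiberwise cotruncation $\tau_{\geq K^*}\Omega^\bullet(L)$ is defined via Hodge theory on $L$, and must be compatible with the transition maps between local trivializations. This is precisely where geometric flatness is indispensable, since isometric transitions preserve the Hodge decomposition and hence the cotruncation, so $ft_{\geq K^*}\OMS$ is well-defined globally on $B$. One must also check that the relative condition $c_{\dE}^*\eta = 0$ interacts correctly with the local K\"unneth decomposition; the reformulation given in the remark following Definition \ref{def:relderham}, translating collar-vanishing into vanishing of the base parts $\eta_j$ on $C_{\dB}$, is what makes the relative condition intrinsic to the base factor and thereby compatible with K\"unneth.
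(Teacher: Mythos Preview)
Your overall architecture (local duality plus Mayer--Vietoris bootstrap) matches the paper's, but there is a genuine gap in the choice of the second presheaf. You work with
\[
U \longmapsto ft_{\geq K^*}\Omega_{\MS}^\bullet(U, C_{\dB}\cap U),
\]
which is \emph{contravariant} in $U$, just like $U \mapsto ft_{<K}\Omega_{\MS}^\bullet(U)$. With both presheaves contravariant, the two Mayer--Vietoris sequences run in the same direction, and after dualizing one of them the 5-Lemma lets you pass from $U$, $V$, $U\cup V$ to $U\cap V$ --- the wrong direction for an induction that builds $B$ out of charts. The paper instead uses the \emph{compactly supported} version $ft_{\geq K^*}\Omega_{\MS,c}^\bullet(U, U\cap C_{\dB})$; this is covariant under open inclusions, so its Mayer--Vietoris sequence is the ``wrong-way'' one, and after dualizing it aligns perfectly with the contravariant sequence for the 5-Lemma (Lemma \ref{lemma:bootsmapMS}). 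At the very end one invokes compactness of $B$ to identify $ft_{\geq K^*}\Omega_{\MS,c}^\bullet(B, C_{\dB}) = ft_{\geq K^*}\Omega_{\MS}^\bullet(B, C_{\dB})$ (Remark \ref{rem:CSonB}).

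The same omission wrecks your local step. Over an interior chart $U\cong\R^b$ you have $C_{\dB}\cap U=\emptyset$, so your K\"unneth factor on the base is $H^\bullet(U)=H^\bullet(\mathrm{pt})$ on \emph{both} sides, and the pairing $\int_{U\times L}\pi_1^*\eta\wedge\pi_2^*\gamma$ involves $\int_U$ of a non-compactly-supported form, which need not be finite; moreover the base degrees do not add up to $b=\dim B$, so there is no way to reduce to duality on $L$. The paper's Poincar\'e Lemma for compactly supported relative forms (Lemma \ref{lemma:PLrc}) is exactly what supplies the missing degree shift by $b$: it identifies $ft_{\geq K^*}\Omega_{\MS,c}^\bullet(U,U\cap C_{\dB})$ with $\tau_{\geq K^*}\Omega^{\bullet-b}(L)$ via integration over the base and a compactly supported bump form $e\in\Omega_c^b(U,U\cap C_{\dB})$, after which local duality (Lemma \ref{lemma:localPDMS}) is precisely the fiber duality you cite.
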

For being able to prove the above Proposition \ref{prop:PD_ftMS}, we need two Poincar\'e Lemmata and a Bootstrap Principle:

\begin{lemma} (Poincar\'e Lemma for fiberwisely truncated forms) \label{lemma:PLfibtr} \\
	Let \( U \subset B \) be a coordinate chart intersecting the boundary $\dB$, that means the bundle \( p: E \rightarrow B \) trivializes over \( U \). In detail, there is a diffeomorphism \( \phi_U: p^{-1} (U) \stackrel{\cong}{\longrightarrow} U \times L \) with \( p = \pi_1 \circ \phi_U \). Let further denote \( \pi_2: U \times L \rightarrow L \) the second factor projection and \( S_x: L \stackrel{at ~x}{\longrightarrow} U \times L \) the inclusion at \( x \in U - (\dB \cap U) \). Then the induced maps
\[
\begin{tikzcd}
	ft_{<K} \Omega_{\MS}^\bullet (U) \arrow[shift left=0.5ex]{r} {S_x^*}  & \tau_{<K} \Omega^\bullet (L) \arrow[shift left=0.5ex]{l}{(\pi_2 \circ \phi_U)^*}
\end{tikzcd}	
\]
are chain homotopy inverses of each other. In particular both are homotopy equivalences.
\end{lemma}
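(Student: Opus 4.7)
The plan is to adapt the classical Poincar\'e homotopy on the base chart $U$, arranging for it to act only on the base factor so that the multiplicative structure and the fiberwise truncation degree are preserved. Using the trivialization $\phi_U$, I identify $p^{-1}(U) \cong U \times L$, so that $(\pi_2 \circ \phi_U)^* \gamma = \pi_2^* \gamma$ for $\gamma \in \Omega^\bullet(L)$, and $S_x$ becomes the slice inclusion $y \mapsto (x,y)$. Since $\pi_2 \circ S_x = \id_L$, the composition $S_x^* \circ (\pi_2 \circ \phi_U)^*$ equals the identity on $\tau_{<K} \Omega^\bullet(L)$ already at the chain level, and no homotopy is needed in that direction.

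For the reverse composition, I choose coordinates $\nu : U \xrightarrow{\cong} V$ with $V = \R^b$ in the interior case and $V = \R^{b-1} \times [0,\infty)$ in the boundary case. Because $x \in U - (\dB \cap U)$, the point $\nu(x)$ has strictly positive last coordinate, so $V$ is star-shaped with respect to $\nu(x)$ in either case. Pulling the straight-line contraction in $V$ back via $\nu$ produces a smooth homotopy $H : U \times [0,1] \to U$ from the constant map at $x$ to $\id_U$. Setting $\widetilde{H} := H \times \id_L$ and defining $K := \int_{[0,1]} \widetilde{H}^*$ by fiber integration, the standard Poincar\'e calculation yields the chain-homotopy identity
\[
dK + Kd = \id - (S_x \circ \pi_2)^* = \id - (\pi_2 \circ \phi_U)^* \circ S_x^*
\]
on all of $\Omega^\bullet(U \times L)$.

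The main point to check is that $K$ restricts to the subcomplex $\ftsOMS(U)$. For a multiplicative summand $\omega = \pi_1^* \eta \wedge \pi_2^* \gamma$ with $\gamma \in \tau_{<K} \Omega^\bullet(L)$, one computes $\widetilde{H}^* \omega = q_1^* H^* \eta \wedge q_2^* \gamma$, where $q_1 : U \times L \times [0,1] \to U \times [0,1]$ and $q_2 : U \times L \times [0,1] \to L$ are the projections. Since $q_2^* \gamma$ carries no $dt$-component, fiber integration over $[0,1]$ only touches the base factor, giving
\[
K \omega = \pi_1^* (\widetilde{K}_U \eta) \wedge \pi_2^* \gamma
\]
for $\widetilde{K}_U \eta := \int_{[0,1]} H^* \eta$ the ordinary Poincar\'e operator on $U$. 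The link component $\gamma$, and in particular its truncation degree, is left untouched, so $K \omega$ stays in $\ftsOMS(U)$; by linearity $K$ preserves the whole subcomplex. This is the one subtle point of the argument: it succeeds precisely because the homotopy was arranged to be trivial in the link direction, so both the multiplicative splitting and the condition $\gamma \in \tau_{<K} \Omega^\bullet(L)$ propagate automatically to the homotopy output, yielding the desired chain homotopy equivalence.
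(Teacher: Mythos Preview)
Your argument is correct. The key insight---that a Poincar\'e homotopy acting only on the base factor leaves the fiber form $\gamma$ untouched and hence preserves both the multiplicative splitting and the truncation condition $\gamma \in \tau_{<K}\Omega^\bullet(L)$---is exactly what is needed, and your verification that $K(\pi_1^*\eta \wedge \pi_2^*\gamma) = \pi_1^*(\widetilde{K}_U\eta) \wedge \pi_2^*\gamma$ makes this precise.

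The paper's proof (by reference to \cite[Lemma~5.1]{BandR}) proceeds instead by induction on the base dimension $b$, peeling off one real coordinate at a time; the remark about embedding $\R^0 \hookrightarrow [0,\infty)$ at $1$ rather than at $0$ is the boundary-chart adjustment in that inductive scheme. Your one-shot star-shaped contraction is more direct here, while the inductive framework has the advantage of running in parallel with the compactly-supported version (Lemma~\ref{lemma:PLrc}), where integrating along one coordinate at a time is the natural operation. Either route produces a homotopy operator of the same shape on the subcomplex.
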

\begin{proof}
The proof is an analogy to the proof of \cite[Lemma 5.1]{BandR}. The only difference is that for charts intersecting the boundary (which means that $U$ is diffeomorphic to the half space $\R^n_+ = [0,\infty) \times \R^{n-1}$ and not to $\R^n$), $\R^0 \hookrightarrow \R_+ = [0, \infty)$ is embedded at $1 \in [0, \infty).$ This does not change the argument, though. 
\end{proof}

\begin{mydef}
	For any open subset \( U \subset B\) we define
	\[ \begin{split}
		\Omega_{\MS}^\bullet (U, U \cap C_{\dB}) &:= \bigl \{ \omega \in \Omega_{\MS}^\bullet (U) \Bigl | \omega|_{\underbrace{p^{-1}(U \cap C_{\dB})}_{= p^{-1} (U) \cap C_{\dE}}} = 0 \bigr \}, \\
		\OMSc (U, U \cap C_{\dB} ) & := \bigl \{ \omega \in \OMSc (U) \Bigl | \omega|_{\underbrace{p^{-1}(U \cap C_{\dB})}_{= p^{-1} (U) \cap C_{\dE}}} = 0 \bigr \}
	\end{split}
	\]
	Analogously, we define the fiberwisely truncated and cotruncated subcomplexes.
\end{mydef}

In the following lemma we give the induction start for the Mayer--Vietoris argument, which makes use of the fact that the collar we work with is small with respect to the chosen good open cover \( \mathcal{U} \) (compare to \ref{lemma:widthcollar}).

\begin{lemma}(Poincar\'e Lemma for relative forms with compact supports) \label{lemma:PLrc}
	Let \( U \in \mathcal{U} \) be an open chart (with respect to which the bundle trivializes, i.e. there is a diffeomorphism \( \phi_U : p^{-1} (U) \rightarrow U \times L \) with \( p|_{p^{-1} (U)} = \pi_1 \circ \phi_U \)). Then in particular there is a diffeomorphism \( \psi : U \stackrel{\cong}{\rightarrow} V \) with \( V = \R^n_+ \) or \( V = \R^n \) and, by Lemma \ref{lemma:widthcollar}, \( U \) is \underline{not} completely contained in the collar neighbourhood \( C_{\dB} \supset \dB \) of the boundary of \( B \). 
	Then there is a form \( e \in \Omega_c^n( U, U \cap C_{\dB} ) = \{ \omega \in \Omega_c^n (U) ~|~ \omega|_{U \cap C_{\dB}} = 0 \} \) such that the maps 
	\[
	\begin{tikzcd}
		ft_{\geq K} \Omega_{\MS,c}^\bullet (U, U \cap C_{\dB}) \arrow[shift left=0.5ex]{rr}{(\pi_2)_* \circ (\phi_U^{-1})^*} &  & \tau_{\geq K} \Omega^{\bullet-n} (L) \arrow[shift left=0.5ex]{ll}{e_*},
\end{tikzcd}	
\]
where 
\[
	{\pi_2}_* (\pi_1^* \eta \wedge \pi_2^* \gamma ) = 
	\begin{cases} 	\bigl ( \int_{U} \eta \bigr ) ~ \gamma 	& \text{if} ~ \eta \in \Omega^n_c (U, U \cap C_{\dB}),\\
		0 						& \text{else},
	\end{cases}
\]
and
\begin{equation} \label{eq:def_e} 
       	e_* (\gamma) = \phi_U^* (\pi_1^* e \wedge \pi_2^* \gamma), 
\end{equation}
are chain homotopy inverses of each other and in particular are both chain homotopy equivalences.
\end{lemma}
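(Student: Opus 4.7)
The plan is to first build the bump form $e$, verify the easy composition $(\pi_2)_* \circ (\phi_U^{-1})^* \circ e_* = \id$ by direct calculation, and then produce a chain homotopy in the other direction by transporting, via the trivialization $\phi_U$, a classical Poincar\'e homotopy on the base chart through the multiplicative decomposition of the forms.

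By Lemma \ref{lemma:widthcollar} the set $U \setminus \overline{U \cap C_{\dB}}$ is non-empty and open. Choose any bump form $e \in \Omega_c^n(U)$ with compact support inside this open set and with $\int_U e = 1$; then automatically $e \in \Omega_c^n(U, U \cap C_{\dB})$. For $\gamma \in \tau_{\geq K}\Omega^{\bullet-n}(L)$ one has $(\phi_U^{-1})^* e_*(\gamma) = \pi_1^* e \wedge \pi_2^* \gamma$, and by the definition of $(\pi_2)_*$ this is sent to $\bigl(\int_U e\bigr)\gamma = \gamma$, establishing one of the two compositions.

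For the other composition, I would pull everything back to the trivial bundle $U \times L$ via $\phi_U$. Using an argument in the spirit of the remark following Definition \ref{def:relderham}, the multiplicative structure together with compact support and vanishing on the collar translates into a finite-sum presentation $(\phi_U^{-1})^*\omega = \sum_j \pi_1^* \eta_j \wedge \pi_2^* \gamma_j$ with $\eta_j \in \Omega_c^\bullet(U, U \cap C_{\dB})$ and $\gamma_j \in \tau_{\geq K}\Omega^\bullet(L)$. Now I would invoke a Poincar\'e--type lemma on the base chart: a chain homotopy $h_U : \Omega_c^\bullet(U, U \cap C_{\dB}) \to \Omega_c^{\bullet-1}(U, U \cap C_{\dB})$ with $dh_U + h_U d = \id - e \cdot \int_U$, where the operator $e \cdot \int_U$ sends an $n$-form $\eta$ to $\bigl(\int_U \eta\bigr)\,e$ and vanishes in lower degrees. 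I then define $K\omega := \phi_U^* \sum_j \pi_1^* h_U(\eta_j) \wedge \pi_2^* \gamma_j$. Since $h_U$ preserves the relative compactly supported complex and the $\gamma_j$ are untouched, $K$ lands in $ft_{\geq K}\Omega_{\MS,c}^{\bullet - 1}(U, U \cap C_{\dB})$, and a direct computation using the Leibniz rule (the two $d\gamma_j$ cross--terms cancel by a sign) together with the defining identity for $h_U$ yields $dK + Kd = \id - e_* \circ (\pi_2)_* \circ (\phi_U^{-1})^*$, as required.

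The main obstacle is establishing the existence of $h_U$ on the \emph{relative} compactly supported complex. For $U \cong \R^n$ disjoint from the collar this is the classical Bott--Tu Poincar\'e lemma for compactly supported forms on $\R^n$. For $U \cong \R^n_+$ a boundary chart, one identifies $\Omega_c^\bullet(U, U \cap C_{\dB})$, via a translation in the collar coordinate, with $\Omega_c^\bullet\bigl([0, \infty) \times \R^{n-1}\bigr)$ (forms allowed to touch the boundary $\{0\} \times \R^{n-1}$). Lefschetz duality gives $H^\bullet_c\bigl([0, \infty) \times \R^{n-1}\bigr) \cong \R[n]$, and an explicit homotopy can be constructed by iterating the one-dimensional compactly supported Poincar\'e lemma in each coordinate direction, placing the bump $e$ strictly away from the boundary so that the iterated integrations never cross the collar; an analogous construction treats the mixed case of an interior chart whose support happens to meet $C_{\dB}$. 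Once $h_U$ is in hand, the remainder of the argument is bookkeeping with the multiplicative decomposition.
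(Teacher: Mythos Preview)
Your approach is correct in substance and closely parallel to the paper's, but organized differently. The paper does not first establish a base Poincar\'e lemma for $\Omega_c^\bullet(U, U \cap C_{\dB})$ and then tensor with the identity on the fibre; instead it peels off a single distinguished base coordinate $x^0$ (chosen so that $\psi(U\cap C_{\dB})\subset\{x^0\le s\}$), writes down an explicit one-variable homotopy showing that integration along $x^0$ and wedging with $e_0=\epsilon_0\,dx^0$ are inverse equivalences between $ft_{\geq K}\Omega_{\MS,c}^\bullet(U,U\cap C_{\dB})$ and $ft_{\geq K}\Omega_{\MS,c}^{\bullet-1}(\R^{n-1})$, and then cites \cite[Lemma~5.5]{BandR} for the remaining $n-1$ coordinates (which carry no relative condition). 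Your ``base first, then fibre'' route is arguably cleaner conceptually once $h_U$ is in hand; the paper's route has the advantage that the homotopy is a single explicit integration formula on $U\times L$, so it is visibly independent of any multiplicative presentation and visibly preserves both the fibrewise cotruncation and the vanishing on the collar.

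Two points you should tighten. First, your operator $K\omega=\phi_U^*\sum_j\pi_1^*h_U(\eta_j)\wedge\pi_2^*\gamma_j$ is defined through a chosen presentation of $\omega$, and well-definedness is not addressed; it follows once $h_U$ is given by an integration formula (so that $K$ is really ``apply $h_U$ in the base directions, do nothing in the fibre'' as an operator on all of $\Omega^\bullet(U\times L)$), but this deserves a sentence. Second, your identification of $\Omega_c^\bullet(U,U\cap C_{\dB})$ with $\Omega_c^\bullet\bigl([0,\infty)\times\R^{n-1}\bigr)$ ``via a translation in the collar coordinate'' is not quite right: nothing forces $U\cap C_{\dB}$ to be a product strip in the chart. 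What the paper actually uses is only that one can arrange the chart so that $\psi(U\cap C_{\dB})\subset\{x^0\le s\}$; the bump $\epsilon_0$ then lives in $\{x^0>s\}$ and the one-variable homotopy respects the vanishing condition. With that adjustment your iterated one-dimensional argument goes through.
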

\begin{proof}
\emph{First step:} (Definition of the form \( e \))

Independent of \( U \) being diffeomorphic to \( \R^n \) or \( \R^n_+ \) we can assume that \( \psi (U) = V \subset \R^n \) is arranged in such a way that for, say the \( x^0 \) component of elements \( x \in V \) large enough, \( x^0 > s \), one has \( x \not \in  \psi (C_{\dB} \cap U ) \) (for \( V = \R^n_+ \), \( x^0 \) is also a component such that \( \partial \R^n_+ = \{ x^0 = 0 \} \)).
We then take bump functions \( \epsilon_i \in C_0^\infty ( \R )  \) with \( \int_\R \epsilon_i = 1 \) for \( i \in \{ 0, ... , n-1 \} \), such that in addition \( \text{supp} (\epsilon_0) \subset (s, \infty ) \). But then 
\[ e := \psi^* ~ \Bigl ( \prod_{i=0}^{n-1} \epsilon_i \Bigr ) ~ dx^0 \wedge ... \wedge dx^{n-1} \in \Omega_c^n ( U, U \cap C_{\dB} ). \]
	The map \( e_*: \tau_{\geq K} \Omega^r (L) \rightarrow ft_{\geq K} \Omega_{\MS,c}^{r+n} (U, U \cap C_{\dB} ) \) is defined by relation (\ref{eq:def_e}) and by the definition of the form \( e \) it holds that \( (\pi_2)_* \circ (\phi_U^{-1})^* \circ e_* = \id \).

\emph{Second step:} (Construction of the homotopy operator)
As in the proof of \cite[Lemma 5.5]{BandR} and in the proof of the previous Lemma \ref{lemma:PLfibtr}, we prove by induction on \( n\) that \( e_* \circ (\pi_2)_* \circ (\phi_U^{-1})^* \simeq \id \). 
Note, that the complex $ft_{\geq K } \Omega_{\MS, c}^{\bullet -1} (\R^{n-1} )$ is defined on \cite[p.21]{BandR}, where Banagl uses the symbol $F$ instead of $L$.
First, let $e_0 = \epsilon_0 dx^0 \in \Omega^ ([0,\infty)).$ We show that the maps \[ 
\begin{split}
{e_0}_* : ft_{\geq K } \Omega_{\MS, c}^{\bullet -1} (\R^{n-1} ) & \longrightarrow  \ftgOMSc ( U, U \cap C_{\dB} ) \\
{e_0}_* ( \pi_1^* \eta \wedge \pi_2^* \gamma) &:= \phi_U^* \bigl ( \pi_1^* \psi^* ( e_0 \wedge \pi^* \eta) \wedge \pi_2^* \gamma )
\end{split} \]
		with \( \pi : V \rightarrow \R^{n-1} \) the projection, and 
		\[ \pi_*: \ftgOMS (U , U \cap C_{\dB}) \rightarrow ft_{\geq K} \Omega_{\MS, c}^{\bullet -1} (\R^{n-1}) \] 
		(integration along the first fiber coordinate) defined by 
		\begin{align*}
				\pi_* \bigl ( \phi_U^* ( \pi_1^* \psi^* ( \underbrace{f(t,x) du^J}_{\text{no}~ dt ~ \text{contained} } ) \wedge \pi_2^* \gamma ) \bigr ) & = 0 \\
				\pi_* \bigl ( \phi_U^* ( \pi_1^* \psi^* (g(t,x) dt \wedge du^J ) \wedge \pi_2^* \gamma ) \bigr ) &= \pi_1^* \int_{\R} g(t,x) dt ~ du^J \wedge \pi_2^* \gamma 
		\end{align*}
	satisfy the relation \( {e_0}_* \circ \pi_* \simeq \id \) and hence are mutually inverse homotopy equivalences.
	The homotopy operator \[ K: \ftgOMSc (U, U \cap C_{\dB} ) \rightarrow ft_{\geq K } \Omega_{\MS, c}^{\bullet -1} (U, U \cap C_{\dB} ) \] satisfying \( dK + Kd = {e_0}_* \circ \pi_* \) is defined by	
%	\Omega_c^\bullet \bigl (V \times L, \psi (U \cap C_{\dB}) \times L \bigr ) \rightarrow \Omega_c^{\bullet -1} \bigl (V \times L, \psi (U \cap C_{\dB}) \times L \bigr ) \) used in the induction basis and the induction step is as usual defined by
\[ \begin{split}
		& K  \bigl ( \phi_U^* ( \pi_1^* \psi^* (f(t,x) du^J ) \wedge \pi_2^* \gamma ) \bigr )   = 0 \\
		& K \bigl ( \phi_U^* ( \pi_1^* \psi^* (g(t,x) dt \wedge du^J) \wedge  \pi_2^* \gamma )  \bigr ) \\
		& = \phi_U^* ( \pi_1^* \psi^* ( \int_{-\infty}^t g(\tau,x) d\tau - \int_{- \infty}^t e_0 ) ~ \int_{\R} g(\tau, x) d \tau ) du^J \wedge \pi_2^* \gamma ).
\end{split}
\]
\emph{Remark: For $U \cong \R^n_+$ the lower integration limits in the definition of $K$ and $\pi_*$ must be changed from $-\infty$ to $0.$}\\
By our definition of \( e_0 \), \( K \) respects the vanishing condition. A standard calculation shows that \( Kd + dK = {e_0}_* \circ \pi_* - \id \).

The second step is to put together the first step with the result of \cite[Lemma 5.5]{BandR}: The following diagram commutes
		\[ \begin{tikzcd}
				\ftgOMSc (U, U \cap C_{\dB}) \ar{r}{\pi_*} \ar[shift left=1ex, bend left=20]{rr}{{\pi_2}_* \circ (\phi_U^{-1})^*} &
				ft_{\geq K} \Omega_{\MS,c}^{\bullet -1} (\R^{n-1})  \ar[shift left=1ex]{l}{{e_0}_*} \ar{r}{\widetilde{\pi}_*} &
				\tau_{\geq K} \Omega^{\bullet -n} (L) \ar[shift left=1ex]{l}{\widetilde{e}_*} \ar[shift left=1ex, bend left=20]{ll}{e_*}
		\end{tikzcd}
	\]
	Note that \( \widetilde{e}_* \) and \( \widetilde{\pi}_* \) denote the mutually inverse homotopy equivalences of \cite[Lemma 5.5]{BandR}. The commutativity of this diagram then implies the statement of the lemma: Since \( e_* = \widetilde{e}_* \circ {e_0}_* \) and \( {\pi_2}_* = \pi_* \circ \widetilde{\pi}_* \) are the composition of mutually inverse homotopy equivalences, they are also mutually inverse homotopy equivalences.
\end{proof}

To use a Mayer--Vietoris type argument we need a bootstrap principle. The following lemma will provide one in our case:
\begin{lemma} (Bootstrap principle) \label{lemma:bootsmapMS} \\
	Let \( U, V \subset B \) be open sets and let \( b:= \dim B \), \( m = \dim L \). Then if
	\[ \begin{gathered}
	\int : ~  H^r \bigl (\ftsOMS (Y) \bigr ) \times H^{b+m-r}	\bigl ( \ftgqOMSc (Y, Y \cap C_{\dB}) \bigr )  \rightarrow \R \\
		 \bigl ( [\omega], [\eta] \bigr )  \mapsto \int_{p^{-1} (Y)} \omega \wedge \eta 
	\end{gathered}
\]
is nondegenerate for \( Y = U, V, U \cap V \), so it is for \( Y = U \cup V \).
\end{lemma}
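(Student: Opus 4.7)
The plan is the standard Mayer--Vietoris bootstrap, adapted to the multiplicatively structured setting. I would construct two short exact sequences, one for each side of the duality pairing, pass to long exact cohomology sequences, and combine them into a ladder via the integration pairing. A Five Lemma argument then promotes nondegeneracy from $U$, $V$, $U \cap V$ to $U \cup V$.

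The first step is to establish the Mayer--Vietoris sequences. For $\ftsOMS$, restriction gives
\[
0 \to \ftsOMS(U \cup V) \to \ftsOMS(U) \oplus \ftsOMS(V) \to \ftsOMS(U \cap V) \to 0,
\]
with the usual signs. Surjectivity on the right requires a partition of unity $\{\rho_U, \rho_V\}$ subordinate to $\{U, V\}$ on $B$, whose pullback $p^* \rho_U, p^* \rho_V$ to $E$ splits a multiplicatively structured form $\omega$ on $p^{-1}(U \cap V)$ as $p^* \rho_V \cdot \omega$ (extended by zero to $U$) and $-p^* \rho_U \cdot \omega$ (extended to $V$); since multiplying by $p^* \rho$ preserves the multiplicatively structured shape $\phi_\alpha^* \sum \pi_1^* \eta_{j_\alpha} \wedge \pi_2^* \gamma_{j_\alpha}$ (it only modifies the base factors $\eta_{j_\alpha}$), and does not touch the fiber factors, the fiberwise truncation condition is preserved. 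An analogous sequence
\[
0 \to \ftgqOMSc(U \cap V, \cdot) \to \ftgqOMSc(U, \cdot) \oplus \ftgqOMSc(V, \cdot) \to \ftgqOMSc(U \cup V, \cdot) \to 0
\]
is obtained by extension by zero; the vanishing condition on collar neighbourhoods of $\dB$ is compatible with the partition of unity argument, and the fiberwise cotruncation condition is again preserved since $p^* \rho$ only affects base factors.

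The second step is to check that integration intertwines the long exact sequences up to a sign depending only on $r$. Concretely, on representatives $[\omega] \in H^r(\ftsOMS(U \cap V))$ and $[\eta] \in H^{b+m-r-1}(\ftgqOMSc(U \cup V, \cdot))$, a chase through the Mayer--Vietoris connecting maps $\delta$ and the extension maps shows
\[
\int_{p^{-1}(U \cup V)} \delta[\omega] \wedge \eta = \pm \int_{p^{-1}(U \cap V)} \omega \wedge \delta'[\eta],
\]
where $\delta'$ is the connecting homomorphism for the compactly supported sequence; the sign is the usual $(-1)^{r+1}$. This is a routine calculation with the partition of unity and $d(p^*\rho_V \wedge \omega) = d(p^*\rho_V) \wedge \omega$ on $U$ (since $\omega$ is closed there), together with the defining property $p^*\rho_U + p^*\rho_V \equiv 1$.

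The third step assembles the ladder: the pairing induces a map from the long exact cohomology sequence of $\ftsOMS$ applied to $(U, V, U \cap V, U \cup V)$ to the linear dual of the (reversed) long exact sequence of $\ftgqOMSc$ with total degree $b+m$. By assumption the vertical maps at $Y = U$, $Y = V$, and $Y = U \cap V$ are isomorphisms. The Five Lemma then forces the vertical map at $Y = U \cup V$ to be an isomorphism as well, which is precisely the nondegeneracy claim.

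I expect the main obstacle to be the verification that the Mayer--Vietoris sequences exist in the fiberwisely (co)truncated, multiplicatively structured category, i.e.\ that multiplication by $p^*\rho$ respects every structural condition in play (multiplicativity, truncation/cotruncation degree, and, for the compactly supported variant, the vanishing condition on $C_{\dE}$). The truncation part is manifest because the fiber factors are unchanged; the cotruncation part uses only that the structure functions act fiberwise, so multiplying by a function pulled back from $B$ preserves coclosedness of the fiber piece; and the vanishing condition on the collar $C_{\dE}$ is preserved because $p^*\rho_U, p^*\rho_V$ are functions on $E$, so they do not destroy the zero. Once this is in place, the rest is mechanical sign-chasing as in the classical de Rham proof.
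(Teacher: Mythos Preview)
Your proposal is correct and follows essentially the same approach as the paper: the paper simply cites the Mayer--Vietoris bootstrap of \cite[Lemma 5.10]{BandR} and singles out exactly the check you flag as the main obstacle, namely that for $f \in C^\infty(U)$ and $\omega \in \ftgqOMSc(U, U \cap C_{\dB})$ one has $p^*(f)\,\omega \in \ftgqOMSc(U, U \cap C_{\dB})$. Your outline is in fact more detailed than the paper's own proof, which defers the mechanics to the reference.
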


\begin{proof} The same arguments as in the proof of \cite[Lemma 5.10]{BandR} apply. To establish the short exact sequence for the complexes with compact support, one has to check that for \( \omega \in \ftgqOMSc (U, U \cap \dB) \) and \( f \in C^\infty (U) \) it holds that \[ p^* (f) ~ \omega \in \ftgqOMSc (U , U \cap C_{\dB}). \]
Since this is not very hard, we skip the argument.
\end{proof}

\begin{rem} \label{rem:CSonB} 
	Note, that the compactness of \( B \) implies that
	\[ \ftgOMSc (B, C_{\dB}) = \ftgOMS ( B , C_{\dB}). \] 
	The proof is literally the same as the proof of \cite[Lemma 5.11]{BandR}.
\end{rem}

Together with the bootstrap principle of the above Lemma \ref{lemma:bootsmapMS}, we need an induction basis for being able to use the inductive Mayer-Vietoris argument.
\begin{lemma} (Local Poincar\'e Duality) \label{lemma:localPDMS} \\
	For \( U \in \mathcal{U} \) a coordinate chart, the bilinear form
	\[
		\int : H^r \bigl ( \ftsOMS (U) \bigr ) \times H^{b+m-r} \bigl ( \ftgqOMSc ( U, U \cap C_{\dB} ) \bigr ) \rightarrow \R,
	\]
	where again \( b = \dim B \), \( m = \dim L \), is nondegenerate.
\end{lemma}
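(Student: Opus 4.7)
The plan is to reduce local Poincar\'e duality on $p^{-1}(U)$ to the (known) Poincar\'e duality between $\tau_{<K} \Omega^\bullet (L)$ and $\tau_{\geq K^*} \Omega^\bullet (L)$ on the closed Riemannian fiber $L$, which is established in \cite[Section 4]{BandR}. First, Lemma \ref{lemma:PLfibtr} provides a chain homotopy equivalence
\[ S_x^* : \ftsOMS (U) \stackrel{\simeq}{\longrightarrow} \tau_{<K} \Omega^\bullet (L) \]
with homotopy inverse $(\pi_2 \circ \phi_U)^*$, and Lemma \ref{lemma:PLrc} provides a chain homotopy equivalence
\[ (\pi_2)_* \circ (\phi_U^{-1})^* : \ftgqOMSc (U, U \cap C_{\dB}) \stackrel{\simeq}{\longrightarrow} \tau_{\geq K^*} \Omega^{\bullet -b} (L) \]
with homotopy inverse $e_*$, where $e \in \Omega^b_c(U, U \cap C_{\dB})$ has $\int_U e = 1$ and support disjoint from $U \cap C_{\dB}$ (this is guaranteed by the construction of $e$ via the bump function $\epsilon_0$ with $\mathrm{supp} (\epsilon_0) \subset (s, \infty)$ in the proof of Lemma \ref{lemma:PLrc}). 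Hence both sides of the bilinear form in the lemma reduce, on cohomology, to $H^r \bigl ( \tau_{<K} \Omega^\bullet (L) \bigr )$ and $H^{m-r} \bigl ( \tau_{\geq K^*} \Omega^\bullet (L) \bigr )$, respectively.

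Next, I would verify that under these identifications the integration pairing on $p^{-1} (U) \cong U \times L$ becomes the standard wedge-and-integrate pairing on $L$. Taking normalized representatives $\omega = (\pi_2 \circ \phi_U)^* \gamma_1$ with $\gamma_1 \in \tau_{<K} \Omega^r (L)$ and $\eta = e_*(\gamma_2) = \phi_U^* (\pi_1^* e \wedge \pi_2^* \gamma_2)$ with $\gamma_2 \in \tau_{\geq K^*} \Omega^{m-r}(L)$, a direct computation using that $\phi_U$ is a diffeomorphism together with Fubini's theorem yields
\[ \int_{p^{-1}(U)} \omega \wedge \eta = \pm \int_{U \times L} \pi_1^* e \wedge \pi_2^* (\gamma_1 \wedge \gamma_2) = \pm \Bigl ( \int_U e \Bigr ) \cdot \int_L \gamma_1 \wedge \gamma_2 = \pm \int_L \gamma_1 \wedge \gamma_2. \]
Since the homotopy equivalences $S_x^*$ and $e_*$ identify arbitrary cohomology classes with classes represented by such product forms, this calculation shows that the integration pairing of the lemma agrees, up to a global sign, with the pairing $([\gamma_1], [\gamma_2]) \mapsto \int_L \gamma_1 \wedge \gamma_2$ on the cohomology of the (co)truncated complexes of $L$.

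Finally, the latter pairing is nondegenerate by \cite[Proposition 4.7]{BandR} (Poincar\'e duality for the fiber (co)truncation complexes of the closed Riemannian manifold $L$), using that the perversity complementarity $\pp(m+1) + \qq(m+1) = m-1$ translates into the cutoff complementarity $K + K^* = m+1$ required there. Non-degeneracy of the pairing on $L$ therefore transfers back to nondegeneracy of the integration pairing stated in the lemma.

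The only real point of technical care is the boundary case $U \cong \R^n_+$: one must ensure that the support condition of $\ftgqOMSc (U, U \cap C_{\dB})$ is preserved under the homotopy equivalences and that the Fubini-type factorization above remains valid on the half-space. The former is guaranteed by the choice of $e$ with support in the complement of $C_{\dB}$, and the latter reduces to the standard calculation on $\R^n_+$ since the forms in question have compact support within the half-space. Modulo these bookkeeping points the proof is a routine combination of the two Poincar\'e Lemmas and the duality on $L$.
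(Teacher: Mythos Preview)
Your proposal is correct and follows essentially the same approach as the paper: reduce to the fiber $L$ via the two Poincar\'e Lemmas \ref{lemma:PLfibtr} and \ref{lemma:PLrc}, verify that the integration pairing transfers (the paper packages your Fubini computation as commutativity of a square), and invoke the known duality between $\tau_{<K}\Omega^\bullet(L)$ and $\tau_{\geq K^*}\Omega^\bullet(L)$. The only discrepancy is the citation for the fiber duality --- the paper cites \cite[Lemma 5.7]{BandR} rather than Proposition 4.7 --- but the underlying input is the same.
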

\begin{proof}
The map
\[
\begin{split}
	\int : H^r \bigl (\tau_{<K} \Omega^\bullet (L) \bigr ) & \rightarrow H^{m-r} \bigl ( \tau_{\geq K^*} \Omega^\bullet (L) \bigr )^\dagger \\
	[ \omega ] & \mapsto \int_L \_ \wedge \omega
\end{split}
\]
is an isomorphism by \cite[Lemma 5.7]{BandR}. To conclude the argument, we use the results of the Lemmata \ref{lemma:PLfibtr} and \ref{lemma:PLrc} and the commutativity of the following diagram.
\[
\begin{tikzcd}
	H^r \bigl ( \ftsOMS (U) \bigr ) \ar{d}{\int} &~	& H^r \bigl ( \tau_{<K} \Omega^\bullet (L) \bigr ) \ar{ll}{(\phi \circ \pi_2)^*}[swap]{\cong} \ar{d}{\int}[swap]{\cong} \\
	H^{b+m-r} \bigl ( \ftgqOMSc (U, U \cap C_{\dB} ) \bigr )^\dagger & ~ &H^{m-r} \bigl ( \tau_{\geq K^*} \Omega^\bullet (L) \bigr )^\dagger \ar{ll}{({\pi_2}_* \circ {\phi^{-1}}^*)^\dagger}[swap]{\cong} 
\end{tikzcd}
\]
\end{proof}
Now we have all the tools to establish Proposition \ref{prop:PD_ftMS}\\
\emph{Proof of Proposition \ref{prop:PD_ftMS}:}
By Remark \ref{rem:CSonB}, the statement of the proposition is equivalent to the statement that integration induces a map
\[ \int: H^r \bigl ( \ftsOMS (B) \bigr ) \times H^{b+m-r} \bigl ( \ftgqOMSc (B, C_{\dB}) \bigr ) \rightarrow \R  \]
that is nondegenerate for all \( r \).\\
In fact, we prove that the bilinear map
\[ \int: H^r \bigl ( \ftsOMS (U) \bigr ) \times H^{b+m-r} \bigl ( \ftgqOMSc (U, U \cap C_{\dB}) \bigr ) \rightarrow \R \]
is nondegenerate for all \( r \) and all open subsets \( U \subset B \) of the form
\(
	U = \bigcup_{i=1}^s U_{\alpha_0^i ... \alpha_{p_i}^i},
\)
with \( s \leq |I| \) and $U_{\alpha_0^i ... \alpha_{p_i}^i} := U_{\alpha_0^i} \cap \ldots \cap U_{\alpha_{p_i}^i}$, by an induction on \( s \). \\
For \( s = 1 \) the statement was already proven in Lemma \ref{lemma:localPDMS}. 
The induction step follows from the bootsmap principle of Lemma \ref{lemma:bootsmapMS}, compare to \cite[Prop. 5.12]{BandR}.
This finishes the proof, since \( B \) is the finite union \( B = \bigcup_{\alpha \in I} U_\alpha \).
\BE

\subsection{Integration on \texorpdfstring{$ \widetilde{\Omega I}^\bullet_{\pp}(M) $}{partial Omega I}}

\begin{lemma}
For any \( r \in \mathbb{Z} \), integration defines a bilinear form 
\[
	\int : \Omega_{E \MS}^r (M) \times \Omega_{E \MS}^{n-r} (M) \rightarrow \R.
\]
\end{lemma}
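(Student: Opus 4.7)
The claim is essentially an existence/well-definedness statement, so the plan is short. First I would observe that $M$ is, by construction, a \emph{compact} oriented $\langle 2 \rangle$-manifold of dimension $n$: it arises as the blowup of the compact Thom--Mather-stratified pseudomanifold $X$ after removing open tubular neighbourhoods of the two singular strata, as described in Subsection~\ref{subs:3strata}. The orientation of $X$ on its top stratum extends uniquely to an orientation of $M$, since the top stratum is an open dense subset of $M$.

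Next, I would note that by Definition~\ref{def:EMS} the complex $\Omega_{E\MS}^\bullet(M)$ is a subcomplex of $\Omega^\bullet(M)$; in particular any $\omega \in \Omega_{E\MS}^r(M)$ and $\eta \in \Omega_{E\MS}^{n-r}(M)$ are smooth differential forms on the manifold with corners $M$. Their wedge product $\omega \wedge \eta \in \Omega^n(M)$ is therefore a smooth top-degree form on the compact oriented manifold with corners $M$, and integration of such a form over $M$ is well-defined and finite by the standard theory of integration on manifolds with corners (where corners form a set of measure zero and do not affect integrability).

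Finally, bilinearity of the map
\[
(\omega,\eta) \mapsto \int_M \omega \wedge \eta
\]
is immediate: the wedge product $\wedge : \Omega^r(M) \times \Omega^{n-r}(M) \to \Omega^n(M)$ is $\R$-bilinear, and $\int_M : \Omega^n(M) \to \R$ is $\R$-linear, so their composition restricts to an $\R$-bilinear form on the subspaces $\Omega_{E\MS}^r(M) \times \Omega_{E\MS}^{n-r}(M)$.

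There is no real obstacle here; the lemma serves merely to fix notation for the pairing that will be used in the subsequent Poincaré-duality arguments (in particular in Theorem~\ref{prop:PDonwOI}). The substantive work, namely showing that this pairing descends to cohomology and is nondegenerate on the appropriate subquotients, is carried out in the following sections via the distinguished triangles of Lemmas~\ref{lemma:disttri_tilde_1} and \ref{lemma:disttri_tilde_2} and the five-lemma argument.
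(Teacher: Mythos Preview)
Your proof is correct and follows exactly the paper's approach: the paper's own proof is a single sentence noting that bilinearity is obvious and finiteness of the integral follows from compactness of $M$. Your version simply spells out these two observations in more detail.
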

\begin{proof} Bilinearity is obvious and the finiteness of the integral is ensured by the compactness of \( M \).
\end{proof}

\begin{cor}
	For any \( r \in \mathbb{Z} \), integration defines bilinear forms 
	\[
		\int :	\widetilde{\Omega I}_{\pp}^r (M) \times \widetilde{\Omega I}_{\pp}^{n-r} (M,C_W) \rightarrow \R .
	\]
\end{cor}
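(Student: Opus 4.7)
The plan is to observe that this corollary follows immediately from the preceding lemma by restricting the integration pairing to subcomplexes. All the real work has already been done in establishing the pairing on $\Omega_{E\MS}^\bullet(M) \times \Omega_{E\MS}^\bullet(M)$, so the task reduces to checking that both $\widetilde{\Omega I}_{\pp}^\bullet(M)$ and $\widetilde{\Omega I}_{\pp}^\bullet(M, C_W)$ are subcomplexes of $\Omega_{E\MS}^\bullet(M)$.

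First I would verify the inclusion $\widetilde{\Omega I}_{\pp}^r(M) \subset \Omega_{E\MS}^r(M)$. If $\omega \in \widetilde{\Omega I}_{\pp}^r(M)$, then by Definition \ref{mydef:parOI} there is an $\eta \in ft_{\geq K}\Omega_{\MS}^\bullet(B)$ such that $c_E^* \omega = \pi_E^* \eta$. Since $ft_{\geq K}\Omega_{\MS}^\bullet(B)$ is by construction a subcomplex of $\Omega_{\MS}^\bullet(B)$, the element $\eta$ in particular lies in $\Omega_{\MS}^\bullet(B)$, which by Definition \ref{def:EMS} exactly says $\omega \in \Omega_{E\MS}^r(M)$. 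The second inclusion $\widetilde{\Omega I}_{\pp}^{n-r}(M, C_W) \subset \Omega_{E\MS}^{n-r}(M)$ is then automatic, since by Definition \ref{mydef:relparOI} the relative complex is a subspace of the absolute one.

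Given these two inclusions, the bilinear form of the preceding lemma restricts to the required pairing $\widetilde{\Omega I}_{\pp}^r(M) \times \widetilde{\Omega I}_{\pp}^{n-r}(M,C_W) \to \R$, with bilinearity inherited from bilinearity of the wedge product and linearity of the integral, and finiteness of the integral still guaranteed by compactness of $M$. Because $M$ is a compact oriented $\langle 2 \rangle$-manifold of dimension $n$, the wedge $\omega \wedge \eta$ of an $r$-form and an $(n-r)$-form is a top-degree form with finite integral.

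There is no substantive obstacle here; the corollary is a direct restriction. The only bookkeeping point worth being explicit about is the compatibility of the defining conditions of $\widetilde{\Omega I}_{\pp}^\bullet$ with the multiplicative-near-$E$ condition of $\Omega_{E\MS}^\bullet$, which holds because $ft_{\geq K}\Omega_{\MS}^\bullet(B) \hookrightarrow \Omega_{\MS}^\bullet(B)$ by construction.
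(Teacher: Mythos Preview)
Your proposal is correct and matches the paper's approach: the paper states this as an immediate corollary of the preceding lemma without giving a separate proof, and your argument---restricting the pairing on $\Omega_{E\MS}^\bullet(M)$ to the subcomplexes $\widetilde{\Omega I}_{\pp}^\bullet(M)$ and $\widetilde{\Omega I}_{\pp}^\bullet(M,C_W)$---is exactly the intended justification.
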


To be able to prove Poincar\'e duality for \( \widetilde{\OI} (M) \) we need two technical lemmas:
\begin{lemma}
	\label{lemma:tl1}
	For \( \nu_0 \in ft_{\geq K} \Omega_{\MS}^{r-1} (B) \) and \( \eta_0 \in ft_{\geq K^*} \Omega_{\MS}^{n-r} (B, C_{\dB}) \) we have 
	\[
		\int_{E} \nu_0 \wedge \eta_0 = 0.
	\]
\end{lemma}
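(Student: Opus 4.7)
The plan is to prove this by a straightforward degree-counting argument on the fiber $L$. The decisive ingredient is the arithmetic of complementary perversities: since $\pp$ and $\qq$ satisfy $\pp(m+1) + \qq(m+1) = m-1$, the cotruncation degrees from Section \ref{subs:CotrValues} satisfy
\[
K + K^* = \bigl(m - \pp(m+1)\bigr) + \bigl(m - \qq(m+1)\bigr) = m + 1 > m = \dim L.
\]

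Next I would work locally on a good open cover $\{U_\alpha\}_{\alpha \in I}$ of $B$ with respect to which the link bundle trivialises via diffeomorphisms $\phi_\alpha : p^{-1}(U_\alpha) \stackrel{\cong}{\to} U_\alpha \times L$. By Definition \ref{def:fiberwisetrun}, on each $p^{-1}(U_\alpha)$ one has
\[
\nu_0|_{p^{-1}(U_\alpha)} = \phi_\alpha^* \sum_j \pi_1^* \eta_j \wedge \pi_2^* \gamma_j, \qquad \eta_0|_{p^{-1}(U_\alpha)} = \phi_\alpha^* \sum_k \pi_1^* \eta'_k \wedge \pi_2^* \gamma'_k,
\]
with $\gamma_j \in \tau_{\geq K}\Omega^\bullet(L)$ and $\gamma'_k \in \tau_{\geq K^*}\Omega^\bullet(L)$. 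Taking the wedge product and commuting pullbacks produces the local expression
\[
(\nu_0 \wedge \eta_0)|_{p^{-1}(U_\alpha)} = \phi_\alpha^* \sum_{j,k} \pm \, \pi_1^*(\eta_j \wedge \eta'_k) \wedge \pi_2^*(\gamma_j \wedge \gamma'_k).
\]

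Each fiber factor $\gamma_j \wedge \gamma'_k$ has degree at least $K + K^* = m+1$, which exceeds $\dim L$, so it vanishes identically on $L$. Hence $\nu_0 \wedge \eta_0$ restricts to the zero form on every trivialising piece $p^{-1}(U_\alpha)$; since these cover $E$, the wedge product vanishes on all of $E$, and the integral is zero a fortiori. Observe that neither the boundary-vanishing condition on $\eta_0$ nor the flatness of the bundle enters the argument -- the statement is purely a consequence of bidegree in the multiplicative structure, so there is no serious obstacle beyond verifying the perversity arithmetic.
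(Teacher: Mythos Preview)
Your argument is correct and matches the paper's approach: the paper simply cites \cite[Lemma 7.2]{BandR}, whose proof is precisely this fiber-degree count showing that $K+K^*=m+1>\dim L$ forces each local fiber factor $\gamma_j\wedge\gamma'_k$ to vanish. Your observation that neither the relative condition on $\eta_0$ nor flatness is actually used is also accurate.
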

\begin{proof} The proof is literally the same as the proof of \cite[Lemma 7.2]{BandR}.
\end{proof}

\begin{lemma} \label{lemma:tl2}
	For \( \nu \in \widetilde{\Omega I}_{\pp}^{r-1}(M), ~ \eta \in \widetilde{\Omega I}_{\qq}^{n-r} (M,C_W) \) we have
	\[
		\int_M d( \nu \wedge \eta ) = 0.
	\]
\end{lemma}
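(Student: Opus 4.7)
The plan is a straightforward application of Stokes' theorem, splitting the boundary contribution into the two boundary parts $E$ and $W$, and then using the defining conditions of $\widetilde{\Omega I}_{\pp}^\bullet(M)$ and $\widetilde{\Omega I}_{\qq}^\bullet(M,C_W)$ together with Lemma \ref{lemma:tl1} to conclude.

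First, since $M$ is a compact $\langle 2 \rangle$-manifold with $\partial M = E \cup_{\dE = \dW} W$, Stokes' theorem (which still applies for manifolds with corners, the corner being a set of measure zero) yields
\[ \int_M d(\nu \wedge \eta) = \int_E j_E^*(\nu \wedge \eta) + \int_W j_W^*(\nu \wedge \eta). \]
For the $W$-summand, the defining condition $c_W^* \eta = 0$ for $\eta \in \widetilde{\Omega I}_{\qq}^{n-r}(M,C_W)$ implies $j_W^* \eta = 0$ (since $j_W$ factors through $c_W$), so $j_W^*(\nu \wedge \eta) = 0$ and this term vanishes.

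For the $E$-summand, we use the multiplicative structure near $E$. Write $c_E^* \nu = \pi_E^* \nu_0$ with $\nu_0 \in ft_{\geq K}\Omega^{r-1}_{\MS}(B)$ and $c_E^* \eta = \pi_E^* \eta_0$ with $\eta_0 \in ft_{\geq K^*}\Omega^{n-r}_{\MS}(B)$. Pulling back along the zero section $\sigma_E: E \hookrightarrow E \times [0,1)$ gives $j_E^* \nu = \nu_0$ and $j_E^* \eta = \eta_0$, so
\[ \int_E j_E^*(\nu \wedge \eta) = \int_E \nu_0 \wedge \eta_0. \]
To apply Lemma \ref{lemma:tl1}, it remains to verify that $\eta_0$ lies in $ft_{\geq K^*}\Omega^{n-r}_{\MS}(B, C_{\dB})$, i.e.\ that $c_{\dE}^* \eta_0 = 0$. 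This is the only nontrivial point, and it is where the compatibility of collars at the corner enters: since $c_{\dE} = c_W|_{\dW \times [0,1)}$, the image $c_{\dE}(\dE \times [0,1)) \subset E$ is contained in the collar neighbourhood $C_W$, so the vanishing $c_W^* \eta = 0$ forces $\eta|_{c_{\dE}(\dE \times [0,1))} = 0$, and therefore $c_{\dE}^* \eta_0 = c_{\dE}^* j_E^* \eta = 0$.

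With this, Lemma \ref{lemma:tl1} gives $\int_E \nu_0 \wedge \eta_0 = 0$, completing the proof. The main obstacle, though modest, is verifying the corner compatibility in the last step; the rest is bookkeeping with Stokes' theorem and the defining collar conditions.
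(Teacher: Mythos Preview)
Your proof is correct and follows essentially the same route as the paper's: Stokes' theorem, split the boundary into $E$ and $W$, kill the $W$-term via $c_W^*\eta = 0$, and kill the $E$-term via Lemma~\ref{lemma:tl1}. The only difference is that you spell out explicitly why $\eta_0 \in ft_{\geq K^*}\Omega_{\MS}^{n-r}(B, C_{\dB})$ using the collar compatibility $c_{\dE} = c_W|_{\dW \times [0,1)}$, whereas the paper simply asserts this membership; your added detail is a welcome clarification rather than a different argument.
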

\begin{proof} 
The boundary of \( M \) is
	\[
		\partial M = E \cup_{\dE} W .
	\]
Recall that $j_W: W \hookrightarrow M$ denotes the embedding of the boundary part $\dW \subset \dM$ in $M$. To prove the lemma, we compute
\[
\begin{split}
	\int_{M} d (\nu \wedge \eta ) & = \int_{\partial M} (\nu \wedge \eta)|_{\partial M} \quad \text{by Stokes' Theorem} \\
	& = \int_E \nu_0 \wedge \eta_0  +  \int_{W} j_W^* (\nu \wedge \eta ) \\
	& \qquad	\text{for some} ~ \nu_0 \in ft_{\geq K} \Omega_{\MS}^{r-1} (B), ~ \eta_0 \in ft_{\geq K^*} \Omega_{\MS}^{n-r} (B, C_{\dB}) \\
	& = 0 + \int_{W} j_W^* (\nu) \wedge j_W^* (\eta) \quad \text{by Lemma} ~ \ref{lemma:tl1}\\
		& = 0 \quad \text{since} ~ \eta \in \widetilde{\Omega I}_{\qq}^{n-r} (M,C_W).
\end{split}
\]
\end{proof}

\subsection{Poincar\'e Duality for \texorpdfstring{$ \widetilde{\Omega I}^\bullet_{\pp}(M) $}{partial Omega I}}
\begin{prop} \label{prop:intonwOI}
	For any \( r \in \mathbb{Z} \), integration on \( \wOI (M) \) induces a bilinear form 
	\[
	\begin{split}
		\int : \widetilde{ HI}_{\pp}^r (M) \times \widetilde{HI}_{\qq}^{n-r} (M,C_W) & \rightarrow \R \\
			\bigl ( [\omega] , [ \eta ] \bigr ) & \mapsto \int_M \omega \wedge \eta.
	\end{split}
\]
\end{prop}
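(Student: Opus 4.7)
The claim is that the cochain-level pairing from the preceding corollary descends to a well-defined bilinear form on cohomology. Since bilinearity is already established at the cochain level, the only thing to verify is that the value $\int_M \omega \wedge \eta$ depends only on the classes $[\omega] \in \widetilde{HI}_{\pp}^r(M)$ and $[\eta] \in \widetilde{HI}_{\qq}^{n-r}(M,C_W)$.

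So I would fix closed representatives $\omega \in \wOI^r(M)$ and $\eta \in \wOIq^{n-r}(M,C_W)$ and check the two natural cases. For the first, suppose $\omega = d\nu$ with $\nu \in \wOI^{r-1}(M)$. Using the Leibniz rule
\[
d(\nu \wedge \eta) = d\nu \wedge \eta + (-1)^{r-1} \nu \wedge d\eta = \omega \wedge \eta,
\]
since $\eta$ is closed, Lemma \ref{lemma:tl2} yields
\[
\int_M \omega \wedge \eta = \int_M d(\nu \wedge \eta) = 0.
\]
The symmetric case — replacing $\eta$ by $\eta + d\mu$ with $\mu \in \wOIq^{n-r-1}(M,C_W)$ — is handled identically: the Leibniz identity reduces $\int_M \omega \wedge d\mu$ to $\pm \int_M d(\omega \wedge \mu)$, and Lemma \ref{lemma:tl2} (applied with the roles of the two factors swapped, which is permissible because the lemma's proof only used that one factor lies in $\wOI_{\pp}$ and the other in $\wOIq(M,C_W)$, together with Lemma \ref{lemma:tl1}, which is symmetric in its two arguments) gives vanishing.

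The only nontrivial point is thus the appeal to Lemma \ref{lemma:tl2}, which already does all the work: Stokes' theorem turns the integral of an exact form on the corner-manifold $M$ into a sum of boundary integrals over $E$ and $W$; the piece over $W$ vanishes because the factor in $\wOIq(M,C_W)$ pulls back to zero along $j_W$, and the piece over $E$ reduces via the defining pullback-multiplicative condition to an integral of a product of a fiberwise cotruncated form with a relatively cotruncated complementary form on $E$, which vanishes by Lemma \ref{lemma:tl1}. Hence the pairing descends to cohomology, proving Proposition \ref{prop:intonwOI}. The main obstacle is purely notational bookkeeping — all geometric content is packaged inside Lemmas \ref{lemma:tl1} and \ref{lemma:tl2} — so no further technical work is required here.
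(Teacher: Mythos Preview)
Your proof is correct and follows essentially the same approach as the paper: both reduce well-definedness on cohomology to the vanishing statement of Lemma \ref{lemma:tl2} via the Leibniz rule. One minor remark: in the second case no role-swapping is actually needed, since Lemma \ref{lemma:tl2} applies directly with $\nu = \omega \in \widetilde{\Omega I}_{\pp}^{(r+1)-1}(M)$ and $\eta = \mu \in \widetilde{\Omega I}_{\qq}^{n-(r+1)}(M,C_W)$.
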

\begin{proof} Let \( \omega \in \widetilde{\Omega I}_{\pp}^r (M) \) closed, \( \widetilde{\omega} \in \widetilde{\Omega I}_{\pp}^{r-1} (M) \), \( \eta \in \widetilde{\Omega I}_{\qq}^{n-r} (M,C_W) \) closed and \( \widetilde{\eta} \in \widetilde{\Omega I}_{\qq}^{n-r-1} (M,C_W) \).
\[
	\int_M (\omega + d \widetilde{\omega} ) \wedge \eta = \int_M \omega \wedge \eta + \int_M d( \widetilde{\omega} \wedge \eta ) = \int_M \omega \wedge \eta,
\]
where the last step holds by the previous Lemma \ref{lemma:tl2}.
By an analogous argument
\[
	\int_M \omega \wedge (\eta + d \widetilde{\eta} ) = \int_M \omega \wedge \eta.
\]

\end{proof}

\begin{lemma} \label{lemma:EMSqis}
	The subcomplex inclusion 
	\[
		\OEMS (M) \hookrightarrow \Omega^\bullet (M)
	\]
	is a quasi-isomorphism.
\end{lemma}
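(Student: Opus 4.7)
The plan is to compare the inclusion $\OEMS (M) \hookrightarrow \Omega^\bullet(M)$ with the analogous inclusion $\OMS (B) \hookrightarrow \Omega^\bullet(E)$, which is itself a quasi-isomorphism for flat bundles by the standard argument using the $E_2$-degeneration of the Leray--Serre spectral sequence (see \cite[Theorem 5.1]{BanFlatBdles} and the way multiplicatively structured forms are used throughout \cite{BandR}). The comparison is carried out with a five-lemma argument on two short exact sequences induced by the restriction map $J_E^* = \sigma_E^* \circ c_E^*$ to the boundary face $E$.

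First I would establish that $J_E^*$ restricts to surjections $\OEMS (M) \twoheadrightarrow \OMS (B)$ and $\Omega^\bullet(M) \twoheadrightarrow \Omega^\bullet(E)$, with respective kernels $\Omega^\bullet(M, C_E)$ and the subcomplex $\{ \omega \in \Omega^\bullet(M) \mid \omega|_E = 0 \}$. Surjectivity for the bottom row is the standard extension-by-bump-function argument. For the top row, given $\eta \in \OMS (B)$, I would extend $\pi_E^* \eta$ (well-defined on a slightly larger collar after reparametrizing) multiplied by a cutoff function $\chi$ which is identically $1$ on $[0,1)$ and vanishes further out, producing a form $\omega \in \Omega^\bullet(M)$ with $c_E^* \omega = \pi_E^* \eta$ on the whole original collar, hence in $\OEMS (M)$, with $\omega|_E = \eta$. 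The kernel computation in the upper row is then immediate: if $\omega \in \OEMS (M)$ satisfies $\omega|_E = 0$, then $c_E^* \omega = \pi_E^* \eta'$ with $\eta' = \sigma_E^* c_E^* \omega = 0$, so $c_E^* \omega = 0$, i.e.\ $\omega \in \Omega^\bullet(M, C_E)$.

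Arranging these into the commutative diagram of short exact sequences of complexes
\[
\begin{tikzcd}
0 \ar{r} & \Omega^\bullet(M,C_E) \ar{r} \ar{d} & \OEMS (M) \ar{r}{J_E^*} \ar{d} & \OMS (B) \ar{r} \ar{d} & 0 \\
0 \ar{r} & \{\omega \mid \omega|_E = 0\} \ar{r} & \Omega^\bullet(M) \ar{r}{J_E^*} & \Omega^\bullet(E) \ar{r} & 0
\end{tikzcd}
\]
with all vertical arrows the natural subcomplex inclusions, the five-lemma applied to the long exact sequences in cohomology reduces the problem to showing that the left and right vertical inclusions are quasi-isomorphisms. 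The right one is the flat-bundle input recalled above. The left inclusion is a standard consequence of the Poincar\'e lemma along the collar: integrating in the $t$-direction produces, for a closed form $\alpha$ vanishing on $E$, a primitive on $C_E$ which, after multiplication by a bump function supported in $C_E$ and extension by zero, yields a global $\beta \in \Omega^\bullet(M)$ with $\alpha - d\beta$ vanishing on a neighbourhood of $E$; iterating gives a representative vanishing on all of $C_E$. The main obstacle I anticipate is the careful cutoff handling in the surjectivity argument for $J_E^*|_{\OEMS(M)}$: the cutoff must be identically $1$ on the entire original interval $[0,1)$ and transition to $0$ only beyond it on the enlarged collar, so that the multiplicatively structured condition is preserved on the whole of $E \times [0,1)$ rather than merely on a smaller subcollar.
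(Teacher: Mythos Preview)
Your proof is correct and takes a different route from the paper. The paper's one-line argument simply invokes the integration-along-the-collar homotopy of \cite[Prop.~2.4]{BandR}, ``as in Lemma~\ref{lemma:MSdc}''. Read literally, that homotopy produces a retraction $\rho:\Omega^\bullet(M)\to\Omega^\bullet_{\partial C_E}(M)$ onto forms constant along the $E$-collar, but $\rho(\omega)|_E$ is not multiplicatively structured for arbitrary $\omega$, so $\rho$ does not land in $\OEMS(M)$ directly; some further input is tacitly assumed. Your five-lemma argument makes that input explicit: it is the quasi-isomorphism $\OMS(B)\hookrightarrow\Omega^\bullet(E)$ for flat bundles, which continues to hold when $B$ has boundary by the same good-cover/Mayer--Vietoris argument the paper uses in Lemmas~\ref{lemma:PLfibtr}--\ref{lemma:localPDMS}. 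What you lose is the explicit chain homotopy; what you gain is a cleanly factored argument that isolates exactly where flatness enters.

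One small point: your treatment of the left vertical arrow is a bit loose. The phrase ``iterating gives a representative vanishing on all of $C_E$'' is not quite right, since each iteration only pushes the support further into a \emph{sub}collar. Cleaner is to note that both $\Omega^\bullet(M,C_E)$ and $\{\omega:\omega|_E=0\}$ compute $H^\bullet(M,E)$: the second by the standard fact that the kernel of the surjective restriction is quasi-isomorphic to the mapping cone $\Omega^\bullet(M)\to\Omega^\bullet(E)$, the first by the collar-integration argument already used in the paper (e.g.\ the proof preceding Theorem~\ref{prop:PDonwOI}).
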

\begin{proof} As in Lemma \ref{lemma:MSdc}, we can apply the arguments of \cite[Prop. 2.4]{BandR}, integrating forms in the collar direction.
\end{proof}

\vspace{0.5cm} 
\begin{prop}
	Let \( N := M - \partial M \) and let $C = C_E \cup C_W$ be the union of the two collar neighbourhoods and define the subcomplex $\Omega^\bullet_{rel} (N)$ of forms on $N$ that vanish on $C$ as follows.
	\[ \Omega_{rel}^\bullet (N) = \Omega^\bullet (N, C) = \{ \omega \in \Omega^\bullet (N) \,|\, \omega|_{C\cap N} = 0\}. \] Then the subcomplex inclusion
	\[
		\Omega_{rel}^\bullet (N) \hookrightarrow \Omega_c^\bullet (N)
	\]
	is a quasi-isomorphism.
\end{prop}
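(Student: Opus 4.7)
The plan is to adapt the integration-along-the-collar argument of Lemma~\ref{lemma:MSdc} and Lemma~\ref{lemma:EMSqis}, applied iteratively to push forms first out of $C_E$ and then out of $C_W$. Concretely, I will construct a chain-homotopy inverse $\rho \colon \Omega_c^\bullet(N) \to \Omega_{rel}^\bullet(N)$ to the inclusion $i$ by composing two such pushes. The main technical obstacle should sit at the corner $\dE = \dW$, where one has to verify that the second push preserves the vanishing achieved by the first; this will rest on the $\langle 2\rangle$-manifold collar compatibility supplied by Corollary~\ref{cor:exnicecollars_isolated}. As setup, extend the collars slightly to $\widetilde{c}_E\colon E \times [0,3/2) \hookrightarrow M$ and $\widetilde{c}_W\colon W \times [0,3/2) \hookrightarrow M$ preserving the corner compatibility, and fix cutoff functions $\xi_E, \xi_W \colon [0,3/2) \to [0,1]$ with $\xi \equiv 1$ on $[0,1]$ and $\mathrm{supp}\,\xi \subset [0, 4/3]$, extended to $M$ by zero.

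For $\omega \in \Omega_c^\bullet(N)$, decompose $\widetilde{c}_E^{\,*}\omega = \omega_0 + dt \wedge \omega_1$ on the extended $E$-collar. Since $\omega$ has compact support in $N$, both $\omega_0$ and $\omega_1$ vanish near $t = 0$, so
\[
K_E(\omega) := \xi_E \cdot (\widetilde{c}_E^{\,-1})^* \int_0^t \omega_1 \, ds,
\]
extended by zero outside $\widetilde{c}_E(E \times [0,3/2))$, is a well-defined element of $\Omega_c^{\bullet-1}(N)$. The standard collar-integration computation yields
\[
(dK_E + K_E d)(\omega) = \omega - \rho_E(\omega), \qquad \rho_E(\omega) := (1-\xi_E)\,\omega - d\xi_E \wedge (\widetilde{c}_E^{\,-1})^* \int_0^t \omega_1 \, ds,
\]
and $\rho_E(\omega)$ vanishes on $C_E$ because $\xi_E = 1$ and $d\xi_E = 0$ there. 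Symmetrically construct $K_W$ and $\rho_W$ from the $W$-collar.

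Set $\rho := \rho_W \circ \rho_E$. By construction $\rho(\omega) = 0$ on $C_W$. The crucial claim is that $\rho_W$ preserves vanishing on $C_E$: for any $\eta \in \Omega_c^\bullet(N)$ with $\eta|_{C_E} = 0$ and any point $p = \widetilde{c}_W(w,t)$ lying in $C_E$, the collar compatibility forces the curve $s \mapsto \widetilde{c}_W(w,s)$, $s \in [0,t]$, to remain entirely inside $C_E$---in local corner coordinates with $C_E = \{u < 1\}$ and $C_W = \{v < 1\}$, the collar $\widetilde{c}_W$ moves only $v$ and leaves $u$ fixed. Hence the $dv$-component of $\widetilde{c}_W^{\,*}\eta$ vanishes along this curve, $K_W(\eta)(p) = 0$, and since $C_E$ is open the exterior derivative also vanishes there. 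Therefore every summand of $\rho_W(\eta) = \eta - dK_W\eta - K_W d\eta$ is zero on $C_E$, and it follows that $\rho$ takes values in $\Omega_{rel}^\bullet(N)$.

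Using that $\rho_W$ is a chain map, the two single-collar homotopies combine into
\[
\mathrm{id}_{\Omega_c^\bullet(N)} - \rho \; = \; d(K_W + \rho_W K_E) + (K_W + \rho_W K_E)\, d,
\]
so $i \circ \rho \sim \mathrm{id}$ on $\Omega_c^\bullet(N)$. The same corner analysis with the roles of $E$ and $W$ interchanged shows that both $K_E$ and $K_W$ send $\Omega_{rel}^\bullet(N)$ into itself, so the composite homotopy restricts to $\Omega_{rel}^\bullet(N)$ and witnesses $\rho \circ i \sim \mathrm{id}_{\Omega_{rel}^\bullet(N)}$. This makes $i$ a chain-homotopy equivalence, hence in particular a quasi-isomorphism.
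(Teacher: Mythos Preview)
Your proof is correct and rests on the same collar--integration idea as the paper, but you package it differently. The paper factors the inclusion through the intermediate complex $\Omega_c^\bullet(N,C_E)$ and, for each of the two steps, verifies injectivity and surjectivity directly on cohomology by subtracting an exact correction $d\bigl(\xi\int_0^t(\cdot)_N\bigr)$ in one collar at a time. You instead compose the two collar retractions into a single chain map $\rho=\rho_W\circ\rho_E$ and exhibit an explicit chain homotopy $K_W+\rho_W K_E$, which yields the slightly stronger conclusion of a chain--homotopy equivalence. The price is that you must make the corner compatibility explicit when checking that $\rho_W$ preserves vanishing on $C_E$; the paper needs exactly the same fact (so that its $W$--correction of a form already vanishing on $C_E$ still vanishes on $C_E$) but leaves it implicit. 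Both routes are valid, and the difference is one of presentation rather than substance.
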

\begin{proof} We factor the subcomplex inclusion $\Omega_{rel}^\bullet (N) \hookrightarrow \Omega_c^\bullet (N)$ as
\[
\Omega_{rel}^\bullet (N) \hookrightarrow \Omega_c^\bullet (N,C_E) \hookrightarrow \Omega_c^\bullet (N), 
\]
with $\Omega_c^\bullet (N, C_E) = \Omega_c^\bullet (N) \cap \Omega^\bullet (N, C_E).$ We use a standard argument to prove that both subcomplex inclusions are quasi-isomorphisms. Let $\epsilon > 0$ be a small number and let
\( 	\widetilde{C}_X  := c_X \bigl ( (0,1+\epsilon) \times X \bigr ) \subset N \)
denote slightly larger collar neighbourhoods of $X= E,W$ in $N.$ 
Let \( \xi_X\) be a smooth cutoff function in collar direction on \( N \) with \( \xi|_{N- \widetilde{C}_X} = 0 \) and \( c_X^* \xi =1 \), where, again, $X=E,W$.

We first prove that $H_{rel}^\bullet (N) \rightarrow H_c^\bullet (N,C_E)$ is an isomorphism.\\
\underline{Injectivity:} Let \( \omega \in \Omega_{rel}^\bullet (N ) \) be a closed form such that \( \omega = d \widehat{\eta} \) for some \( \widehat{\eta} \in \Omega_c^\bullet (N,C_E) \). To prove injectivity, we have to show that the cohomology class \( [\omega] \in H_{rel}^\bullet (N) \) is also zero. We first decompose \( \widehat{\eta}|_{\widetilde{C}_W} \) into its tangential and normal component:
$ \widehat{\eta}|_{\widetilde{C}_W} = \widehat{\eta}_T (t) + dt \wedge \widehat{\eta}_N (t). $
We then define a new form
\[
	\eta := \widehat{\eta} - d \bigl ( \xi_W \int_{0}^t \widehat{\eta}_N (\tau) d\tau \bigr ) \in \Omega^\bullet (N).
\]
This new form satisfies $d\eta = d\widehat{\eta} = \omega$ and has a vanishing normal component, since
\[
c_W^* \eta = \underbrace{c_W^* \widehat{\eta} - dt \wedge \widehat{\eta}_N (t)}_{= \widehat{\eta}_T (t)} - \xi_W \int_0^t d_W \widehat{\eta}_N (\tau) d\tau =: \eta_T (t).
\]
We want to show, that $c_W^* \eta = 0.$ By assumption on $\omega,$ we get the following.
\[ 
	0 = c_W^* \omega = c_W^* \left( d \eta \right) = d (c_W^* \eta) = d_W \eta_T (t)  + dt \wedge \eta_T' (t) .
 \]
Here, $d_W$ denotes the part of the boundary operator on $C_W$ with the derivatives along $W$ and $\eta_T ' (t)$ is the derivative of $\eta_T$ in the collar direction.
Hence, \( \eta_T' (t) = 0 \), i.e. \( \eta_T \) is independent of the collar coordinate (for $t<1$). Since $\widehat{\eta}$ has compact support in $N,$ there is a $\delta > 0,$ such that $\widehat{\eta}_T (t) \equiv 0$ and $\widehat{\eta}_N (t) \equiv 0$ for all $t < \delta.$ Therefore, also $\eta (t) = \eta_T (t) \equiv 0$ for all $t<\delta$. he fact that $\eta_T$ does not depend on $t$ then gives that $c_W^* \eta  \equiv 0$ and therefore $\eta \in \Omega_{rel}^\bullet (N).$ %
\\
\underline{Surjectivity:} Let \( \widehat{\omega} \in \Omega_c^\bullet (N,C_E) \) be a closed form. We want to show that there is a closed form \( \omega \in \Omega_{rel}^\bullet (N ) \) and a form \( \widehat{\eta} \in \Omega_c^\bullet (N,C_E) \) such that \( \widehat{\omega} = \omega + d \widehat{\eta} \). As in the previous step, we decompose
\( \widehat{\omega}|_{\widetilde{C}_W} = \widehat{\omega}_T (t) + dt \wedge \widehat{\omega}_N (t) \)
and define the closed form $\omega \in \Omega^\bullet (N)$ by
\[ \omega := \widehat{\omega} - d \bigl ( \xi_W \int_{0}^t \widehat{\omega}_N (\tau) d\tau \bigr ).
\]
As before, the normal part of \( c_W^* \omega \) is zero and since \( \omega \) is closed, \( \omega_T ' (t) = 0 \) for $t<1.$ 
Also in analogue to the injectivity part, the fact that $\widehat{\omega}$ has compact support implies that there is a $\delta > 0$ with $c_W^* \omega (t) = 0 $ for all $t < \delta$ and we can deduce that \( \omega \in \Omega_{rel}^\bullet (N ).$ 
On the other hand, the relation $\widehat{\omega}_N (t) = 0 $ for every $t < \delta$ implies that the support of the form $\widehat{\eta} := \xi_W \int_0^t \widehat{\omega}_N (\tau) \, d\tau $ is compact.

\vspace{1ex}
The proof of the quasi-isomorphy of $\Omega_c^\bullet (N, C_E) \hookrightarrow \Omega_c (N)$ uses the same argument. The only difference is that we work on the collar neighbourhood of $E$ instead of $W$, use $\xi_E$ instead of $\xi_W$ and so on.
\end{proof}
\noindent
Finally we are able to prove Poincar\'e Duality for \( \wOI (M) \):
\begin{thm}(Poincar\'e duality for \( \widetilde{HI}_{\pp} (M) \)) \label{prop:PDonwOI} \\ 	
	For any \( r \in \mathbb{Z} \), the bilinear form 
\[
	\int : \widetilde{HI}_{\pp}^r (M) \times \widetilde{HI}_{\qq}^{n-r} (M,C_W) \rightarrow \R
\]
of Proposition \ref{prop:intonwOI} is nondegenerate.
\end{thm}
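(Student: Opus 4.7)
The strategy is to use the two distinguished triangles (\ref{disttri_1}) and (\ref{disttri_2}) to set up a ladder of long exact sequences and then apply the Five Lemma. Concretely, from triangle (\ref{disttri_1}) applied to $\wOI_{\pp}(M)$ we obtain a long exact sequence
\[ \ldots \to H^{r-1}(ft_{<K}\OMS(B)) \xrightarrow{\delta_1} H^r(\wOI_{\pp}(M)) \to H^r(\OEMS(M)) \to H^r(ft_{<K}\OMS(B)) \to \ldots \]
and from triangle (\ref{disttri_2}) applied to $\wOI_{\qq}(M,C_W)$ with the complementary perversity, then dualised over $\R$, we obtain a long exact sequence
\[ \ldots \to H^{r-1}(ft_{<K}\OMS(B))^{\dagger\dagger} \to \wOI_{\qq}(M,C_W)^\dagger \to \Omega_{rel}^\bullet(M)^\dagger \to \ldots \]
(in the degrees opposite to the first), where I shall identify the cohomology of the fiberwisely cotruncated complex with the linear dual of the fiberwisely truncated complex via Proposition \ref{prop:PD_ftMS}.

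\textbf{Construction of the ladder.} I align the two sequences degree by degree and take as vertical arrows the integration pairings. The outer vertical maps are already known isomorphisms: at $H^{r-1}(ft_{<K}\OMS(B))$ and $H^{r}(ft_{<K}\OMS(B))$ they are the PD isomorphisms of Proposition \ref{prop:PD_ftMS}; at $H^r(\OEMS(M))$ one uses Lemma \ref{lemma:EMSqis} to identify the source with $H^r(M;\R)$, and at $H^{n-r}(\Omega^\bullet_{rel}(M))$ one uses restriction to $N=M-\partial M$ together with Proposition 4.5.3 to identify it with $H^{n-r}_c(N;\R)$; classical Poincar\'e--Lefschetz duality for the compact oriented $\langle 2 \rangle$-manifold $M$ then supplies the required isomorphism $H^r(M;\R) \cong H^{n-r}_c(N;\R)^\dagger$ via integration. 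The middle vertical map is exactly the pairing of Proposition \ref{prop:intonwOI} whose nondegeneracy we are after.

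\textbf{Commutativity of the squares.} The squares not involving a connecting morphism commute on the nose: the horizontal maps are either subcomplex inclusions, quotient maps, or the restrictions $J_E^*, \widetilde{J_E}^*$ used in the proofs of Lemmas \ref{lemma:disttri_tilde_1} and \ref{lemma:disttri_tilde_2}, and in each case writing the pairing $\int_M \omega \wedge \eta$ and using that one factor is supported (or vanishes) away from $C_W$ or $C_E$ reduces the pairing to $\int_E \omega_0 \wedge \eta_0$, exactly as in Lemma \ref{lemma:tl1} and the computation in Lemma \ref{lemma:tl2}. The square involving the connecting morphisms is the delicate one: here I represent $\delta_1[\nu_0]$ by choosing a lift $\nu \in \OEMS(M)$ with $J_E^*\nu = \nu_0$ extending $\nu_0$ off the collar via a bump function and then taking $d\nu \in \wOI_{\pp}(M)$; an analogous construction on the dual side expresses $\delta_2^\dagger$. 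Applying Stokes' theorem on the collar $C_E$ (using that the forms involved vanish on $C_W$ in the relevant cases) transfers the boundary from one factor to the other with a controlled sign, giving commutativity up to the standard $(-1)^r$ from the differential of a wedge product.

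\textbf{Conclusion and main obstacle.} Once all squares are shown commutative up to sign, the Five Lemma applied to the ladder implies that the middle vertical map $H^r(\wOI_{\pp}(M)) \to H^{n-r}(\wOI_{\qq}(M,C_W))^\dagger$ is an isomorphism, which is precisely nondegeneracy of the integration pairing. The main technical obstacle is the connecting-morphism square: triangle (\ref{disttri_1}) is distinguished only in $\mathcal{D}(\R)$ via a roof through $\gamma_B$ and $\overline{i}\circ J_E^*$, so one must check that the PD isomorphism of Proposition \ref{prop:PD_ftMS} (which is built at the level of $\ftsOMS(B)$ and $\ftgqOMS(B,C_{\dB})$) is compatible with the identification $\gamma_B: ft_{<K}\OMS(B) \xrightarrow{qis} \OMS(B)/\ftgOMS(B)$ used in the roof, which in turn requires revisiting \cite[Lemma 6.7]{BandR} and the Hodge-theoretic splitting there; this being granted, the Stokes computation on $C_E$ finishes the job.
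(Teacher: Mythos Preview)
Your proposal is correct and follows essentially the same approach as the paper: set up a ladder from the long exact sequences of the two distinguished triangles (\ref{disttri_1}) and (\ref{disttri_2}), identify the outer vertical integration maps as isomorphisms via Proposition \ref{prop:PD_ftMS} and classical Poincar\'e--Lefschetz duality (using Lemma \ref{lemma:EMSqis} and the quasi-isomorphism $\Omega_{rel}^\bullet(N)\hookrightarrow\Omega_c^\bullet(N)$), verify commutativity of the squares by Stokes computations on $M$ and $C_E$, and conclude with the Five Lemma. One small point: there are in fact \emph{two} squares involving connecting morphisms (the paper calls them (TS) and (BS)), and both require separate Stokes arguments; your description of $\delta_1$ as ``choose a lift $\nu$ with $J_E^*\nu=\nu_0$ and take $d\nu$'' is too naive precisely because of the roof through $\gamma_B$ you flag at the end---the paper handles this by first passing to a representative $\xi\in\OEMS(M)$ whose class in $Q^\bullet(B)$ agrees with $\gamma_B(\omega)$ up to coboundary, and the resulting correction term $\alpha\in\ftgOMS(B)$ is killed by Lemma \ref{lemma:tl1}.
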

\begin{proof} \emph{First Step} 
Recall the definitions of the complexes $\OEMS$ in Definition \ref{def:EMS} and $\Omega_{rel}^\bullet (M)$ in Definition \ref{def:relderham}.
By the previous Lemma \ref{lemma:EMSqis}, the subcomplex inclusion \( \OEMS (M) \subset \Omega^\bullet (M) \) induces an isomorphism
$	H^r_{E \MS} (M) := H^r \bigl (\OEMS (M) \bigr ) \xrightarrow{\cong} H^r (M)$ 
for any \( r \in \mathbb{Z} \). The inclusion \( i : N \hookrightarrow M \) is a homotopy equivalence and hence induces an isomorphism
$i^*: H^r (M) \xrightarrow{\cong} H^r (N)$
for all \( r \in \mathbb{Z} \), as well as the isomorphism
$	i^*: H^r_{rel} (M) \xrightarrow{\cong} H_{rel}^r (N).$
Since $N$ is an open manifold, integration gives an isomorphism
\[
	\int : H^r(N) \stackrel{\cong}{\longrightarrow} H^{n-r}_c (N)^\dagger
\]
for all \( r \in \mathbb{Z} \). Combining these maps, we get the following commutative diagram for any $r\in\Z.$
\[ \begin{tikzcd}
	H^r_{E \MS} (M) \ar{r}{\cong} \ar{rd}{\int} & H^r (M) \ar{d}{\int} \ar{rr}{i^*}[swap]{\cong} & \ & H^r(N) \ar{d}{\int}[swap]{\cong}\\
	&	H^{n-r}_{rel} (M)^\dagger &  H^{n-r}_{rel} (N)^\dagger \ar{l}{{i^*}^\dagger}[swap]{\cong} & H_c^{n-r} (N)^\dagger \ar{l}{\text{incl}_*^\dagger}[swap]{\cong}
\end{tikzcd}
\]
Hence, integration induces an isomorphism
\[
	\int : H_{E \MS}^r (M) \xrightarrow{\cong} H_{rel}^{n-r} (M )^\dagger 
\]
for all \( r \in \mathbb{Z} \).

\emph{Second Step} 
By Proposition \ref{prop:PD_ftMS}, integration gives an isomorphism
\[
	\int : H^r \bigl ( \ftsOMS (B) \bigr ) \stackrel{\cong}{\longrightarrow} H^{n-r-1} \bigl ( \ftgqOMS (B, C_{\dB}) \bigr )^\dagger .
\]

\emph{Third Step}
The distinguished triangles of the two Lemmata \ref{lemma:disttri_tilde_1} and \ref{lemma:disttri_tilde_2} give the long exact sequences on cohomology, which fit into the following diagram.
\[ \begin{tikzcd}
		\vdots \ar{d} 									& 	\vdots \ar{d} 	\\
		H^{r-1} \bigl ( \ftsOMS (B) \bigr ) \ar{r}{\int}[swap]{\cong} \ar{d}{\delta} 	& H^{n-r} \bigl ( \ftgqOMS (B, C_{\dB}) \bigr )^\dagger \ar{d} \\
		\widetilde{HI}^r_{\pp} (M) \ar{r}{\int} \ar{d} 					& \widetilde{HI}^{n-r}_{\qq} (M,C_W)^\dagger \ar{d} \\
		H^r_{E \MS} (M) \ar{r}{\int}[swap]{\cong} \ar{d}{Q} 				& H_{rel}^{n-r} (M)^\dagger \ar{d}{D^\dagger} \\
		H^r \bigl ( \ftsOMS (B) \bigr ) \ar{r}{\int}[swap]{\cong} \ar{d} 		& H^{n-r-1} \bigl ( \ftgqOMS (B, C_{\dB}) \bigr )^\dagger \ar{d} \\
		\vdots 										& \vdots
	\end{tikzcd}
\]
Together with the 5-Lemma, proving that this diagram commutes (up to sign) establishes the desired result.
We first prove that the top square (TS) in the diagram commutes and therefore describe the connecting homomorphism 
$ \delta: H^{r-1} \bigl ( \ftsOMS (B) \bigr ) \rightarrow \widetilde{HI}_{\pp}^r (M). $
Let \( \omega \in ft_{<K} \Omega_{\MS}^r (B) \) closed, i.e. \( d \omega = 0 \). Then \( d \gamma_B \omega = 0 \) holds as well, where \( \gamma_B : \ftsOMS (B) \rightarrow \OMS (B)/\ftgOMS (B) \) is the quasi-isomorphism defined in equation (\ref{eq:gammaB}).
Since the map
$	\overline{i} \circ \overline{J}_E^* : Q_E^\bullet (M) \rightarrow Q^\bullet (B) $
defined in Lemma \ref{lemma:disttri_tilde_1} is a quasi-isomorphism, there is a closed form \( \overline{\omega} \in Q_E^r (M) \) as well as a form $\chi \in (\OMS (B) / \ftgOMS (B) )^r$ such that  $ \overline{i} \circ \overline{J}_E^* \overline{\omega} = \gamma_B \omega + d \chi\). 
Let \( \xi \in \Omega_{E \MS}^r (M) \) be a representative of \( \overline{\omega} \) , i.e. \( q (\xi) = \overline{\omega} \). Then \( d \xi \in \wOI (M) \) since
$	q ( d \xi ) = d q( \xi ) = d \overline{\omega} = 0.$
Hence \( ( - d \xi , \xi ) \in C^r (i) \), the mapping cone of the subcomplex inclusion \( i : \wOI (M) \hookrightarrow \Omega_{E \MS}^\bullet (M) \) with 
\( d ( -d \xi, \xi ) = 0 \). Therefore by the definition of distinguished triangles and the induced long exact cohomology sequences,
$
	\delta [\omega] = [- d \xi].$
The relation
$q( \text{incl} ~ \omega ) = \gamma_B (\omega) = \overline{i} \circ \overline{J}_E^* q (\xi) = q ( J_E^* \xi ) = q (\sigma_E^* c_E^* \xi) \in Q^r (B) $
implies that
$	\alpha := \sigma_E^* c_E^* \xi - \omega \in \ftgOMS (B).$ \\
Now, let \( \omega \in ft_{<K} \Omega_{\MS}^{r-1} (B), ~ \eta \in \widetilde{\Omega I}_{\qq}^{n-r} (M,C_W) \) be closed forms and $C_E^{1/2} := c_{E} ( E \times [0,\frac{1}{2}) )$. By the prior arguments, we get the following.
\[ \begin{split}
		\int_M \delta (\omega) \wedge \eta & = - \int_M d \xi \wedge \eta = - \int_M d (\xi \wedge \eta) \\
			& = - \int_{M - C_E^{1/2}}  d (\xi \wedge \eta) - \int_{C_E^{1/2}} d (\xi \wedge \eta) \\
					& = - \int_{M-C_E^{1/2}} d (\xi \wedge \eta),
\end{split}
\]
The second integral in the second line vanishes, since 
\[ d (\xi \wedge \eta)|_{C_E^{1/2}} = (c_E^{-1})^* \pi_E^* d (\xi_0 \wedge \eta_0) \] for some \( \xi_0 \in \Omega_{MS}^r (B) ,\) \( \eta_0 \in ft_{\geq K^*} \Omega_{\MS}^{n-r} (B, C_{\dB}) \). But
\[
	\int_{C_E} d (\xi \wedge \eta) = \text{const.} \cdot \int_E d (\xi_0 \wedge \eta_0 ) = 0
\]
as an integral of an \( n \)-form over a \( (n-1)\)-dimensional manifold. Let $C_{\dW}^{1/2} := c_{\dW} ([0, \frac{1}{2}))$ and \( J_W: W - C_{\dW}^{1/2} \hookrightarrow W \hookrightarrow M \). Then by Stokes' Theorem for manifolds with corners (see \cite[Theorem 16.25]{Lee}), we get:
\[ \begin{split}
		-\int_{M - C_E^{1/2}} d (\xi \wedge \eta) & = - \int_E \sigma_E^* c_E^* \xi \wedge \widetilde{J_E}^* \eta + \int_{W-C_{\dW}^{1/2}} J_W^* (\xi \wedge \eta) \\
			& = - \int_{E} \omega \wedge \widetilde{J_E}^* \eta - \int_E \alpha \wedge \widetilde{J_E}^* \eta = - \int_E \omega \wedge \widetilde{J_E^*} \eta, 
\end{split}
\]
where 
\[
	\int_{W-C_{\dW}^{1/2}} J_W^* (\xi \wedge \eta) = 0, 
\]
since \( \eta|_{C_W} = 0 \), and 
\[
\int_E \alpha \wedge \widetilde{J_E}^* \eta  = 0
\]
by Lemma \ref{lemma:tl1}. Note, that we used a slight abuse of notation in the top line of the equation: The boundary of $M - C_E^{1/2}$ is 
$ c_{E} (E \times \{ \frac{1}{2} \} ) \cup (W - C_{dW}^{1/2}).$
But since $c_E^* d (\xi \wedge \eta)$ is, by definition, the pullback of a form on $E \times \left\{ 0 \right\},$ the pullback of this form to $c_{E} (E \times \left\{ \frac{1}{2} \right\} ) $ is the same form as the pullback to $E$. Hence, we write $\int_E$ in this line.
The upshot of our calculations is, that (TS) commutes up to sign.

\vspace{0.2cm}
\noindent
The commutativity of the middle square (MS) is obviously fullfilled since both the vertical maps are induced by the subcomplex inclusions \( \wOI (M) \hookrightarrow \OEMS (M) \) and 
\( \Omega_{rel}^\bullet (M) \hookrightarrow \wOIq (M,C_W) \).

\vspace{0.2cm}
\noindent
To prove the commutativity of the bottom square (BS), we first investigate the connecting homomorphism
$ D: H^{n-r-1} \bigl ( \ftgqOMS (B, C_{\dB}) \bigr ) \rightarrow H_{rel}^{n-r} (M).$
We look at the distinguished triangle (\ref{disttri_2}).\\
For \( \eta \in ft_{\geq K^*} \Omega_{\MS}^{n-r-1} (B, C_{\dB}) \) closed, the surjectivity of \( \widetilde{J_E}^* \) implies that there is a form \( \overline{\eta} \in \widetilde{\Omega I}_{\qq}^{n-r-1} (M, C_W) \) such that \( \widetilde{J_E}^* \overline{\eta} = \eta \). Since \( \widetilde{J_E}^* \) is a chain map, \( d \overline{\eta} \in \ker \widetilde{J_E}^* = \Omega_{rel}^{n-r} (M) \). Let \( \rho : \Omega_{rel}^\bullet (M) \hookrightarrow \wOIq (M, C_W) \) denote the subcomplex inclusion and \( C^\bullet (\rho) \) its algebraic mapping cone. Then the map 
$ f: C^\bullet (\rho) \rightarrow \ftgqOMS (B, C_{\dB}), ~ (\tau, \sigma) \mapsto \widetilde{J_E}^* (\sigma) $
is a quasi-isomorphism (by the standard argumentation). The cocycle
$ c := ( - d \overline{\eta}, \overline{\eta} ) \in C^{n-r-1} (\rho) $
satisfies the equation \( f(c) = \widetilde{J_E}^* \overline{\eta} = \eta \) and hence \( D [\eta] \) can be described as
$	D [\eta] = [ - d \overline{\eta} ].$
We next describe the map 
\[
	Q: H^r \bigl ( \OEMS (M) \bigr ) \rightarrow H^r \bigl( \ftsOMS (B) \bigr),
\]
induced by the corresponding map in the distinguished triangle (\ref{disttri_1}). \\
Let \( \omega \in \Omega_{E \MS}^r (M) \) be a closed form. Then \( J_E^* \omega \in \Omega_{\MS}^r (B) \) represents the image of \( \omega \) under the composition
\[
	\overline{J}_E^* \circ q: \; \OEMS ( M) \rightarrow Q_E^\bullet (M) \rightarrow Q^\bullet (B). 
\]
Since \( \gamma_B: \ftsOMS (B) \rightarrow Q^\bullet (B) \) is a quasi-isomorphism, there are forms \( \overline{\omega} \in ft_{<K} \Omega_{\MS}^r (B) \), \( d \overline{\omega} = 0 \), and \( \xi \in \Omega_{\MS}^{r-1} (B) \) such that
$	\gamma_B (\overline{\omega}) = \overline{J}_E^* q (\omega) + d q_B (\xi) .$
The above map \( Q \) is then given by $ Q [\omega] = [\overline{\omega}]. $
Note that the form \( \alpha := \overline{\omega} - J_E^* \omega - d \xi \) is contained in \( \ftgOMS (B) \). We can now verify the commutativity of (BS) by proving
\begin{equation} \label{eq:tildeBS}
	\int_{M} \omega \wedge (- d \overline{\eta} ) = \pm \int_{E} \overline{\omega} \wedge \eta,
\end{equation}
with \( [- d \overline{\eta} ] = D[\eta] \) and \( [\overline{\omega} ] = Q [\omega ] \).
\[ \begin{split}
		\int_M \omega \wedge (- d \overline{\eta}) &= - \int_M \omega \wedge d \overline{\eta} = \pm \int_{M - C_E} d( \omega \wedge \overline{\eta}) - \int_{C_E} \omega \wedge d \overline{\eta} \\
		& = \pm \int_{M - C_{E}} d ( \omega \wedge \overline{\eta}) \quad (\text{since} ~ d \overline{\eta} \in \Omega_{rel}^\bullet (M)) \\
			& = \pm \int_E J_E^* \omega \wedge \widetilde{J_E}^* \overline{\eta} \pm \int_{W- C_{\dW}} J_W^* ( \omega \wedge \overline{\eta} ) \quad \text{(Stokes)} \\
			& = \pm \int_E (\overline{\omega} - \alpha - d \xi) \wedge \widetilde{J_E}^* \overline{\eta} \quad (\text{above} ~+ 
			~ \eta|_{C_W} = 0) \\
			& = \pm \int_E \overline{\omega} \wedge \eta,
\end{split}
\]
since \( \widetilde{J_E}^* \overline{\eta} = \eta \), 
\(
	\int_E \alpha \wedge \eta = 0
\)
by Lemma \ref{lemma:tl1} and
\[
	\int_E d \xi \wedge \eta = \int_E d ( \xi \wedge \eta ) 
	= \int_{\dE} \xi \wedge \eta = 0
\]
by Stokes' Theorem and since \( c_W^* \overline{\eta} = 0 \) and hence \( j_{\dE}^* \eta = 0 \).
Thus, also (BS) commutes up to sign and the theorem is proven.
\end{proof}

%%Part 5: Final de Rham OI
\section{The de Rham Intersection Complex \texorpdfstring{$ \Omega I^\bullet_{\bar{\lowercase{p}}}(M) $}{}} \label{OI_3strata}

%%Nachfolgend wird die Koabschneidung f\"ur \OI(W) besprochen 

\subsection{Truncation and Cotruncation of \texorpdfstring{$ \OI (W) $}{The Intersection Form Complex on W}} \label{subs:CotrOI}
We first note some obeservations about the boundary part \( W \subset \dM \) and the pullback of the forms in $\wOI (M)$ to $W$.
\begin{rem}\label{rem:reformOI}
	The \( (n-1) \)-dimensional compact manifold with boundary \( W \) is the blowup of the singular stratified space \( \partial X' = L_0, \) the link of $X_0$ mentioned in Section \ref{subs:3strata}. The boundary \( \dW \) of \( W \) is the total space of the flat link bundle \( q: \dW = \dE \to \dB \), with \( B = \Sigma \) the bottom stratum of the stratified pseudomanifold-with-boundary \( X' \). 
	Hence, following \cite{BandR},  we can construct the chain complex of intersection forms \( \OI (W) \) as a subcomplex of the complex of differential forms on $W$:
\[ \OI (W) := \left\{ \omega \in \Omega^\bullet (W) | c_{\dW}^* \omega = \pi_{\dW}^* \eta, ~ \text{for some}~ \eta \in \ftgOMS (\dB) \right\}. \]
We claim that the pullback of a form $\omega \in \wOI (M)$ to the boundary part $W$ is constained in $\OI (W)$. So let $\omega \in \wOI (M).$ Then $c_E^* \omega = \pi_E^* \eta $ for some $\eta \in \ftgOMS (B)$ and $j_{\dE}^* \eta \in \ftgOMS (\dB)$. This allows us to rewrite $c_{\dW}^* j_W^* \omega \in \Omega^\bullet (\dW \times [0,1)) $ as follows.
\begin{align*}
c_{\dW}^* j_W^* \omega &= j_W^* c_E^* \omega = j_W^* \pi_E^* \eta = \pi_{\dE}^* j_{\dE}^* \eta = \pi_{dE}^* j_{\dE}^* \eta. 
\end{align*}
By definition, we then have $j_W^* \omega \in \OI (W).$
\end{rem}
Since the pullback of forms in $\wOI (M)$ to $W$ is contained in $\OI (W),$ we cotruncate $\OI (W)$ in degree $L$ to define the intersection form complex $\OI (M)$ and prove Poincar\'e duality for the intersection space cohomology groups $HI_{\pp}^\bullet (X) = H^\bullet \left( \OI (M) \right).$ See Section \ref{subs:CotrValues} for the definition of the cotruncation value $L$.

\vspace{1ex}
By a cotruncation of a cochain complex $(C^\bullet, d)$ of vector spaces over some field $\K$ in degree $L \in \Z$ we mean a subcomplex $\iota: \tau_{\geq L} C^\bullet \hookrightarrow C^\bullet$ such that the subcomplex inclusion induces an isomorphism on cohomology $\iota^*: H^r \left( \tau_{\geq L} C^\bullet, d \right) \xrightarrow{\cong} H^r \left( C^\bullet, d \right)$ in degrees $r \geq L$ and $H^r (\tg C^\bullet) = 0$ for $r < L.$ 
Note that for our purposes the standard cotruncation 
\[ \ldots \to 0 \to \text{coker} \, d \to C^{L+1} \to C^{L+2} \to \ldots \]
is not suitable since it does not give rise to a subcomplex of $C^\bullet$ (although it satisfies the cohomology conditions).

\begin{ex}[Cotruncation of differential forms on a closed manifold]\label{ex:CotrHodgeClosed}
Recall that on a closed Riemannian manifold \( M \) the Hodge decomposition provides orthogonal splittings
\begin{equation}
	\Omega^r (M) = \im d \oplus \ker d^* = \im d \oplus \im d^* \oplus  \mathcal{H}^r (M) 
	\label{eq:Hodgedecomp_closedmfd}
\end{equation}
with \( \mathcal{H}^r (M) := \bigl \{ \omega ~ \bigl | ~ \Delta \omega = (d ~ d^* + d^* ~ d) \omega = 0 \bigr \} \) the harmonic \(r\)-forms on \( M\). This allows us to define the cotruncated subcomplex of smooth differential forms:
\[
	\tg \Omega^\bullet (M) : = ... \rightarrow 0 \rightarrow \ker d^* \rightarrow \Omega^{L+1} (M) \rightarrow \Omega^{L+2} (M) \rightarrow ... \subset \Omega^\bullet (M).
\]
By the Hodge decomposition this complex is a cotruncation of the complex $\Omega^\bullet (M)$ of smooth forms on $M$. In addition, the following direct sum decomposition holds.
\[
	\tg \Omega^\bullet (M) \oplus \ts \Omega^\bullet = \Omega^\bullet (M),
\]
where \( \ts \Omega^\bullet (M) := ... \rightarrow \Omega^{L-1} (M) \rightarrow \im d \rightarrow 0 \rightarrow ... \subset \Omega^\bullet (M) \), which is a truncation of $\Omega^\bullet (M).$ More details for this construction can be found in \cite[p.18]{BandR}.
\end{ex}

\begin{rem}[Cotruncation using Hodge theory]
As outlined in Section \ref{subs:DepthOne}, Banagl defines the intersection form complex $\OI (M)$ for depth one pseudomanifolds as the complex of forms on $M$ with restriction to some open neighbourhood of the boundary equaling the pullback of some fiberwisely cotruncated differential form on $\dM$. To define cotruncation on the link he uses the real Hodge decomposition as explained in Example \ref{ex:CotrHodgeClosed}. The advantage of this approach is, that he can give a condition on the fiber bundle such that the cotruncation can be imposed fiberwisely on multiplicative forms: The bundle has to be geometrically flat, i.e. the transition functions have to be isometries. 

With that in mind, it would be natural to use the generalization of the Hodge decomposition theorem to compact manifolds with boundary to cotruncate the complex of differential forms there. Combining the Hodge-Morrey and the Friedrichs decomposition on a compact manifold $W$ with boundary $\dW,$ one gets the orthogonal direct sum decomposition
\[ \Omega^r (W) = \im d^{r-1} \oplus cC_N^r (W), \]
for any $r \in \Z,$ where $cC_N^r (W) := \bigl \{ \omega \in \Omega_N^r (W) \bigl | d^* \omega = 0 \bigr \}$ is the vector space of coclosed $r$-forms that satisfy the Neumann boundary condition, i.e. with vanishing normal component near the boundary. This decomposition follows from \cite[Corollary 2.4.9]{Schwarz} and Green's formula, \cite[Prop. 2.1.2]{Schwarz}. It can be used to define a cotruncation on the complex $\Omega^\bullet (W)$ by
\[ \widetilde{\tau}_{\geq L} \Omega^\bullet (W) := \ldots \to 0 \to cC_N^L (W) \to \Omega^{L+1} (W) \to \Omega^{L+2} (W) \to \ldots. \]
Again, if $W$ were the link of a flat fiber bundle, the transition functions that are isometries on $W$ would map cotruncated forms to cotruncated forms. The reason for not using this construction is that it is dubious, whether intersecting this complex with $\OI (W)$ gives a cotruncation of $\OI (W)$ or not. That means that it is unclear whether the Hodge-Morrey-Friedrichs decomposition also produces a complement of $d^{L-1} \left( \Omega I_{\pp}^{L-1} (W) \right)$ in $\Omega^L (W)$ (note, that this would imply that the subcomplex inclusion $\iota: \OI (W) \hookrightarrow \Omega^\bullet (W)$ induced an injection on the $L$-th cohomology groups). Therefore we have to use an alternative approach to cotruncate $\OI (W)$ in degree $L$. 
%%Kommentar: unklar, ob hier der Kommentar noch ausgeführt werden soll.
%If this were the case, then $\Omega I_{\pp}^L (W) \cap cC_N^L (W)$ would be an orthogonal complement to $d^{L-1} \left( \Omega I_{\pp}^{L-1} (W) \right)$ in $\Omega I_{\pp}^L (W)$ and, after pulling back to the boundary $\dW$, the vector space of fiberwisely cotruncated multiplicative forms that are coclosed would be a complement of $\im d^{L-1}.$ But this seems 
\end{rem}

\begin{lemma}[Cotruncation of $\OI (W)$ in degree $L$]\label{lemma:CotrOIW}
Choose a complement $\mathfrak{C}^L \subset \OI (W)$ of $d^{L-1} \left( \Omega I_{\pp}^{L-1} (W) \right)$ in $\OI (W)$ and set 
\[ \tg \OI (W) := \ldots \to 0 \to \mathfrak{C}^L \to \Omega I_{\pp}^{L+1} (W) \to \Omega I_{\pp}^{L+2} (W) \to \ldots \]
Then $\tg \OI (W) \subset \OI (W)$ is a cotruncation in degree $L$ and, moreover, there is a direct sum decomposition 
\[ \OI (W) = \ts \OI (W) \oplus \tg \OI (W), \]
where 
\[ \ts \OI (W) := \ldots \Omega I_{\pp}^{L-2} (W) \to \Omega I_{\pp}^{L-1} (W) \to d^{L-1} ( \Omega I_{\pp}^{L-1} ) \to 0 \to \ldots \]
 is the standard truncation of $\OI (W)$ in degree $L$.
\end{lemma}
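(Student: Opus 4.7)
The plan is a short algebraic verification; the statement is essentially a linear-algebra reformulation and there is no deep obstacle.

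First I would note that a complement $\mathfrak{C}^L$ exists simply because we work over $\R$. Then $\tg \OI (W)$ is a well-defined subcomplex of $\OI (W)$: since $\mathfrak{C}^L \subset \Omega I_{\pp}^L (W)$, the differential maps $\mathfrak{C}^L$ into $\Omega I_{\pp}^{L+1} (W)$, and in all other degrees the complex either coincides with $\OI (W)$ or is zero, so the only case requiring a check is degree $L$, which is immediate. The analogous check makes $\ts \OI (W)$ a subcomplex as well, using $d \circ d = 0$ to see that $d$ vanishes on $d^{L-1} ( \Omega I_{\pp}^{L-1} (W) )$.

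Next I would establish the degreewise vector-space decomposition $\OI (W) = \ts \OI (W) \oplus \tg \OI (W)$. For $r < L$ only $\ts \OI (W)$ contributes and equals $\Omega I_{\pp}^r (W)$; for $r > L$ only $\tg \OI (W)$ contributes and equals $\Omega I_{\pp}^r (W)$; and in degree $L$ the equality $\Omega I_{\pp}^L (W) = d^{L-1} ( \Omega I_{\pp}^{L-1} (W) ) \oplus \mathfrak{C}^L$ is exactly the defining property of $\mathfrak{C}^L$. Because both summands are themselves subcomplexes, the differential of $\OI (W)$ respects the decomposition automatically, so this is in fact a direct sum of cochain complexes, yielding $H^r ( \OI (W) ) \cong H^r ( \ts \OI (W) ) \oplus H^r ( \tg \OI (W) )$ for every $r$.

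Finally, to confirm the cotruncation property, I would compute the cohomology of the two summands directly. The complex $\tg \OI (W)$ vanishes in degrees below $L$, so its cohomology there is zero. For $\ts \OI (W)$, the cochains in degrees above $L$ are zero, while in degree $L$ every element of $\ts \Omega I_{\pp}^L (W) = d^{L-1} ( \Omega I_{\pp}^{L-1} (W) )$ is tautologically a coboundary already inside $\ts \OI (W)$. Hence $H^r ( \ts \OI (W) ) = 0$ for $r \geq L$. Combining this vanishing with the direct-sum decomposition, the subcomplex inclusion $\tg \OI (W) \hookrightarrow \OI (W)$ induces isomorphisms on $H^r$ for $r \geq L$ and computes $H^r ( \tg \OI (W) ) = 0$ for $r < L$, which is precisely the required cotruncation property. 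The only conceptual caveat worth flagging is that $\mathfrak{C}^L$ is non-canonical, so any later construction built on $\tg \OI (W)$ will carry that choice; but this is a downstream matter and does not affect the lemma itself.
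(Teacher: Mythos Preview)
Your proof is correct and follows essentially the same route as the paper's: both arguments rest on the degreewise splitting $\Omega I_{\pp}^L(W) = d^{L-1}(\Omega I_{\pp}^{L-1}(W)) \oplus \mathfrak{C}^L$ and are elementary linear algebra. The only organizational difference is that you first establish the direct-sum decomposition of complexes and read off the cohomology isomorphism from $H^r(\OI(W)) \cong H^r(\ts\OI(W)) \oplus H^r(\tg\OI(W))$ together with the vanishing of $H^r(\ts\OI(W))$ for $r\geq L$, whereas the paper verifies the isomorphism $\iota^*$ directly in the two nontrivial degrees $r=L$ and $r=L+1$ and records the direct sum afterward; neither approach offers a real advantage over the other.
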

\begin{proof}
It is clear, that $H^r (\tg \OI (W)) = 0 $ for $r<L$ and $H^r (\tg \OI (W) ) \cong H^r (\OI (W))$ for $r \geq L+2.$ The only interesting degrees are $r=L$ and $r = L+1.$ \\
\underline{Degree $r=L$:} $\iota^*: H^L (\tg \OI (W)) \to H^L (\OI (W))$ is injective, since no nonzero cycles in $\mathfrak{C}^L = \tau_{\geq L} \Omega I_{\pp}^L (W)$ are boundaries. Let $\omega \in \Omega I_{\pp}^L (W)$ be closed. Then, since $\Omega I_{\pp}^L (W) = \mathfrak{C}^L + \im d^{L-1},$ $\omega = \eta + d \alpha$ with $\eta \in \mathfrak{C}^L$ closed. Hence $\iota^* [\eta] = [\omega]$ and thus $\iota^*$ is also surjective.\\
\underline{Degree $r=L+1$:} It is obvious, that $\iota^*: H^{L+1} (\tg \OI (W)) \to H^{L+1} (\OI (W) )$ is surjective, since $\tau_{\geq L} \Omega I_{\pp}^{L+1} (W) = \Omega I_{\pp}^{L+1} (W)$. We prove that it is also injective. Let $\omega = d \alpha \in \Omega I_{\pp}^{L+1} (W)$ be a boundary. By the definition of $\mathfrak{C}^L$, there exist $\eta \in \mathfrak{C}^L$ and $\beta \in \Omega I_{\pp}^{L-1} (W)$ with $\alpha = \eta + d \beta.$ Then $\omega = d \alpha = d ( \eta + d \beta) = d \eta$, so $[\omega] = 0 \in H^{L+1} (\tg \OI (W) )$ and $\iota^*$ is injective.\\
The direct sum decomposition of $\OI (W)$ holds, since $\mathfrak{C}^L$ is a complement of $d^{L-1} ( \Omega I_{\pp}^{L-1} (W) )$ in $\Omega I_{\pp}^L (W)$ and in all other degrees either the truncated complex is zero and the cotruncated complex contains all forms or vice versa.
\end{proof}

\subsection{Definition of the de Rham Intersection Complex}
\begin{mydef}[The de Rham intersection complex \( \OI (M) \)]
Take the cotruncated complex $\tg \OI (W)$ of Lemma \ref{lemma:CotrOIW}. We define the intersection form complex $\OI (M)$ on $M$ as follows.
	\[
		\OI (M) := \bigl \{ \omega \in \wOI (M) \bigl |~
			\exists ~ \eta \in \tg \OI (W):
			c_W^* \omega = \pi_W^* \eta
			  \bigr \}.
	\]
\end{mydef}

We start to give the preparational material for the proof of Poincar\'e Duality for \( \OI (M) \):
\begin{lemma} \label{lemma:disttriOI1}
The following sequences are short exact.
\begin{align*}
0 \longrightarrow  \widetilde{\OI} (M,C_W) \longrightarrow  \OI (M) \xrightarrow{j_W^*}  \tau_{\geq L} \Omega I_{\pp}^\bullet (W) \longrightarrow  0 \\
0 \longrightarrow \OI (M) \longrightarrow 		  \wOI (M) \xrightarrow{\proj \circ j_W^*} \ts \OI (W) \longrightarrow 0
\end{align*}
where $\proj: \OI (W) = \ts \OI (W) \oplus \tg \OI (W) \to \ts \OI (W)$ is the projection on the first factor. 
\end{lemma}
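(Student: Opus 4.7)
The plan is to establish both short exact sequences in parallel, first identifying the kernels through a common algebraic identity and then constructing right-inverses to both surjections by a single ``enlarged collar plus bump function'' argument.

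First I would exploit the identity $j_W^* = \sigma_W^* \circ c_W^*$, where $\sigma_W : W \hookrightarrow W \times [0,1)$ denotes the zero-section. For any $\omega \in \wOI (M)$, the condition $c_W^* \omega = \pi_W^* \rho$ then forces $\rho = j_W^* \omega$, so the tangential datum is recorded faithfully by $j_W^*$. For the first sequence this yields at once that $\omega \in \OI (M)$ with $j_W^* \omega = 0$ has $c_W^* \omega = 0$, placing $\omega \in \widetilde{\OI} (M, C_W)$; the reverse inclusion is immediate, and the target $\tg \OI (W)$ of $j_W^*|_{\OI (M)}$ follows directly from the definition of $\OI (M)$. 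For the second sequence, Remark \ref{rem:reformOI} shows $j_W^* \wOI (M) \subseteq \OI (W)$, so $\proj \circ j_W^*$ is a well-defined chain map into $\ts \OI (W)$; its kernel consists of those $\omega$ with $j_W^* \omega \in \tg \OI (W)$, which by the definition of $\OI (M)$ is exactly $\OI (M)$.

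Next I would construct the right-inverse. Given $\eta \in \OI (W)$ -- to be taken in $\tg \OI (W)$ for the first sequence and in $\ts \OI (W) \subset \OI (W)$ for the second -- extract $\zeta \in \ftgOMS (\dB)$ from the $\OI$-structure, i.e.\ $c_{\dW}^* \eta = \pi_{\dW}^* \zeta$. I would then take a slight extension $\widetilde{c}_W : W \times [0, 1+\epsilon) \hookrightarrow M$ of $c_W$, as in the proof of Lemma \ref{lemma:MSdc}, together with a smooth cutoff $\chi : [0, 1+\epsilon) \to [0,1]$ with $\chi \equiv 1$ on $[0,1]$ and $\chi \equiv 0$ near $1+\epsilon$, and set
\[
\omega := (\widetilde{c}_W^{-1})^* \bigl( \chi \cdot \pi_W^* \eta \bigr)
\]
on $\widetilde{C}_W := \im \widetilde{c}_W$, extended by zero to all of $M$. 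The tapering of $\chi$ secures smoothness on $M$, and $c_W^* \omega = \pi_W^* \eta$ holds on $W \times [0,1)$ because $\chi \equiv 1$ there, so $j_W^* \omega = \eta$ by the identity above; for the second sequence one additionally notes $\proj (\eta) = \eta$ since $\eta \in \ts \OI (W)$.

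The main obstacle will be showing $\omega \in \wOI (M)$, i.e.\ $c_E^* \omega = \pi_E^* \alpha$ for some $\alpha \in \ftgOMS (B)$, because the construction only manipulates the $W$-collar. My plan is to exploit two facts: the $\OI$-structure of $\eta$ forces $\eta$ to be tangentially constant near $\dW$, equal to the pullback of $\zeta$; and $c_E (e, t)$ lies in $\widetilde{C}_W$ precisely when $e$ lies in the (appropriately extended) $\dE$-collar of $E$. A direct calculation then identifies $c_E^* \omega = \pi_E^* \alpha$ with
\[
\alpha := (\widetilde{c}_{\dE}^{-1})^* \bigl( \chi \cdot \pi_{\dE}^* \zeta \bigr) \text{ extended by zero on } E,
\]
and by Remark \ref{rem:prelColTriv} together with the flatness of the link bundle $p: E \to B$, $\alpha$ lies in $\ftgOMS (B)$. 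This confirms $\omega \in \wOI (M)$, and when $\eta \in \tg \OI (W)$ also $\omega \in \OI (M)$, completing surjectivity for both sequences.
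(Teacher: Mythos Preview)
Your proof is correct and follows the same approach as the paper: identify the kernels from the defining conditions and obtain surjectivity via the enlarged-collar-plus-bump-function construction. The paper's own proof is much terser --- it simply asserts surjectivity of $j_W^*$ and does not address surjectivity of $\proj \circ j_W^*$ at all --- whereas you carry out the construction explicitly and, in particular, verify the nontrivial point that the extended form satisfies the $E$-collar condition using Remark~\ref{rem:prelColTriv} and the flatness of $p: E \to B$.
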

\begin{proof} For the first sequence, note that the kernel of the surjective map \( j_W^* : \OI (M) \to \tau_{\geq L} \Omega I_{\pp}^\bullet (W) \) is 
\[ \bigl \{ \omega \in \OI (M) \bigl | c_W^* \omega = 0 \bigr \} = \widetilde{\OI} (M,C_W ). \]
To prove that the second sequence is also exact, note that, by definition, the pullback to $W$ maps forms in $\OI (M)$ to forms in $\tg \OI (W).$ Hence,
\[ \ker \left( \wOI (M) \xrightarrow{j_W^*} \OI (W) \xrightarrow{\proj} \ts \OI (W) \right) = \OI (M). \]
\end{proof}

\begin{lemma}\label{lemma:PDHIW}
Integration induces a nondegenerate bilinear form
	\[
		\int : HI_{\pp}^r (W) \times HI_{\qq}^{n-1-r} (W) \to \R.
	\]
\end{lemma}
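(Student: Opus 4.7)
The plan is to recognise that the pair $\bigl( W, \OI(W) \bigr)$ fits exactly into the depth-one setting of \cite{BandR}, so that the asserted Poincar\'e duality follows by a direct appeal to \cite[Theorem 8.2]{BandR}. There is essentially nothing to prove beyond checking that all the hypotheses of that theorem are satisfied in the present situation.

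First, I would verify that $W$ is (the blowup of) a depth-one stratified pseudomanifold with geometrically flat link bundle. By Section~\ref{subs:3strata}, $L_0 = \partial X'$ is a compact oriented stratified pseudomanifold of dimension $n-1$ with a single singular stratum $\dB$, and $W$ is its blowup in the sense of Section~\ref{subs:DepthOne}, i.e.\ a compact oriented smooth manifold with boundary $\dW = \dE$. Orientability follows since $M$ is oriented. By Section~\ref{subs:bdleprop}, the restriction $q = p|_{\dE}: \dW \to \dB$ of the geometrically flat link bundle $p: E \to B$ to the closed boundary $\dB$ is itself a geometrically flat bundle with closed Riemannian fiber $L^m$.

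Second, I would check that the complex $\OI(W)$ of Remark~\ref{rem:reformOI} literally agrees with the intersection form complex of \cite[Section 7]{BandR} associated to the depth-one pseudomanifold $L_0$ and perversity $\pp$, and that the cotruncation degree coincides. The former is immediate from the defining formula
\[
    \OI(W) = \bigl\{ \omega \in \Omega^\bullet(W) \bigm| c_{\dW}^* \omega = \pi_{\dW}^* \eta \text{ for some } \eta \in \ftgOMS(\dB) \bigr\},
\]
together with the fact (Corollary~\ref{cor:exnicecollars_isolated} and the subsequent remark) that the Thom--Mather control data supply a collar of $\dW$ in $W$ and a collar of $\dB$ in $\dB$'s base that are $q$-related. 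For the latter, note that the singular stratum $\dB$ of $L_0$ has dimension $b-1$, hence codimension $(n-1)-(b-1)=m+1$ in $L_0$; the Banagl cutoff $m-\pp(m+1)$ for this stratum therefore equals the value $K$ of Section~\ref{subs:CotrValues}. Complementarity of $\pp$ and $\qq$ on codimension $m+1$ is the same on $L_0$ as on $X$.

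Third, applying \cite[Theorem 8.2]{BandR} to $L_0$ with complementary perversities $\pp,\qq$ yields directly that integration induces a nondegenerate bilinear pairing
\[
    \int : HI_{\pp}^r(W) \times HI_{\qq}^{(n-1)-r}(W) \longrightarrow \R,
    \qquad \bigl([\omega],[\eta]\bigr) \longmapsto \int_W \omega \wedge \eta,
\]
for every $r \in \Z$, which is the claim. (If $X_0$ consists of several points so that $L_0$ and $W$ decompose into disjoint components, the same statement holds componentwise and hence on $W$.) There is no genuine obstacle in this argument; the only potential pitfall is a bookkeeping error in the identification of cutoff values and collars, which the previous checks rule out.
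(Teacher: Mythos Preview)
Your proposal is correct and follows exactly the paper's approach: the paper's proof is the one-line observation that $\OI(W) \cong \OI(W-\dW)$ followed by an appeal to \cite[Theorem~8.2]{BandR}. You have simply spelled out in more detail the verifications (geometric flatness of $q=p|_{\dE}$, matching of the cutoff value $K$, compatibility of collars) that the paper leaves implicit.
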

\begin{proof} Notice that \( \OI (W) \cong \OI (W - \dW) \) and consider \cite[Theorem 8.2]{BandR}.
\end{proof}

\begin{lemma}
	\label{lemma:PDtrHIW}
	Also integration induces a nondegenerate bilinear form
	\[
		\int : H^r \bigl (\ts \OI (W) \bigr ) \times H^{n-1-r} \bigl ( \tgq \OIq (W) \bigr ) \to \R.
	\]
\end{lemma}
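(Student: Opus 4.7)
The plan is to deduce this from the Poincar\'e duality for $HI_{\pp}^\bullet(W)$ established in Lemma~\ref{lemma:PDHIW}, by exploiting the complementary vanishing ranges of the truncated and cotruncated complexes.

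First I would record the arithmetic identity $L+L^* = n$. Since $\pp$ and $\qq$ are complementary, $\pp(n)+\qq(n)=n-2$; combined with $L=n-1-\pp(n)$ and $L^*=n-1-\qq(n)$ from Section~\ref{subs:CotrValues}, this gives $L+L^* = 2(n-1)-(n-2)=n$.

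Next I would compute the cohomology of each factor. By the definition of a cotruncation recalled just before Lemma~\ref{lemma:CotrOIW} (and its obvious analog for $\qq$ and $L^*$), the subcomplex inclusion $\tgq\OIq(W)\hookrightarrow\OIq(W)$ induces isomorphisms $H^{s}(\tgq\OIq(W))\cong HI_{\qq}^{s}(W)$ for $s\geq L^*$ and vanishes for $s<L^*$. For the standard truncation, a straightforward direct inspection of the description of $\ts\OI(W)$ in Lemma~\ref{lemma:CotrOIW} gives $H^{r}(\ts\OI(W))\cong HI_{\pp}^{r}(W)$ for $r<L$ and $H^{r}(\ts\OI(W))=0$ for $r\geq L$.

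The remainder is a case distinction on $r$. If $r\geq L$, then $n-1-r\leq L^*-1$, so both sides of the pairing vanish and nondegeneracy is vacuous. If $r<L$, then $n-1-r\geq L^*$, so both subcomplex inclusions induce isomorphisms on the relevant cohomology groups. Since the pairing $\int_W \omega\wedge\eta$ on the subcomplexes is by construction the restriction of the pairing on $\OI(W)\times\OIq(W)$, it is identified via these isomorphisms with the Poincar\'e duality pairing of Lemma~\ref{lemma:PDHIW}, which is nondegenerate. I do not expect any serious obstacle; the only minor point is verifying that the subcomplex inclusions induce the claimed cohomology isomorphisms, which is essentially tautological for $\tgq\OIq(W)$ and a short diagram chase for $\ts\OI(W)$.
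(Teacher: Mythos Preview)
Your proposal is correct and follows essentially the same approach as the paper: a case split on whether $r\geq L$ (both groups vanish by the arithmetic $L+L^*=n$) or $r<L$ (both subcomplex inclusions induce isomorphisms, reducing to Lemma~\ref{lemma:PDHIW}). The paper's proof is terser but identical in substance.
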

\begin{proof} For \( r \geq L \) we have that \( n-1-r < L^* \) and both complexes are zero and therefore also the cohomology groups. For \( r < L \) we have that \( n-r-1 \geq L^* \) and hence
$	H^r \bigl ( \ts \OI (W) \bigr ) = HI_{\pp}^r (W) $
as well as
$	H^{n-1-r} \bigl ( \tgq \OIq (W) \bigr ) \cong HI_{\qq}^{n-1-r} (W).$
Therefore we traced back the statement of the lemma to the result of the previous Lemma \ref{lemma:PDHIW}.
\end{proof}

\subsection{Integration on \texorpdfstring{$\OI (M)$}{the Intersection de Rham Complex on M}}
\label{subsection:IntonOI}
In analogy to \cite[Lemma 7.1,Cor. 7.2]{BandR} we have
\begin{lemma}
	Integration defines bilinear forms
	\[
		\int : \Omega I_{\pp}^r (M) \times \Omega I_{\qq}^{n-r} (M) \to \R.
	\]
\end{lemma}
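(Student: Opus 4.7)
The plan is to observe that the lemma is immediate from the inclusions $\OI (M) \subset \wOI (M) \subset \Omega^\bullet (M)$, so that any $\omega \in \Omega I_{\pp}^r (M)$ and $\eta \in \Omega I_{\qq}^{n-r} (M)$ are genuine smooth differential forms on $M$. Since the degrees add up to $n = \dim M$, their wedge product $\omega \wedge \eta \in \Omega^n (M)$ is a smooth top-degree form on the compact oriented $\langle 2 \rangle$-manifold $M$. Therefore $\int_M \omega \wedge \eta$ is a well-defined real number by the standard theory of integration of top-degree forms on compact oriented manifolds with corners (as used earlier in the paper, cf.\ the proof of Stokes in \cite[Theorem 16.25]{Lee}).

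Bilinearity of the assignment $(\omega, \eta) \mapsto \int_M \omega \wedge \eta$ follows at once from the bilinearity of the wedge product and the linearity of the integral. The compactness of $M$ (which itself follows from the compactness of $X$ together with the fact that $M$ is obtained from $X$ by removing open tubular neighbourhoods of the singular strata) ensures finiteness, exactly as in the analogous statements already established for $\Omega_{E\MS}^\bullet (M)$ and $\wOI (M)$ preceding Theorem \ref{prop:PDonwOI}.

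There is no genuine obstacle to this lemma: the subcomplex $\OI (M)$ inherits everything needed from $\Omega^\bullet (M)$, and the argument is literally the same as in the corresponding lemma for $\OEMS (M)$ given earlier. The substantive work lies in the subsequent step, namely checking that this pairing descends to a well-defined bilinear form on cohomology $HI_{\pp}^r (X) \times HI_{\qq}^{n-r} (X) \to \R$, which will require invoking Stokes' theorem together with the collar-vanishing properties built into $\OI (M)$, and ultimately in proving the nondegeneracy stated in Theorem \ref{thm:PD_OI_isolated}; but that is not asserted by the present lemma.
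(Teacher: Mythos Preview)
Your argument is correct and matches the paper's treatment: the paper simply notes that this lemma holds ``in analogy to \cite[Lemma 7.1, Cor. 7.2]{BandR}'', i.e.\ bilinearity is obvious and finiteness follows from the compactness of $M$, exactly as you write. Your additional remarks about what remains to be done (descent to cohomology and nondegeneracy) are also accurate but, as you note, not part of this lemma.
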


The following lemma is the extension of \cite[Lemma 7.4]{BandR} to the 3-strata case:
\begin{lemma} \label{lemma:intexact}
	Let \( \omega \in \Omega I_{\pp}^{r-1} (M), ~ \eta \in \Omega I_{\qq}^{n-r} (M), \) Then 
	\[
		\int_{M} d ( \omega \wedge \eta ) = 0.
	\]
\end{lemma}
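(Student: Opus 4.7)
The plan is to apply the manifold-with-corners Stokes theorem, decompose the resulting boundary integral along the two faces of $\partial M = E \cup_{\partial E = \partial W} W$, and show that both face contributions vanish by degree bookkeeping with the cotruncation conditions built into $\OI$. This parallels the proof of Lemma \ref{lemma:tl2}, but here both factors carry a cotruncation condition, so no appeal to a relative complex is needed.

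First I would invoke \cite[Theorem 16.25]{Lee} to write
\[
\int_M d(\omega \wedge \eta) \;=\; \int_E j_E^*(\omega \wedge \eta) \;+\; \int_W j_W^*(\omega \wedge \eta),
\]
where the codimension-two corner $\partial E = \partial W$ is absorbed into a measure-zero set of $\partial M$.

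For the $E$-piece: since $\OI (M) \subset \wOI (M)$, the pullbacks $\omega_0 := j_E^* \omega \in ft_{\geq K} \Omega_{\MS}^{r-1}(B)$ and $\eta_0 := j_E^* \eta \in ft_{\geq K^*} \Omega_{\MS}^{n-r}(B)$ are fiberwisely cotruncated multiplicatively structured forms on $E$. On any trivializing patch $p^{-1}(U_\alpha) \cong U_\alpha \times L$, the fiber-direction factors of the summands of $\omega_0 \wedge \eta_0$ have total degree at least
\[
K + K^* \;=\; 2m - \bigl( \pp(m+1) + \qq(m+1) \bigr) \;=\; m+1 \;>\; m \;=\; \dim L,
\]
so they vanish pointwise on each local factor. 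Hence $\omega_0 \wedge \eta_0 = 0$ identically, and $\int_E j_E^*(\omega \wedge \eta) = 0$. This is the degree-count underlying Lemma \ref{lemma:tl1}, now available without any relative hypothesis because both factors are cotruncated.

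For the $W$-piece: the definition of $\OI$ provides $\omega_W := j_W^* \omega \in \tau_{\geq L} \Omega I_{\pp}^{r-1}(W)$ and $\eta_W := j_W^* \eta \in \tau_{\geq L^*} \Omega I_{\qq}^{n-r}(W)$. Were both nonzero I would need $r - 1 \geq L$ and $n - r \geq L^*$ simultaneously; summing contradicts
\[
L + L^* \;=\; 2(n-1) - \bigl(\pp(n) + \qq(n)\bigr) \;=\; n.
\]
So $\omega_W \wedge \eta_W = 0$, and $\int_W j_W^*(\omega \wedge \eta) = 0$.

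I expect the only mildly delicate point to be justifying that the Stokes boundary integral splits cleanly as a sum over the two faces of the $\langle 2 \rangle$-manifold $M$ despite the corner, which is routine and already exploited in the proof of Lemma \ref{lemma:tl2}. Beyond that, the argument is pure degree bookkeeping anchored on the complementarity relations $K + K^* = m + 1$ and $L + L^* = n$.
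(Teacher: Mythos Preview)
Your proof is correct and follows essentially the same approach as the paper: apply Stokes on the $\langle 2 \rangle$-manifold, kill the $E$-contribution by the fiber-degree count $K+K^*=m+1>\dim L$ (which is exactly the content of Lemma~\ref{lemma:tl1}, and as you correctly observe does not actually require the relative hypothesis), and kill the $W$-contribution by the incompatibility $L+L^*=n>n-1$ forcing one of the two cotruncated pullbacks to vanish. The paper phrases the $W$-step as a case split on $r-1\geq L$ versus $r-1<L$ rather than as a contradiction, but the substance is identical.
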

\begin{proof} By Stokes' Theorem on manifolds with corners we get:
\[
	\int_M d (\omega \wedge \eta ) = \int_W  j_W^* (\omega \wedge \eta) + \underbrace{\int_E j_E^* (\omega \wedge \eta)}_{= 0 ~ \text{by Lemma \ref{lemma:tl1}}}.
\]
By definition of \( \OI (M) \) and $\OIq (M)$, we have $j_W^* \omega \in \tg \OI (W)$ and $ j_W^* \eta \in \tgq \OIq (W).$
For \( r-1 \geq L = n-1- \pp (n-1) = 2 + \qq (n-1) \) we get \( n-r \leq n-3-\qq (n-1) < n-1 - \qq (n-1) = L^* \) and hence
$j_W^* \eta \in \tgq \Omega I_{\qq}^{n-r} (W) = \left\{ 0 \right\}.$
For \( r-1 < L \) we have $\omega \in \tg \Omega I_{\pp}^{r-1} (W) = \left\{ 0 \right\}.$ 
In both cases \( \int_W j_W^* (\omega \wedge \eta) = \int_W j_W^* \omega \wedge j_W^* \eta = 0\).
\end{proof}
With the help of the previous lemma we get:
\begin{prop}
	Integration induces bilinear forms
	\[ 
			\int : HI_{\pp}^r (M) \times HI_{\qq}^{n-r} (M)  \to \R,
				\bigl ( [\omega], [\eta] \bigr )  \mapsto \int_M \omega \wedge \eta.
\]
\end{prop}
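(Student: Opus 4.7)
The plan is to deduce this proposition directly from Lemma \ref{lemma:intexact} together with the Leibniz rule. Bilinearity of the pairing on cohomology is inherited from the chain-level bilinearity already established in the preceding lemma, so the sole content of the statement is that the value $\int_M \omega \wedge \eta$ depends only on the cohomology classes $[\omega] \in HI_{\pp}^r(M)$ and $[\eta] \in HI_{\qq}^{n-r}(M)$ and not on the chosen cocycle representatives.

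To show this invariance, I would pick cocycles $\omega \in \Omega I_{\pp}^r(M)$, $\eta \in \Omega I_{\qq}^{n-r}(M)$ and cohomologous representatives $\omega + d\widetilde{\omega}$, $\eta + d\widetilde{\eta}$ with $\widetilde{\omega} \in \Omega I_{\pp}^{r-1}(M)$, $\widetilde{\eta} \in \Omega I_{\qq}^{n-r-1}(M)$. Expanding the product produces $\int_M \omega \wedge \eta$ plus the three cross terms
\[
  \int_M d\widetilde{\omega} \wedge \eta, \quad \int_M \omega \wedge d\widetilde{\eta}, \quad \int_M d\widetilde{\omega} \wedge d\widetilde{\eta}.
\]
Using $d\eta = 0$ and the Leibniz rule, the first term equals $\int_M d(\widetilde{\omega} \wedge \eta)$, which vanishes by Lemma \ref{lemma:intexact} applied to $\widetilde{\omega} \in \Omega I_{\pp}^{r-1}(M)$ and $\eta \in \Omega I_{\qq}^{n-r}(M)$. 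Symmetrically, $d\omega = 0$ rewrites the second term as $\pm \int_M d(\omega \wedge \widetilde{\eta})$, handled by Lemma \ref{lemma:intexact} with shifted degree index $r+1$. For the third term, $d(d\widetilde{\eta}) = 0$ gives $d\widetilde{\omega} \wedge d\widetilde{\eta} = d(\widetilde{\omega} \wedge d\widetilde{\eta})$, and since the intersection complex $\OIq(M)$ is closed under $d$, the form $d\widetilde{\eta} \in \Omega I_{\qq}^{n-r}(M)$ may play the role of $\eta$ in Lemma \ref{lemma:intexact}.

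I do not anticipate any substantive obstacle here: the argument is a formal consequence of Lemma \ref{lemma:intexact} and the Leibniz rule, provided one tracks the degrees carefully to verify the hypotheses of Lemma \ref{lemma:intexact} in each of the three applications. The only point that warrants a brief remark in the write-up is that all auxiliary forms $\widetilde{\omega}$, $\widetilde{\eta}$, $d\widetilde{\eta}$ lie in the appropriate intersection complexes $\OI(M)$ or $\OIq(M)$, which is immediate since $\OI(M)$ and $\OIq(M)$ are cochain subcomplexes of $\Omega^\bullet(M)$ by construction.
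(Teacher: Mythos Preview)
Your proposal is correct and follows essentially the same approach as the paper: both deduce well-definedness on cohomology directly from Lemma \ref{lemma:intexact} and the Leibniz rule. The paper's version is marginally more streamlined in that it verifies independence of each slot separately (two applications of the lemma) rather than expanding both slots simultaneously, which spares you the third cross term; but this is a cosmetic difference, not a substantive one.
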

\begin{proof}
Let \( \omega \in \Omega I_{\pp}^r (M) \) and \( \eta \in \Omega I_{\qq}^{n-r} (M) \) be closed forms and \( \omega' \in \Omega I_{\pp}^{r-1} (M) \), \( \eta' \in \Omega I_{\qq}^{n-r-1} (M) \) be any forms. We then apply Lemma \ref{lemma:intexact} to deduce the following.
\begin{align*} 
	\int_M (\omega + d \omega ' ) \wedge \eta & = \int_M \omega \wedge \eta + \int_M \underbrace{d \omega' \wedge \eta}_{= d \left( \omega' \wedge \eta \right)} = \int_M \omega \wedge \eta \quad \text{as well as}\\		\int_M \omega  \wedge ( \eta + d \eta')  & = \int_M \omega \wedge \eta + \int_M  \underbrace{\omega \wedge d \eta'}_{= d\left( \omega \wedge \eta' \right)} = \int_M \omega \wedge \eta.
\end{align*}
\end{proof}

\subsection{Poincar\'e Duality for the Intersection de Rham Complex}
Finally we can state and prove the Poincar\'e duality theorem for \( \OI (M) \):
\begin{thm} (Poincar\'e duality for \( HI \)) \label{thm:PD_OI_isolated} \\
Let $X$ be a compact, oriented Thom-Mather stratified pseudomanifold with filtration $X = X_n \supset X_b \supset \left\{ x_0 \right\}$ and geometrically flat link bundle for the intermediate stratum. Let $M$ be the blowup of $X$ and let $\pp$ and $\qq$ be two complementary perversities. 
 Then, integration induces nondegenerate bilinear forms
\[
		\int : HI_{\pp}^r (M) \times HI_{\qq}^{n-r} (M)  \to \R, \quad
		\bigl ( [ \omega ], [ \eta ] \bigr )  \mapsto \int_M \omega \wedge \eta.
\]
\end{thm}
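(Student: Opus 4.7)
The plan is to run a five-lemma argument parallel to the one proving Theorem~\ref{prop:PDonwOI}, with the two short exact sequences from Lemma~\ref{lemma:disttriOI1} now playing the role that the two distinguished triangles did there. The second sequence of Lemma~\ref{lemma:disttriOI1} in perversity $\pp$ yields the long exact cohomology sequence
\[
\ldots \to H^{r-1}(\ts \OI(W)) \to HI_{\pp}^r(M) \to \widetilde{HI}_{\pp}^r(M) \to H^r(\ts \OI(W)) \to \ldots,
\]
while the first sequence in complementary perversity $\qq$ gives
\[
\ldots \to H^{n-r-1}(\tgq \OIq(W)) \to \widetilde{HI}_{\qq}^{n-r}(M,C_W) \to HI_{\qq}^{n-r}(M) \to H^{n-r}(\tgq \OIq(W)) \to \ldots.
\]

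Dualizing the $\qq$-sequence and joining both rows via the integration pairings of Section~\ref{subsection:IntonOI}, Theorem~\ref{prop:PDonwOI}, and Lemma~\ref{lemma:PDtrHIW}, one obtains a five-term ladder centered on the map $HI_{\pp}^r(M) \to HI_{\qq}^{n-r}(M)^\dagger$ induced by the pairing of the theorem. Four of the five vertical maps are isomorphisms for free: the two involving $\widetilde{HI}$ (in degrees $r-1$ and $r$) by Theorem~\ref{prop:PDonwOI}, and the two involving $H^\bullet(\ts \OI(W))$ by Lemma~\ref{lemma:PDtrHIW}. Granted that the ladder commutes up to sign, the five-lemma forces the central map to be an isomorphism; since all cohomology groups in play are finite-dimensional, this is equivalent to nondegeneracy of the bilinear form.

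The main obstacle is precisely the sign-commutativity of the ladder. Squares built from the natural subcomplex inclusions and quotient projections commute on the nose, so the substantive work lies in the two squares involving connecting homomorphisms. These I would analyze in close analogy to the squares (TS) and (BS) in the proof of Theorem~\ref{prop:PDonwOI}. For a closed $\omega$ representing a class in $H^{r-1}(\ts \OI(W))$, the connecting homomorphism $\delta_{\pp}$ is described as $\delta_{\pp}[\omega] = [-d\xi]$ for some lift $\xi \in \wOI(M)$ with $j_W^*\xi - \omega \in \tg \OI(W)$, and dually the connecting map from the $\qq$-sequence is described by a lift $\bar\eta \in \wOIq(M,C_W)$. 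Given a closed $\eta \in \Omega I_{\qq}^{n-r}(M)$, one applies Stokes' Theorem for $\langle 2 \rangle$-manifolds to $\int_M d(\xi \wedge \eta)$: the boundary contribution over $E$ vanishes by Lemma~\ref{lemma:tl1}, the inner collar boundary pieces vanish for dimensional or pullback reasons, and the remaining $W$-boundary integral reduces to the desired pairing $\int_W \omega \wedge j_W^*\eta$ between $\ts \OI(W)$ and $\tgq \OIq(W)$.

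Since all the essential tools, namely bump-function collar extensions, Stokes on $\langle 2 \rangle$-manifolds, and the boundary-vanishing Lemma~\ref{lemma:tl1}, are already in place from the proof of Theorem~\ref{prop:PDonwOI}, the sign-commutativity verification should carry over without introducing new conceptual difficulties, and the theorem will follow from the five-lemma.
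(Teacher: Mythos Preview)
Your proposal is correct and follows essentially the same approach as the paper: the five-lemma ladder built from the two short exact sequences of Lemma~\ref{lemma:disttriOI1}, with the outer isomorphisms supplied by Theorem~\ref{prop:PDonwOI} and Lemma~\ref{lemma:PDtrHIW}, and the commutativity of the connecting-homomorphism squares checked via Stokes on $\langle 2\rangle$-manifolds together with Lemma~\ref{lemma:tl1}. Two minor remarks: the zig-zag sign is $\delta[\omega]=[d\xi]$ rather than $[-d\xi]$ (harmless, since you work up to sign), and there is no need to cut along an inner collar here---unlike in the proof of Theorem~\ref{prop:PDonwOI}, a direct application of Stokes to $\int_M d(\xi\wedge\eta)$ suffices, with the $W$-contribution reduced via $j_W^*\xi=\omega+\alpha$, $\alpha\in\tg\OI(W)$, and the vanishing argument of Lemma~\ref{lemma:intexact}.
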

\begin{proof}
The two short exact sequences
of Lemma \ref{lemma:disttriOI1} induce long exact sequences on cohomology that fit into a diagram of the following form.
\begin{equation} \begin{tikzcd}
		\vdots \ar{d} 							& 		\vdots \ar{d} 		\\
		H^{r-1} \bigl ( \ts \OI (W) \bigr ) \ar{d}{\delta} \ar{r}{\int} & H^{n-r} \bigl ( \tgq \OIq (W) \bigr )^\dagger \ar{d}{(\sigma_W^* \circ c_W^*)^\dagger} \\
		HI^r_{\pp} (M) \ar{d} \ar{r}{\int} 				& 	HI_{\qq}^{n-r} (M)^\dagger \ar{d} \\
		\widetilde{HI}_{\pp}^r (M) \ar{d}{\proj^* \circ j_W^*} \ar{r}{\int} 	& \widetilde{HI}_{\qq}^{n-r} (M,C_W)^\dagger \ar{d}{\Delta^\dagger} \\
		H^r \bigl ( \ts \OI (W) \bigr ) \ar{d} \ar{r}{\int} 		& H^{n-r-1} \bigl ( \tgq \OIq (W) \bigr )^\dagger \ar{d} \\
		\vdots 								& 		\vdots
\end{tikzcd}
\label{eq:commdiagrPDHI}
\end{equation}
We claim that this diagram commutes up to sign.
To show this, we prove step by step that the individual squares in the diagram commute.
We start with the top square (TS) and first describe the connecting homomorphism 
\[ \delta : H^{r-1} \bigl ( \ts \OI (W) \bigr ) \to HI_{\pp}^r (M). \] 
Let \( \omega \in \ts \Omega_{\pp}^{r-1} (W) \) be a closed form and $\proj: \OI (W) = \ts \OI (W) \oplus \tg \OI (W) \to \ts \OI (W)$ the projection to the truncated subcomplex. Then there is a form $\kappa	\in \wOI (M)$ with $\proj (j_W^* \kappa) = \omega.$ We then have, by the standard Zig-zag argument:
\[
	\delta \bigl ( [ \omega ] \bigr ) = [ d \kappa ] \in HI_{\pp}^r (X).
\]
To show that (TS) commutes up to sign, we must show that for \( \omega \in \ts \Omega_{\pp}^{r-1} (W) \) closed and \( \eta \in \Omega I_{\qq}^{n-r} (M) \) closed it holds that
\begin{equation} 
	\int_W \omega \wedge j_W^* (\eta) = \pm \int_M  d \kappa \wedge \eta.
\label{eq:CTS}
\end{equation}
Since \( d \eta = 0 \), \( d \kappa \wedge \eta = d (\kappa \wedge \eta) \) and hence by Stokes' Theorem for manifolds with corners
\[
	\int_M (d \kappa) \wedge \eta = \int_W j_W^* (\kappa \wedge \eta) + \int_E j_E^* (\kappa \wedge \eta).
\]
Since \( \kappa \in \widetilde{\Omega I}_{\pp}^{r-1} (M) \) and \( \eta \in \Omega I_{\qq}^{n-r} (M) \), it holds that
\[
	j_E^* \kappa \in ft_{\geq K} \Omega_{\MS}^r (B) \quad \text{and} \quad j_E^* \eta \in  ft_{\geq K^*} \Omega_{\MS}^{n-r} (B) 
\]
and hence \ref{lemma:tl1} implies that
\[
	\int_E j_E^* (\kappa \wedge \eta) = 0.
\]
What remains is to calculate the integral \( \int_W j_W^* (\kappa \wedge \eta) \):\\
By the definition of $\kappa$, there is a form \( \alpha \in \tg \Omega I_{\pp}^{r-1} (W) \) such that
$ j_W^* (\kappa) = \omega + \alpha$ and hence we can rewrite the integral over $W$ as follows.
\[
	\int_W j_W^* (\kappa \wedge \eta) = \int_W \omega \wedge j_W^* \eta + \int_W \alpha \wedge j_W^* \eta = \int_W \omega \wedge j_W^* \eta
\]
since \( \alpha \in \tg \Omega I_{\pp}^{r-1} (W) \) and \( j_W^* \eta \in \tg \Omega I_{\qq}^{n-r} (W) \) and hence
\[
	\int_W \alpha \wedge j_W^* \eta = 0,
\]
by the same arguments as in the proof of Lemma \ref{lemma:intexact}. Summing up, we have shown that (TS) commutes up to sign.

\vspace{0.2cm}
To prove the commutativity of the bottom square (BS) in (\ref{eq:commdiagrPDHI}), again up to sign, note that
\[
	\Delta: H^{n-r-1} \bigl ( \tgq \OIq (W) \bigr ) \to \widetilde{HI}_{\qq}^{n-r} (M,C_W) 
\] 
is the standard connection homomorphism that belongs to a long exact cohomology sequence induced by a short exact sequence. 
For \( \eta \in \tgq \Omega I_{\qq}^{n-r-1} (W) \) closed, let $\omega \in \widetilde{\Omega I}_{\qq}^{n-r-1} (M)$ be a form with $j_{W}^* \omega = \eta.$ Then, $d \omega \in \widetilde{\Omega I}_{\qq}^{n-r} (M,C_W)$ and $\Delta [\eta] = [d\omega].$
We have to show that for \( \eta \in \tgq \Omega I_{\qq}^{n-r-1} (W) \) and \( \theta \in \widetilde{\Omega I}_{\pp}^r (M) \) closed, with \( \Delta [\eta] = [d \omega] \) as above, it holds that
\[
	\int_M \theta \wedge (d \omega ) = \pm \int_W j_W^* \theta \wedge \eta.
\]
By Stokes' Theorem on manifolds with corners, we get
\[ 
		\pm \int_M \theta \wedge (d \omega) =  \int_M d (\theta \wedge \omega ) = \int_E j_E^* (\theta \wedge \omega) + \int_W j_W^* (\theta \wedge \omega).
	\]
	By definition, \( j_E^* \theta \in ft_{\geq K} \Omega_{\MS}^r (B) \), \( j_E^* \omega \in ft_{\geq K^*} \Omega_{\MS}^{n-r-1} (B). \) Hence, by Lemma \ref{lemma:tl1},
	\[ \int_E j_E^* (\theta \wedge \omega) = \int_E j_E^* \theta \wedge j_E^* \omega = 0. \]
	This implies that
	\[
	\pm \int_M \theta \wedge d \omega = \int_W j_W^* (\theta \wedge \omega) = \int_W j_W^* \theta \wedge \eta.
	\]

	\vspace{0.2cm}
	The middle square in (\ref{eq:commdiagrPDHI}) commutes, since the vertical maps are just inclusions and the horizontal maps both integration of wedge products of two forms.

	\vspace{0.2cm}
	The commutativity of the diagram (\ref{eq:commdiagrPDHI}) (up to sign) together with the fact that the map
	\[
		\int : H^r \bigl ( \ts \OI (W) \bigr ) \to H^{n-r-1} \bigl( \tgq \OIq (W) \bigr)^\dagger 
	\]
	is an isomorphism for all \( r \in \mathbb{Z} \) by Lemma \ref{lemma:PDtrHIW}
	as well as the map
	\[
		\int : \widetilde{HI}_{\pp}^r (M) \to \widetilde{HI}_{\qq}^{n-r} (M,C_W)^\dagger
	\]
	by Proposition \ref{prop:PDonwOI} then enables us to apply the 5-Lemma to conclude the statement of the theorem.
	\end{proof}

%%Part 6: Examples
\section{Examples} 
We want to give a class of examples of depth two pseudomanifolds, we can apply the intersection space cohomology theory to. We make use of the relation between Thom-Mather stratified pseudomanifolds and compact manifolds with corners and iterated fibration structures, which is described in \cite[Section 2]{SignaturePackageWitt}. In our setting, this is the correspondence between the described depth two pseudomanifolds and $\langle 2 \rangle$-manifolds with one boundary component fibered by a geometrically flat fiber bundle.

We consider flat principal $G$-bundles, with $G$ a compact connected Lie group. 
This example is based on ideas of Laures, see \cite%[Example 2.1.2]
{Laures}. 

\subsection{Fiberwise Truncation on Principal Bundles}
Before we introduce the examples, we calculate the cohomology of fiberwisely (co)truncated multiplicatively structured forms on flat principal $G$-bundles $p: E \rightarrow B$, with $G$ a compact connected Lie group. 
There are horizontal sections $s: U \rightarrow p^{-1} \left( U \right)$ that induce a trivialization of the bundle with locally constant transition maps by defining local trivializations
\begin{align*}
\phi_U^{-1}: U \times G & \rightarrow p^{-1} (U), \\
(x,g) & \mapsto s(x) \cdot g.
\end{align*}
Byun and Kim prove in \cite[Section 6]{ByunKim} that the Leray-Hirsch Theorem is applicable to flat principal bundles and hence $H^\bullet (E) \cong H^\bullet (B) \otimes H^\bullet (G)$ as algebras. They do so by constructing, for any flat connection $A$ on $E$, an algebra homomorphism $E_A: H_{dR}^\bullet (G) \rightarrow H_{dR}^\bullet (E)$ satisfying $\iota_y^* E_A = \id$ for any inclusion $\iota_y: G \rightarrow p^{-1} \left( p(g) \right), ~ g \mapsto yg.$ The cohomology morphism $E_A$ is induced by a map 
\[
E_A: \mathcal{H}^\bullet (G) \rightarrow \Omega^\bullet (E),
\]
with $\mathcal{H}^\bullet (G) = \left\{ \alpha \in \Omega^\bullet (G): ~ L_g^* \, \alpha = R_g^* \, \alpha = \alpha ~ \forall g \in G \right\}$ the complex of bi-in\-variant forms on $G$, which is isomorphic to $H^\bullet_{dR} (G)$ by \cite[Theorem 12.1]{ChevEilenberg}. If the connection is flat, $E_A (\alpha)$ is a closed form for any $\alpha \in \mathcal{H}^\bullet (G).$
Locally, $E_A(\alpha)$ is given as $E_A(\alpha)|_{p^{-1}(U)} = \kappa_s^* \alpha,$ with $\kappa_s: p^{-1} (U) \rightarrow G$ defined by $y = s\left( p(y) \right) \cdot \kappa_s (y)$ for a horizontal section $s$ on $U$. 
This implies, that $E_A(\alpha)$ is actually a multiplicative form. For let $U$ be any coordinate chart. Then $\kappa_s \circ \phi_U^{-1} (x,g) = \kappa_s \left( s(x) \cdot g \right) = g,$ by the definition of $\kappa_s$. This means that locally $E_A (\alpha)|_{p^{-1} (U)} = \phi_U^* \pi_2^* \alpha$ and hence, $E_A (\alpha)$ is multiplicative.

We want to characterize the cohomology of the complexes of multiplicatively structured truncated respectively cotruncated forms using the complex of bi-invariant forms. To do so, we choose a bi-invariant Riemannian metric $g$ on $G$. That is, a Riemannian metric such that left translations $L_a$ and right translations $R_a$ are isometries for all $a \in G.$ Since $G$ is a compact Lie group, such a bi-invariant Riemannian metric always exists, see \cite[Corollary 1.4]{MilnorCurvature}. 

\begin{prop} \label{prop:TruncCohomPrincBund}
Let $g$ be a bi-invariant Riemannian metric on the compact Lie group $G$ and take the naive truncation and cotruncation of bi-in\-variant forms $\mathcal{H}^r_{<K} (G) = \mathcal{H}^r (G) $ for $r <K$ and zero otherwise and $\mathcal{H}^r_{\geq K} (G) = \mathcal{H}^r (G)$ for $r \geq K$ and zero otherwise.
Then, there are isomorphisms
\[ H^\bullet_{dR} (B) \otimes \mathcal{H}_{<K}^\bullet (G)  \xrightarrow{\cong} H^\bullet \left( \ftsOMS (E)\right) \]
and
\[ H^\bullet_{dR} (B) \otimes \mathcal{H}_{\geq K}^\bullet (G) \xrightarrow{\cong} H^\bullet \left( \ftgOMS (E)\right), \]
defined by 
\[ \left( [\omega], \alpha \right) \mapsto \left[ p^* \omega \wedge E_A^* (\alpha) \right].
\]
\end{prop}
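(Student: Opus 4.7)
The plan is to mirror Banagl's proof of \cite[Lemma 6.7]{BandR}, with the bi-invariant forms $\mathcal{H}^\bullet(G)$ on the fiber taking over the role played there by harmonic forms on a closed Riemannian link. Since the bi-invariant metric $g$ makes $\mathcal{H}^\bullet(G)$ coincide with the space of harmonic forms on $G$, and since bi-invariant forms on a compact connected Lie group are closed (the Chevalley--Eilenberg differential on $\mathcal{H}^\bullet(G)$ vanishes), the inclusions $\mathcal{H}^\bullet_{<K}(G) \hookrightarrow \tau_{<K}\Omega^\bullet(G)$ and $\mathcal{H}^\bullet_{\geq K}(G) \hookrightarrow \tau_{\geq K}\Omega^\bullet(G)$ into the Hodge-theoretic (co)truncations of Definition \ref{def:fiberwisetrun} are quasi-isomorphisms. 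I first check that the map is well defined at the cochain level: for $\omega \in \Omega^\bullet(B)$ closed and $\alpha \in \mathcal{H}^\bullet(G)$ of pure degree, the form $p^*\omega \wedge E_A(\alpha)$ is closed because $A$ is flat, and the identity $\kappa_s \circ \phi_U^{-1}(x,g) = g$ valid for any horizontal section $s$ yields the local formula $E_A(\alpha)|_{p^{-1}(U)} = \phi_U^* \pi_2^* \alpha$. Hence $p^*\omega \wedge E_A(\alpha)$ is multiplicatively structured of the correct (co)truncation degree, lying in $\ftsOMS(E)$ or $\ftgOMS(E)$ according to whether $\deg\alpha < K$ or $\deg\alpha \geq K$.

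For the local computation I take a trivializing chart $U \subset B$ diffeomorphic to $\R^b$ or $\R^b_+$. Via $\phi_U^*$, the complex $\ftsOMS(U)$ is identified with the tensor product $\Omega^\bullet(U) \otimes_\R \tau_{<K}\Omega^\bullet(G)$ of cochain complexes. Combining the fiberwise Poincar\'e lemma (Lemma \ref{lemma:PLfibtr}) with the quasi-isomorphism $\mathcal{H}^\bullet_{<K}(G) \hookrightarrow \tau_{<K}\Omega^\bullet(G)$ and a K\"unneth argument yields $H^\bullet(\ftsOMS(U)) \cong H^\bullet_{dR}(U) \otimes \mathcal{H}^\bullet_{<K}(G)$, and one verifies that this isomorphism is realized by the proposed map. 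The cotruncated case is entirely analogous.

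Globalization then follows by Mayer--Vietoris over $B$ along a finite good open cover by trivializing charts, together with a bootstrap principle in the spirit of Lemma \ref{lemma:bootsmapMS}. Since $([\omega],\alpha) \mapsto [p^*\omega \wedge E_A(\alpha)]$ is natural with respect to restriction, it commutes with the Mayer--Vietoris sequences, and the 5-Lemma propagates the local isomorphism to all of $B$. The main obstacle is the compatibility of the local formula $E_A(\alpha)|_{p^{-1}(U)} = \phi_U^* \pi_2^* \alpha$ with changes of horizontal section: transition functions between two horizontal trivializations of a flat principal bundle are locally constant elements of $G$ acting on the fiber by (left) translation, and it is precisely the bi-invariance of $\alpha$ that makes the local formulas glue to the well-defined global form $E_A(\alpha)$ and that ensures the candidate map is compatible with the Mayer--Vietoris arrows. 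Once this compatibility is established, the remaining steps are routine analogues of the arguments in \cite{BandR}.
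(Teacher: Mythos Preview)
Your proposal is correct and follows essentially the same route as the paper's proof: both verify that the local formula $E_A(\alpha)|_{p^{-1}(U)} = \phi_U^*\pi_2^*\alpha$ makes $p^*\omega \wedge E_A(\alpha)$ multiplicatively structured of the right (co)truncation type (using that bi-invariant forms are harmonic for a bi-invariant metric), and then globalize by the Mayer--Vietoris/Leray--Hirsch argument together with the fiberwise Poincar\'e lemmas of \cite{BandR}. The paper compresses the last step into a single sentence, whereas you spell out the local K\"unneth identification and the bootstrap explicitly; the content is the same.
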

\begin{proof}
The product $p^* \omega \wedge E_A^* (\alpha)$ is always multiplicative since $E_A (\alpha) $ is multiplicative and $p|_U = \pi_1 \circ \phi_U$ for each coordinate chart $U.$ It satisfies the following formula.
\[ \left( p^* \omega \wedge E_A^* (\alpha) \right)|_{p^{-1} (U)} = 
 \phi_U^* \left( \pi_1^* \omega \wedge \pi_2^* \alpha \right). \]

This relation implies that the assignment $\left( [\omega], \alpha \right) \mapsto \left[ p^* \omega \wedge E_A^* (\alpha) \right]$ defines maps as in the statement of the proposition.
This is true because $\mathcal{H}_{<K}^\bullet \subset \tau_{<K} \Omega^\bullet (G)$ and $\mathcal{H}_{\geq K}^\bullet \subset \tau_{\geq K} \Omega^\bullet (G)$. The first inclusion is trivial. 
To see, that $\mathcal{H}_{\geq K}^\bullet $ is a subcomplex of $\tau_{\geq K} \Omega^\bullet (G)$, note, that the left and right translations $L_a, ~ R_a$ are isometries for all $a \in G$, since we work with a bi-invariant metric. Hence, the induced maps on differential forms commute up to sign with the Hodge-star operator, $L_a^* \circ * = \pm * \circ L_a^*,$ $R_a^* \circ * = \pm * \circ R_a^*.$ This implies, that for any $\omega \in \mathcal{H}^\bullet (G)$ it holds that $* \omega \in \mathcal{H}^{\bullet} (G).$ Therefore, since all forms in $\mathcal{H}^\bullet (G)$ are closed, see \cite[(12.3)]{ChevEilenberg}, they are also coclosed and hence harmonic. That implies that $\mathcal{H}^r (G) \subset \text{Harm}^r (G) \subset \ker d^*$ and therefore, it holds that $\mathcal{H}_{\geq K}^\bullet (G) \subset \tau_{\geq K} \Omega^\bullet (G).$

The same Mayer-Vietoris type argument which is used to prove the Leray-Hirsch Theorem (together with the Poincar\'e Lemmas for fiberwise (co)\-truncated multiplicatively structured forms of \cite{BandR}) then implies the statement of the proposition.
\end{proof}

\subsection{Principal bundles over \texorpdfstring{$\langle 2 \rangle$}{two}-manifolds}
Let $G$ be a compact Lie group and $Q$ a $\langle 2 \rangle$-manifold with boundary $\dQ = \dB \cup_{\partial (\dB)} \dB'$ such that there is a group represantation $\rho: \pi_1 (Q) \to G$ of the fundamental group of $Q$. Let $\pi: \widetilde{Q} \rightarrow Q$ denote the universal covering of $Q$. Then, the map $p: M_{\rho} := \widetilde{Q} \times_\rho G \rightarrow Q, ~ \left( x, g \right) \mapsto \pi \left( x \right),$ defines a flat principal $G$-bundle.
$M_\rho$ is a $\langle 2 \rangle$-manifold as well and can be interpreted as the blowup of a pseudmanifold with three strata. This pseudmanifold is constructed by first blowing down $p^{-1} \left( \dB \right)$ fiberwisely and then blowing down the boundary of the resulting pseudmanifold with boundary in the sense of \cite[Section 2]{SignaturePackageWitt}.
\begin{ex}
%We want to calculate the middle-perversity intersection space cohomology groups of the following space. 
We first construct a (high dimensional) orientable closed manifold $P$ with fundamental group $\pi_1 (P) = \Z/2\Z$. Let $n\geq 4, ~ k \geq 2 $ and set $N := n+k.$
Starting with the manifold $S^{1} \times S^{n-1},$ let $c$ be a closed embedded curve such that its homotopy class represents two times the generator of the fundamental group $\pi_1 \left( S^{1} \times S^{n-1} \right).$ If we cut out a tubular neighbourhood $c \times D^{n-1}$ of $c$ and glue a disk to $c$, we get the space
\[
P = \left( (S^1 \times S^{n-1} ) \setminus (c \times D^{n-1}) \right) \cup_{\partial D = c} (D^2 \times S^{n-2}),
\]
which is a closed oriented $n$-dimensional manifold. By the Seifert-van Kampen theorem, the fundamental group of $P$ is $\pi_1 (P) = \Z/2\Z$ and, by a Mayer-Vietoris computation, $H_* (P; \R) = H_* (S^n; \R).$

The manifold $Q$ is then obtained by cutting an Euclidean ball $\mathcal{B}$ out of $P \times S^k, ~ Q := P \times S^k \setminus \mathcal{B}.$ Depending on the embedding of $\mathcal{B}$ in $P \times S^k$ and the associated collars, we can view $Q$ either as a manifold with boundary $\partial \mathcal{B} = S^{N-1}$ or as a $\langle 2 \rangle$-manifold with decomposed boundary $\partial \mathcal{B} = S^{N-1} = D_1^{N-1} \cup_{S^{N-2}} D_2^{N-1}$. The Seifert-van Kampen theorem implies, that the fundamental group of $Q$ still is $\pi_1 (Q) = \Z/2\Z,$ since $ \pi_1 (Q) \cong \pi_1 ( P \times S^k) \cong \pi_1 (P) \times \pi_1 (S^k) = \pi_1 (P) = \Z/2\Z.$ Hence, there is a group homomorphism $\rho: ~\pi_1 (Q) \cong \Z/2\Z \hookrightarrow S^1 \hookrightarrow S^1 \times S^1 = T^2,$ with $S^1  \hookrightarrow S^1 \times S^1$ any embedding, for example inclusion as first factor at some point in the second factor. Let $\pi: \widetilde{Q} \rightarrow Q$ be the universal covering of $Q$. The homomorphism $\rho$ defines a flat principal $T^2$-bundle
\[
p: M_\rho := \widetilde{Q} \times_{\rho} T^2 \rightarrow Q, ~ [\widetilde{x},t] \mapsto \pi \left( \widetilde{x} \right).
\]
The total space of this bundle can be seen as the blowup of a $2$-strata pseudomanifold with singular set $\dQ$ and link bundle $p|: p^{-1} (\dQ) \rightarrow \dQ.$ Alternatively, one can interpret $Q$ as the blowup of a $3$-strata pseudomanifold with filtration by singular sets $X \supset \dQ \cong S^{N-1} \supset \{ x_{0} \},$ with $x_0 \in S^{N-1}$ some point. 

We first calculate the intersection space cohomology of the associated $2-$strata pseudomanifold $Y$. We use the long exact cohomology sequence induced by Banagl's short exact sequence
\begin{equation}
\begin{tikzcd}[column sep=small]
0 \ar{r} & \Omega^\bullet \left(M_\rho, C_{\dM_\rho} \right) \ar{r}  & \OI (M_\rho) \ar{r}{j^*} & \ftgOMS \left( \dM_\rho \right) \ar{r} & 0 
\end{tikzcd}
\label{eq:ses_HI_2strata}
\end{equation}
where $\dM_\rho = p^{-1} \left( \dQ \right)$ and $C_{\dM_\rho}$ is a collar neighbourhood of the boundary (see \cite[(13)]{BandR}, where the sequence is stated as a distinguished triangle). Proposition \ref{prop:TruncCohomPrincBund} implies, that 
\begin{align*}
H^\bullet \left( \ftgOMS \left( \dM_\rho \right) \right) & \cong H^{\bullet} (\dQ) \otimes \mathcal{H}_{\geq K}^\bullet (T^2) \\
& \cong \mathcal{H}_{\geq K}^\bullet (T^2) \oplus \mathcal{H}_{\geq K}^\bullet (T^2) \, [1-N].
\end{align*}
By Poincar\'e-Lefschetz duality, we have $H^r (Q) \cong H_{N-r} \left( P \times S^k, \mathcal{B} \right).$ Since $\mathcal{B}$ is contractible, these are the reduced homology groups of $P \times S^k.$ By the K\"unneth Theorem, $\widetilde{H}_* (P \times S^k)$ is zero in all degrees but $n,k,N$ and isomorphic to $\R$ for $r =  k, n, N$ for $n \neq k$ while $H_n (P \times S^k) \cong \R \otimes \R$ if $n=k$.  Hence, the cohomology of the principal flat $T^2-$bundle $M_\rho$ satisfies the following relation.
\[
H^\bullet (M_\rho) \cong H^\bullet (T^2) \oplus H^\bullet (T^2) \, [-k] \oplus H^\bullet (T^2) \, [-n].
\]
The cohomology of the boundary satisfies the similar relation $H^\bullet \left( \dM_\rho \right) \cong H^\bullet (S^N) \otimes H^\bullet (T^2) \cong H^\bullet (T^2) \oplus H^\bullet (T^2) \, [1-N].$
The pullback to the boundary $j^*: \Omega^\bullet (M_\rho) \to \Omega^\bullet (\dM_\rho)$ induces a map on cohomology which maps the non-shifted comohomology $H^\bullet (T^2)$ to $H^\bullet (T^2)$ and is trivial in all other degrees, by dimensional reasons. Together with the long exact sequence of the pair $\left( M_\rho, \dM_\rho \right),$ this gives rise to the following formula.
\begin{align*}
H^\bullet \left( M_\rho, \dM_\rho \right) 
& \cong H^\bullet (T^2) \, [-k] \oplus H^\bullet (T^2) \, [-n] \oplus H^\bullet (T^2) \, [-N].
\end{align*}
Since there is an injection $H^\bullet \left( \ftgOMS ( \dM_\rho ) \right) \hookrightarrow H^\bullet ( \dM_\rho ),$ also by Proposition \ref{prop:TruncCohomPrincBund}, using the long exact sequence induced by (\ref{eq:ses_HI_2strata}) from above, the intersection space cohomology groups of $Y$ are given as
\begin{align*}
HI_{\pp}^r (M_\rho) & \cong H^r (M_\rho, \dM_\rho) \, / \, \delta \left(H^{r-1} (\ftgOMS (\dM_\rho)) \right) \\
& \quad \oplus \ker \left( \delta: H^{r} (\ftgOMS (\dM_\rho)) \to H^{r+1} (M_\rho, \dM_\rho) \right) \\
\Rightarrow ~ HI_{\pp}^\bullet (M_\rho) &\cong H^\bullet (T^2) \, [-k] \oplus H^\bullet (T^2) \, [-n] \oplus \mathcal{H}_{<K}^\bullet (T^2) \, [-N] \oplus \mathcal{H}_{\geq K}^\bullet (T^2),
\end{align*}
where $\delta: H^{\bullet} (\dM_\rho) \to H^{\bullet +1} (M_\rho, \dM_\rho)$ denotes the connecting homomorphism of the long exact cohomology sequence of the pair $(M_\rho, \dM_\rho)$.
Note, that the Poincar\'e duality isomorphism between complementary perversities interchanges the first two factors and the latter two, respectively.

\vspace{2ex}
Let us also calculate the intersection space cohomology groups of the associated $3-$strata pseudomanifold $X.$ The approach to calculate these is geared to our proof technique for the Poincar\'e duality theorem. In the notation from the previous sections, the manifold with boundary is $M = M_\rho,$ with boundary parts $E= p^{-1} (D_1)$ and $W = p^{-1} (D_2)$ and corner $\dE = \dW = p^{-1} (\partial D_1) = p^{-1} (S^{N-2})$. The base of the link bundle $p: E \to B$ is $B = D_1$ with boundary $\dB = S^{N-2}.$
The two cotruncation values that are needed on the respective boundary parts are $K := 2- \pp(3)$ and $L := N-1 - \pp (N)$.
We calculate the cohomology groups $\widetilde{HI}_{\pp} (M_\rho)$ first, using the following distinguished triangle. 
\[
\begin{tikzcd}
\Omega^\bullet (M, C_E) \ar{r} & \wOI (M) \ar{r} & \ftgOMS (B) \ar{r} & \Omega^\bullet (M, C_E)[+1]
\end{tikzcd}
\]
This distinguished triangle exists, since the kernel of the surjective pullback map $\wOI \to P_{\geq K}^\bullet (B)$, where the latter complex is defined in Definition \ref{def:Pcomplexes}, is the complex
\[
\left\{ \omega \in \Omega^\bullet (M, C_E) \; | \; c_{W}^* \omega = \pi_W^* \eta ~ \text{for some}~ \eta \in \Omega^\bullet (W) \right\}.
\]
This complex is quasi-isomorphic to $\Omega^\bullet (M, C_E)$ by the same arguments as in Lemma \ref{lemma:MSdc}. Since $P^\bullet_{\geq K} (B)$ is quasi-isomorphic to $\ftgOMS (B)$ by the referenced lemma, we have the above distinguished triangle.

The long exact sequence of the pair $(M,E) = \left( M_\rho, p^{-1} (D_1) \right)$, together with the previous calculations, implies that $ H^\bullet \left( M_\rho, p^{-1} (D_1) \right) \cong H^\bullet (T^2) \, [-k] \otimes H^\bullet (T^2) \, [-n].$ Since the $(n-1)$-dimensional disc $D_1$ is contractible, Proposition \ref{prop:TruncCohomPrincBund} gives $H^\bullet \left( \ftgOMS (p^{-1} (D_1) ) \right) \cong \mathcal{H}_{\geq K}^\bullet (T^2).$ 
Therefore, by dimensional reasons, the connecting homomorphism in the long exact cohomology sequence induced by the above triangle is trivial and hence the cohomology groups $\widetilde{HI}^\bullet_{\pp} (M_\rho)$ are a direct product,
\begin{align*}
\widetilde{HI}_{\pp}^\bullet (M_\rho) &\cong H^\bullet \left( M_\rho, p^{-1} (D_1) \right) \oplus H^\bullet \left( \ftgOMS \left( p^{-1} (D_1) \right) \right) \\
& \cong H^\bullet (T^2) \,[-k] \oplus H^\bullet (T^2) \,[-n] \oplus \mathcal{H}_{\geq K} (T^2).
\end{align*}
To calculate the intersection space cohomology groups $HI_{\pp}^\bullet (X),$ we use the second short exact sequence of Lemma \ref{lemma:disttriOI1}, which we recall below adapted to this setting.
\[
0 \rightarrow \OI (M_\rho) \xrightarrow{\iota} 	 \wOI (M_\rho) \xrightarrow{\proj \circ j^*} \ts \OI (p^{-1} (D_2)) \rightarrow 0
\]
 To calculate the groups $HI_{\pp}^\bullet \left( p^{-1} (D_2) \right),$ we use the same argumentation as in the first part of this example.  
\begin{align*} 
H^\bullet \left( p^{-1} (D_2), p^{-1} (S^{N-2}) \right) &\cong H^\bullet (T^2) \,[1-N] \quad \text{and} \\
H^\bullet \left( \ftgOMS \left( p^{-1} (S^{N-2} ) \right) \right) & \cong H^{\bullet} (S^{N-2}) \otimes \mathcal{H}_{\geq K}^\bullet (T^2).
\end{align*}
Together with the long exact cohomology sequence induced by (\ref{eq:ses_HI_2strata}), adapted to this setting, we get
\[
HI_{\pp}^\bullet \left( p^{-1} (D_2)\right) \cong \mathcal{H}_{\geq K}^\bullet (T^2) \oplus \mathcal{H}_{<K}^\bullet (T^2) \,[1-N].
\]
Since $L = N-1 - \pp(N) \leq N-1,$ cotruncating below that degree gives
\[
H^\bullet \left (\tau_{<L} \OI \left( p^{-1} (D_2) \right) \right)
\cong \mathcal{H}_{\geq K}^\bullet (T^2).
\]
Note, that this is the image of the factor $\mathcal{H}_{\geq K}^\bullet (T^2)$ in our depiction of $\widetilde{HI}_{\pp}^\bullet (M_\rho)$ under the cohomology map induced by the pullback $j^*: \wOI (M_\rho) \to \OI (p^{-1} (D_2))$ followed by the projection to the truncated complex. Hence, the map
\[ \proj^* \circ j^*: \widetilde{HI}_{\pp}^\bullet (M_\rho) \to H^\bullet \left( \tau_{<L} \OI (p^{-1} (D_2)) \right) \]
is surjective.
Therefore, the connecting homomorphism in the long exact cohomology sequence induced by the second short exact sequence of Lemma \ref{lemma:disttriOI1}, which we recalled above, is trivial and we get the following result.
\[
HI_{\pp}^\bullet (M_\rho) \cong \ker (\proj^* \circ j^*)
\cong  H^\bullet (T^2) \,[-k] \oplus H^\bullet (T^2) \,[-n],
\]
which does not depend on the chosen perversity. 
In particular, the groups are different from the ones calculated in the first part of the example.
The Poincar\'e duality isomorphism interchanges the two factors in the direct sum. 
\end{ex}

\bibliographystyle{alpha}
\bibliography{bibtex}
\end{document}